\newcommand\supp{\mathop{\rm supp}}
\newcommand\tr{\mathop{\rm tr}}
\DeclareMathOperator{\sgn}{sgn}
\DeclareMathOperator{\Hom}{Hom}
\newcommand{\bbD}{{\mathbb{D}}}
\newcommand{\bbN}{{\mathbb{N}}}
\newcommand{\bbR}{{\mathbb{R}}}
\newcommand{\bbP}{{\mathbb{P}}}
\newcommand{\bbZ}{{\mathbb{Z}}}
\newcommand{\bbC}{{\mathbb{C}}}
\newcommand{\bbT}{{\mathbb{T}}}
\newcommand{\cA}{{\mathcal{A}}}
\newcommand{\cB}{{\mathcal{B}}}
\newcommand{\cD}{{\mathcal{D}}}
\newcommand{\cJ}{{\mathcal{J}}}
\newcommand{\cM}{{\mathcal{M}}}
\newcommand{\cP}{{\mathcal{P}}}
\newcommand{\cQ}{{\mathcal{Q}}}
\newcommand{\cV}{{\mathcal{V}}}
\newcommand{\z}{\zeta}
\renewcommand{\Re}{\operatorname{Re}}
\theoremstyle{plain}
\newtheorem{thm}{Theorem}[section]
\newtheorem*{thm*}{Theorem}
\newtheorem{lem}[thm]{Lemma}
\newtheorem{prop}[thm]{Proposition}
\theoremstyle{definition}
\newtheorem{defn}{Definition}[section]
\newtheorem*{rem}{Remark}
\numberwithin{equation}{section}
\numberwithin{thm}{section}
\numberwithin{defn}{section}
\begin{document}

\title{Almost Periodicity in Time of Solutions of the Toda Lattice}

\author{Ilia Binder}

\address{Department of Mathematics, University of Toronto, Bahen Centre, 40 St. George St., Toronto, Ontario, CANADA M5S 2E4}

\email{ilia@math.toronto.edu}

\thanks{I.\ B.\ was supported in part by an NSERC Discovery grant.}

\author{David Damanik}

\address{Department of Mathematics, Rice University, Houston, TX 77005, U.S.A.}

\email{damanik@rice.edu}

\thanks{D.\ D.\ and T.\ V.\ were supported in part by NSF grant DMS--1361625.}

\author{Milivoje Lukic}

\address{Department of Mathematics, Rice University, Houston TX 77005, U.S.A.}

\email{milivoje.lukic@rice.edu}

\thanks{M.\ L.\ was supported in part by NSF grant DMS--1301582.}

\author{Tom VandenBoom}

\address{Department of Mathematics, Rice University, Houston, TX 77005, U.S.A.}

\email{tvandenboom@rice.edu}

\date{}

\begin{abstract}
We study an initial value problem for the Toda lattice with almost periodic initial data.  We consider initial data for which the associated Jacobi operator is absolutely continuous and has a spectrum satisfying a Craig-type condition, and show the boundedness and almost periodicity in time and space of solutions.
\end{abstract}

\maketitle

\tableofcontents

\section{Introduction}

This paper is devoted to studying solutions of the Toda lattice
\begin{align}
\label{eqn:todalattice1}
\frac{d}{dt}a_n(t) &= a_n(t)(b_{n+1}(t) - b_n(t)), \\
\label{eqn:todalattice2}
\frac{d}{dt}b_n(t) &= 2(a^2_n(t) - a^2_{n-1}(t)),
\end{align}
satisfying an initial value condition
\begin{align}
\label{eqn:todalatticeIV}
(a_n,b_n)(0) = (\tilde{a}_n,\tilde{b}_n)
\end{align}
for all $n \in \bbZ$.  Whenever the initial data is bounded, i.e. $\tilde a, \tilde b \in \ell^\infty(\mathbb{Z})$, there is a unique solution in $C^\infty(\mathbb{R}, \ell^\infty(\mathbb{Z}) \times \ell^\infty(\mathbb{Z}))$ of the initial value problem \eqref{eqn:todalattice1}, \eqref{eqn:todalattice2}, \eqref{eqn:todalatticeIV} \cite[Theorem 12.6]{TES00}.


The Toda lattice was originally proposed by Morikazu Toda in 1967 \cite{TOD67} as a model describing the positions and momenta of a chain of $k$ particles with positions $\{p_n\}_{n=1}^k$ and momenta $\{q_n\}_{n=1}^k$ interacting with Hamiltonian
\begin{align}
\label{eqn:hamiltonian}
H = \frac{1}{2}\sum_{n=1}^kp_n^2 + \sum_{n=1}^{k-1} \exp(-(q_{n+1}-q_n)) + \exp(-(q_n- q_{n-1})).
\end{align}
This system generalizes naturally to the case of infinitely many particles.  Toda's lattice is significant for being the first Hamiltonian with nearest-neighbor interactions demonstrating the existence of soliton solutions.

Toda was not the first to explore solid models with nearest-neighbor interactions.  Notably, in numerical experiments, Fermi, Pasta, and Ulam observed a surprising characteristic of systems defined by Hamiltonians with polynomial nonlinear interaction terms: that, rather than thermalizing to an equilibrium state, the particles asymptotically tended to almost periodic solutions in time \cite{FPU55}.  We intend to explore this same phenomenon  for the Toda lattice \eqref{eqn:hamiltonian}.

The formulation of the system described by \eqref{eqn:hamiltonian} as \eqref{eqn:todalattice1}, \eqref{eqn:todalattice2} follows from the change of variables due to Flaschka \cite{FLA74} given by
\begin{align*}
a_n &= \frac{1}{2}\exp(-\frac{1}{2}(q_{n+1}-q_n)), \\
b_n &= -\frac{1}{2}p_n.
\end{align*}
Flaschka's variables demonstrate that the Toda lattice is the discrete analogue of the Korteweg-de Vries equation \cite{KdV95} in the sense that the system \eqref{eqn:todalattice1}, \eqref{eqn:todalattice2} can be expressed equivalently as a Lax pair \cite{LAX68}
\begin{align}
\label{eqn:laxpair}
\frac{d}{dt}J(t) = [P(t),J(t)]
\end{align}
where
\begin{align*}
(J(t)u)_n = a_{n-1}(t)u_{n-1} + b_n(t)u_n + a_n(t)u_{n+1}
\end{align*}
and
\begin{align*}
(P(t)u)_n = -a_{n-1}(t)u_{n-1} + a_n(t)u_{n+1}.
\end{align*}
Systems with a Lax pair formulation are often described as completely integrable; indeed, it easily follows from \eqref{eqn:laxpair} that the operators $J(t)$ are mutually unitarily equivalent, which can be used to extract conserved quantities. This is particularly convenient for the periodic Toda lattice, which is a finite dimensional integrable system in which a maximal set of conserved quantities can be obtained as traces of powers of $J(t)$ \cite{vM76}. In the aperiodic case considered in this paper, one cannot rely on this method. Indeed, proving stronger statements of integrability is a problem highly dependent on the type of initial data considered.


In the context of the KdV equation, Deift \cite{DEI08} posed an open problem whether, for almost periodic initial data, solutions to the KdV equation were almost periodic in the time variable.  In \cite{BDGL15}, this question is partially answered in the affirmative; namely, that for small, quasi-periodic analytic initial data at Diophantine frequency, unique solutions exist and are almost periodic in time.  In this paper, we demonstrate that the methods of \cite{BDGL15} can be applied to the analogous question for the Toda lattice: for almost periodic initial data $(\tilde{a},\tilde{b})$, is it true that the solution to \eqref{eqn:todalattice1}, \eqref{eqn:todalattice2}, \eqref{eqn:todalatticeIV} is almost periodic in the time variable $t$? While the answer to this question was known (in a much stronger sense) to Vinnikov and Yuditskii \cite{VY02}, our main theorem provides sufficient spectral conditions on the initial data to guarantee this time almost periodicity indeed occurs.

We now describe an aspect of these spectral conditions. Denote by $J_0 = J(0)$ the Jacobi operator corresponding to the initial data $\tilde a, \tilde b$. The spectrum $E = \sigma(J_0)$ is compact, and can thus be written
\begin{align*}
E = [\underline{E},\overline{E}] \setminus \bigcup_{j \in I}(E^-_j,E^+_j)
\end{align*}
for an appropriate (at most countable) indexing set $I$.  Here, $\underline{E} = \inf E$, $\overline{E} = \sup E$, and $(E^-_j,E^+_j)$ are the bounded maximal open intervals in $\bbR \setminus E$, called gaps.  Denote the gap lengths by
\begin{align*}
\gamma_j = E_j^+ - E_j^-
\end{align*}
for $j \in I$, and the distances between gaps by
\begin{align*}
\eta_{j,l} &= \min\{|E_j^+ - E_l^-|,|E_j^- - E_l^+|\}, \\
\eta_j &= \min\{|E_j^+ - \overline{E}|,|E_j^- - \underline{E}|\}.
\end{align*}
It will be important for our spectrum to satisfy conditions analogous to that of Craig \cite{CRA89}.  If we denote
\begin{align}
\label{eqn:Cj}
C_j = ((\overline{E}-\underline{E})-\eta_j)^\frac{1}{2} \exp\left(\frac{1}{2} \sum_{k\neq j} \frac{\gamma_k}{\eta_{j,k}}\right).
\end{align}
we wish for the following spectral assumptions to hold:
\begin{align}
\label{eqn:craigcondn}
\sup_{j \in I} \gamma_jC_j < \infty, \hspace{1cm}
\sup_{j \in I} \frac{\gamma_j}{\eta_j}C_j < \infty, \hspace{1cm}
\sup_{j \in I} \sum_{k \neq j} \frac{(\gamma_j\gamma_k)^\frac{1}{2}}{\eta_{j,k}}C_j < \infty.
\end{align}
Heuristically, this condition guarantees that relatively large gaps of the spectrum do not accumulate.

In this context, we have a sufficient condition which demonstrates the time-and-space almost periodicity of the associated solution:

\begin{thm}
\label{thm:mainthm}
Let the initial data $J_0 = (\tilde{a},\tilde{b})$ be almost periodic.  Denote $E = \sigma(J_0)$, and assume that $E = \sigma_{ac}(J_0)$ and $E$ satisfies \eqref{eqn:craigcondn}. Then:
\begin{enumerate}
\item the unique solution $(a,b)(t)$ of the initial value problem \eqref{eqn:todalattice1},\eqref{eqn:todalattice2},\eqref{eqn:todalatticeIV} is bounded in time;
\item the solution is almost periodic in $t$, in the sense that there is a continuous map
\begin{align*}
\cM : \bbT^I \to \ell^\infty(\bbZ) \times \ell^\infty(\bbZ),
\end{align*}
a point $\omega \in \bbT^I$, and a direction $\zeta \in \bbR^I$ such that $(a,b)(t) = \cM(\omega + \zeta t)$;
\item for each $t \in \bbR$, the associated Jacobi operator $J(t)$ is almost periodic with frequency module equal to the frequency module of $J_0$.
\end{enumerate}
\end{thm}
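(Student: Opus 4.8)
The plan is to follow the strategy of \cite{BDGL15} for KdV, adapted to the Toda setting via the Jacobi-operator analogue of the approach. The key object is the \emph{isospectral torus} of the spectrum $E$: since $E = \sigma(J_0)$ satisfies the Craig-type condition \eqref{eqn:craigcondn} and $J_0$ is purely absolutely continuous on $E$, one expects that $J_0$ lies in the isospectral torus $\cT_E$ of reflectionless, almost periodic Jacobi matrices with spectrum $E$ (or more precisely, its closure), and that $\cT_E$ admits a homeomorphic parametrization by an infinite-dimensional torus $\bbT^I$ indexed by the gaps. This is where the Craig condition does its real work: it is precisely the quantitative hypothesis guaranteeing that the Abel-type map from the divisor data (positions and signs of the Dirichlet eigenvalues in the gaps) to $\bbT^I$ is a homeomorphism, so that one has a well-defined continuous chart $\cM : \bbT^I \to \ell^\infty(\bbZ)\times\ell^\infty(\bbZ)$ with $J_0 = \cM(\omega)$ for some $\omega$. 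I would quote or reconstruct this from the Sodin--Yuditskii / Craig theory of infinite-gap Jacobi matrices.

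\medskip
The second ingredient is that the Toda flow preserves this structure and acts by a linear (translation) flow on $\bbT^I$. Concretely: by the Lax pair \eqref{eqn:laxpair}, $J(t)$ is unitarily equivalent to $J_0$, hence $\sigma(J(t)) = E$ and $\sigma_{ac}(J(t)) = E$ for all $t$; moreover the reflectionless property is preserved under the Toda flow (it is preserved under the unitary conjugation implementing the flow, as this conjugation is local in a suitable sense, or equivalently one checks reflectionlessness is an isospectral invariant compatible with the flow). Therefore $J(t) \in \cT_E$ for all $t$, so there is a curve $t \mapsto \theta(t) \in \bbT^I$ with $(a,b)(t) = \cM(\theta(t))$. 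The Dunford--Taylor / trace-formula computation of the time derivatives of the Dirichlet data under \eqref{eqn:todalattice1}--\eqref{eqn:todalattice2} — the standard Toda Dubrovin equations — shows that in the Abel coordinates the motion linearizes: $\dot\theta(t) = \zeta$ for a constant direction $\zeta \in \bbR^I$ determined by $E$ alone. Hence $(a,b)(t) = \cM(\omega + \zeta t)$, which is conclusion (2); conclusion (1) is then immediate since $\cM$ has relatively compact range in $\ell^\infty \times \ell^\infty$ (indeed the range of $\cM$ is equicontinuous/bounded by the Craig estimates \eqref{eqn:craigcondn}, giving uniform bounds on $a_n(t), b_n(t)$).

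\medskip
For conclusion (3), I would argue that for each fixed $t$, $J(t) = \cM(\omega + \zeta t) \in \cT_E$ is almost periodic as a two-sided sequence in $n$ — this is part of the statement that elements of the isospectral torus are almost periodic (their Bohr/Bohr-type almost periodicity in $n$ follows from the character-automorphic representation of the Green's function on the Sodin--Yuditskii uniformization, again using \eqref{eqn:craigcondn} for convergence). The frequency module statement follows because the frequency module of any element of $\cT_E$ is generated by the harmonic-measure periods of $E$ (the group of characters of the fundamental group of $\bbC \setminus E$), which is an invariant of $E$, not of the particular point in the torus; in particular it equals the frequency module of $J_0$. I would spell this out by noting that both $J_0$ and $J(t)$ have Weyl $m$-functions that are character-automorphic with respect to the same character group $\Gamma^* \cong \bbT^I$, and the frequency module is the subgroup of $\bbR$ dual to $\Gamma$.

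\medskip
The main obstacle I anticipate is \emph{not} the algebraic linearization of the flow (that is classical Dubrovin-equation bookkeeping) but rather establishing rigorously that $\cM$ is genuinely a homeomorphism onto a set containing all the $J(t)$, with uniform control — i.e., that the Craig conditions \eqref{eqn:craigcondn} are exactly strong enough to make the infinite sums and products defining the Abel map, the Green's function, and the trace formulas converge absolutely and uniformly, and to make the torus parametrization continuous. In the KdV case \cite{BDGL15} this required delicate estimates; here the analogous estimates must be redone for Jacobi matrices, and the three separate suprema in \eqref{eqn:craigcondn} are presumably tailored precisely so that (i) $\gamma_j C_j$ bounds the individual contributions, (ii) $(\gamma_j/\eta_j) C_j$ handles the endpoint (boundary-of-spectrum) terms, and (iii) the double-sum condition controls the interaction between gaps. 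Verifying that these three bounds suffice for all the needed uniform convergence — and in particular that the flow stays inside the \emph{closure} of the parametrized torus and that almost periodicity in $n$ is uniform in $t$ — is the technical heart of the argument.
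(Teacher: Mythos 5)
Your overall strategy---reflectionless isospectral torus, Abel map, Dubrovin flow on the Dirichlet data, linearization---is the same as the paper's, and your reductions of (1) and (3) to the Sodin--Yuditskii structure theory are essentially what the paper does (with Remling's theorem supplying $J_0 \in \cJ(E)$ and \cite[Proposition 1.4]{REM15} supplying invariance of $\cJ(E)$ under the flow, rather than a ``locality of the conjugation'' argument). However, there are two genuine gaps in the step you dismiss as ``classical Dubrovin-equation bookkeeping,'' and they are precisely where the paper spends most of its effort. First, the Dubrovin vector field $\Psi_j$ degenerates at the gap edges (the factor $((E_j^+-\mu_j)(E_j^--\mu_j))^{1/2}$ vanishes there), so before you can even assert that the Dirichlet data flow by $\partial_t\varphi = \Psi(\varphi)$ for all time, you must rule out the possibility that a Dirichlet eigenvalue pauses at a gap edge. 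This is Proposition \ref{prop:disczeros}, and it is not bookkeeping: the paper proves it by combining time-invariance of subordinacy (Lemma \ref{lem:subordinv}), the Jitomirskaya--Last characterization of subordinate solutions, and real-analyticity in $t$ of solutions of $\partial_t u = Pu$ (which itself needs vanishing Lyapunov exponents on $E$, hence the Appendix). Your proposal contains no mechanism for this.

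Second, even granting the Dubrovin equations \eqref{eqn:dirichtimederiv}, the passage from them to the \emph{constancy} of $\dot\theta$ in Abel coordinates is not a direct computation in the infinite-gap setting: there is no infinite-genus Riemann surface on which to integrate Abelian differentials, and the Sodin--Yuditskii Abel map $\cA$ is not manifestly differentiable along the flow. The paper gets the linearization \eqref{eqn:linearize} by finite-gap approximation: it constructs finite-gap solutions $\varphi^N$ on $\cD(E^N)$ where \eqref{eqn:fingaplinearize} is known from \cite{GHMT08}, lifts them to $\cD(E)$, proves uniform convergence of the lifted trajectories via a Gronwall-type comparison of Lipschitz vector fields (Lemmas \ref{lem:vectfieldbd} and \ref{lem:finsolnapprox}) together with uniform convergence of $\cA^N$ and $(\cA^N)^{-1}$ (Lemma \ref{lem:unifconvinv}), and only then passes \eqref{eqn:fingaplinearize} to the limit. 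This approximation scheme---and the fact that the Craig conditions \eqref{eqn:craigcondn} give Lipschitz bounds on $\Psi$ that are \emph{uniform in $N$} over the truncated fields $\tilde\Psi^N$---is the missing idea; your closing paragraph correctly senses that uniform control is the technical heart, but attributes it to the construction of $\cM$ rather than to the limit argument that actually produces the direction $\zeta$.
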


\begin{rem}
After proving this theorem, we discovered that in fact a much stronger result was known to Vinnikov and Yuditskii \cite{VY02}: under milder spectral restrictions on the initial condition $J_0$, one has time and space almost-periodicity of solutions to any Lax pair \eqref{eqn:laxpair} where $P(t)$ is the skew-adjoint part of any bounded function $f \in L^\infty(E)$ applied to $J(t)$.  As such, our Theorem \ref{thm:mainthm} should be viewed as a discrete demonstration of the techniques developed in \cite{BDGL15}.

In the same breath, our approach also produces some auxiliary results which may be of independent interest and are new in the literature.  In particular, Proposition \ref{prop:disczeros} establishes the non-pausing of Dirichlet eigenvalues at gap edges under the Toda flow by proving the existence of subordinate gap-edge solutions under the Craig-type conditions above; consequently, we find in Proposition \ref{prop:dirichtimederiv} that the Dirichlet data in this setting flow in analog to the usual Dubrovin formulas.
\end{rem}

The proof of this theorem relies heavily on the previous work establishing analogous results for quasi-periodic initial data with finite-gap spectra; these results are collected in, e.g., \cite{GHMT08}.  It also relies on the inverse spectral-theoretic works of Remling \cite{REM11,REM15} and Sodin-Yuditskii \cite{SY95,SY97}.  These results and their relevance will be discussed in further detail later in the paper.

On comparison with the continuum analogue \cite{BDGL15}, one may notice that the application to small quasiperiodic initial data \cite[Theorem 1]{BDGL15} is conspicuously absent.  The proof of this application was due in large part to extensive machinery developed in \cite{DG14, DG15, DGL14.2, DGL14.3, DGL14.1} for the continuum Schr\"odinger operator.  The importance of this machinery is that it provides a direct spectral criterion guaranteeing that the analogue of \eqref{eqn:craigcondn} is satisfied.  With this in mind, we remark that the analogue of \cite[Theorem 1]{BDGL15} is very likely true in the Toda lattice case, pending a number of discrete analogues to continuum results.  One would then apply comparable results to \cite{DG14, DG15, DGL14.2, DGL14.3, DGL14.1} to small quasi-periodic initial data $J_0$ to verify this assumption, consequently concluding time almost periodicity.  Proving these discrete analogues, however, seems an undertaking well beyond the scope of this paper.

The paper will be structured as follows: in Section 2, we establish some definitions and notation and review some fundamental results in inverse spectral theory.  In Section 3, we review the cocycle representation of the Toda flow and describe the time evolution of the Weyl $M$-matrix.  Section 4 addresses perhaps the primary difficulty by demonstrating that the Dirichlet data corresponding to the Toda flow do not pause at the edges of gaps.  In Section 5, we prove a relation describing the Toda flow on the Dirichlet data; in particular, we prove that the Dirichlet data flow in a differentiable manner with respect to a Lipschitz vector field.  Finally, Section 6 addresses the linearization of the Toda flow under the Abel map, and contains the proof of Theorem \ref{thm:mainthm}.  For the curious reader, we have also appended a proof that reflectionless Jacobi operators have spectrally-everywhere vanishing Lyapunov exponents in the Sodin-Yuditskii regime (Appendix A).

\section{Background and Notation}

We consider bounded Jacobi operators, which are operators $J = J(a,b)$ on $\ell^2(\bbZ)$ parametrized by a pair of bounded, real-valued sequences $a,b \in \ell^\infty(\bbZ)$ as follows:
\begin{align}
\label{eqn:jacobiop}
(Ju)_n = a_nu_{n+1}+b_nu_n+a_{n-1}u_{n-1}.
\end{align}
Jacobi operators arise naturally in the context of the spectral theorem: any bounded self-adjoint operator $A$ with a cyclic vector is unitarily equivalent to a Jacobi operator on a half-line. They also correspond closely to orthogonal polynomials on the real line and generalize discrete one-dimensional Schr\"odinger operators (for which $a_n = 1$ for all $n \in \bbZ$).
We restrict to the non-singular case, where $a_n > 0$ for all $n \in \bbZ$.

It is easy to see that, under these conditions, $J$ is a bounded self-adjoint operator.  The spectrum $\sigma(J)$ is the set of $z \in \bbC$ for which $J-z$ does \textit{not} have a bounded inverse operator $(J-z)^{-1}: \ell^2(\bbZ) \to \ell^2(\bbZ)$.  Because $J$ is self-adjoint, $\sigma(J) \subset \bbR$, and because $J$ is bounded, $\sigma(J)$ is compact.

Associated to each Jacobi operator $J$ is its resolvent, $(J-z)^{-1}$, which (by definition) is a bounded operator for $z \in \mathbb{C} \setminus \sigma(J)$, the resolvent set.  This operator can likewise be put in a matrix form, with elements given by
\begin{align}
\label{eqn:greenfn}
r(n,m;J,z) = \langle \delta_n, (J-z)^{-1} \delta_m \rangle
\end{align}
We write $r(n;J,z) := r(n,n;J,z)$ for the diagonal elements of the resolvent matrix, and $r(J,z) := r(0;J,z)$ for the spectral theoretic Green's function of $J$.  This function is holomorphic on the resolvent set $\rho(J):= \mathbb{C}\setminus \sigma(J)$.

For $\psi \in \ell^2(\bbZ)$, the spectral measure $d\mu_\psi$ is the unique measure on $\bbR$ with the property that
\begin{align}
\label{eqn:spectmeas}
\langle \psi, (J-z)^{-1}\psi \rangle = \int \frac{1}{x-z} \, d\mu_\psi(x), \; \forall z \in \bbC \setminus \bbR.
\end{align}
Spectral measures are always supported on the spectrum.  Using the Lebesgue decomposition of $d\mu_\psi = d\mu_{\psi,ac} + d\mu_{\psi,s}$, the absolutely continuous spectrum $\sigma_{ac}(J)$ can be defined as the smallest common topological support of absolutely continuous parts of all spectral measures,
\begin{align*}
\sigma_{ac}(J) = \overline{\bigcup_{\psi \in \ell^2(\bbZ)} \supp d\mu_{\psi,ac}}
\end{align*}
Clearly, $\sigma_{ac}(J) \subset \sigma(J)$.  We will focus on cases where $\sigma_{ac}(J) = \sigma(J)$.

For each $z \notin \sigma(J)$, the difference equation $Ju = zu$ has nontrivial (formal) solutions $u_\pm(J;z)$, called Weyl solutions, such that $u_\pm(J;z) \in \ell^2(\bbZ_\pm)$. The Weyl solutions are clearly linearly independent, and thus span the 2-dimensional solution space $\ker(J-z)$.  Furthermore, they are unique up to a multiplicative constant.  For simplicity of notation, we write
\begin{align}
\label{eq:vplusminus}
V_\pm(J;z) &= \begin{bmatrix}
u_\pm(J;z)(1) \\
a_0(J)u_\pm(J;z)(0)
\end{bmatrix}.
\end{align}
The resolvent function then satisfies the important relation
\begin{align*}
r(n,m;J,z) = \dfrac{u_\pm(n)u_\mp(m)}{W(u_-,u_+)},
\end{align*}
where $\pm 1 = \sgn(n-m)$ and $W(u_-,u_+) = a_0(u_-(0)u_+(1) - u_-(1)u_+(0))$ denotes the Jacobi Wronskian.

In terms of the Weyl solutions, one can express the half-line $m$-functions by
\begin{align*}
m_\pm(J;z) = \mp \frac{u_\pm(J;z)(1)}{a_0(J)u_\pm(J;z)(0)}.
\end{align*}
These functions are the half-line analogues of the spectral Green's function.  For fixed $J$, these are meromorphic functions of $z \in \bbC \setminus E$ which analytically map the upper half-plane to itself.  We write
\begin{align*}
M_\pm(J;z) = \begin{bmatrix}
m_\pm(J;z) \\
1
\end{bmatrix},
\end{align*}
and remark that $V_\pm(J;z) = \mp M_\pm(J;z)$ in $\bbC\bbP^1$.  Note the important equalities
\begin{align}
\label{eqn:greenstomfns}
r(0;J,z) &= \frac{1}{a_0^2(J)(m_+(J,z) + m_-(J,z))}, \\
r(1;J,z) &= \frac{m_-(J,z)m_+(J,z)}{m_+(J,z)+m_-(J,z)}.
\end{align}
It can be shown that $r(J,z)$ has at most one simple zero  $\mu_j \in (E_j^-,E_j^+)$ for each $j \in I$.  When such a zero exists, it is likewise a pole of exactly one of $m_\pm(J;z)$.  Write $\sigma_j = \pm1$ in this case.  More precisely, fixing $J$ and observing that $r(n,n;z)$ is strictly increasing for $z \in (E_j^-,E_j^+)$, we define
\begin{align*}
\mu_j(n) = \begin{cases}
z \in (E_j^-,E_j^+) & r(n,n;z) = 0 \\
E_j^+ & r(n,n;z) < 0  \;\; \forall z \in (E_j^-,E_j^+) \\
E_j^- & r(n,n;z) > 0 \;\; \forall z \in (E_j^-,E_j^+)
\end{cases}
\end{align*}

In the first case, the sign $\sigma_j$ is defined so that $m_{\sigma_j}$ has a pole at $\mu_j$; in the latter two cases, we don't have this dichotomy, so we say $\sigma_j = 0$. The pairs $\{(\mu_j(n), \sigma_j(n))\}_{(j,n) \in I \times \bbZ}$ form the so-called Dirichlet data of $J$.

Define the isospectral torus $\cD(E) = \bbT^I$ with the metric
\begin{align}
\label{eqn:metric}
\|\varphi - \tilde{\varphi}\|_{\cD(E)} = \sup_{j\in I} \gamma_j^\frac{1}{2}\|\varphi_j - \tilde{\varphi}_j\|_{\bbT}.
\end{align}
Here, we denote by $\bbT$ the topological circle $\bbR/2\pi\bbZ$.  Introduce variables $\varphi$ on $\cD(E)$ given implicitly by:
\begin{align}
\label{eqn:mutophi}
\mu_j &= E_j^- + (E_j^+ - E_j^-)\cos^2(\frac{\varphi_j}{2}) \\
\label{eqn:sigma}
\sigma_j &= \begin{cases}
+1 & \varphi_j \in (0,\pi) + 2\pi\bbZ \\
-1 & \varphi_j \in (-\pi,0) + 2\pi\bbZ \\
0 & \varphi_j \in 2\pi\bbZ
\end{cases}.
\end{align}
Denote by $\varphi(n) := (\varphi_j(n))_{j \in I}$ the angular coordinates corresponding to $\mu_j(n)$.

The tangent space of a point on $\cD(E)$ will be equipped with the norm
\begin{align}
\label{eqn:tangentspacenorm}
\|v\| = \sup_{j \in I} \gamma_j^\frac{1}{2}|v_j|
\end{align}
and vector fields on $\cD(E)$ will be equipped with the sup-norm obtained from \eqref{eqn:tangentspacenorm}.  Denote the map taking a Jacobi operator $(a,b)$ to its Dirichlet data at the origin by $\cB$:
\begin{align}
\label{eqn:optodiv}
\cB((a,b)) := \varphi(0).
\end{align}

We introduce a vector field $\Psi$ via
\begin{align*}
\Psi_j(\varphi) &=  2\Big( (\underline{E} - \mu_j )(\overline{E} - \mu_j) \prod_{k \neq j} \frac{(E_k^- - \mu_k)(E_k^+ - \mu_k)}{(\mu_k - \mu_j)^2} \Big)^{\frac{1}{2}}.
\end{align*}
 We will soon see that this vector field describes the Toda flow in the discrete case. Craig initially proposed a similar vector field to describe the translation flow in the continuum case \cite{CRA89}.  Roughly speaking, $\Psi_j(\varphi)$ is the residue of $(r(J,z))^{-1}$ at $\mu_j$.  This is not quite true in general -- $r(J,z)$ does not have proper zeroes at the gap edges -- but at least motivates the definition.

To ensure that the vector field $\Psi$ is Lipschitz, we also enforce condition \eqref{eqn:craigcondn} on $E$.  This condition prevents relatively large gaps from accumulating at a gap edge; we show this is analogous to Craig's conditions \cite[Theorem 6.2]{CRA89} in Lemma \ref{lem:craigequiv}.

We are interested particularly in the class of Jacobi operators which can be completely recovered from their Dirichlet data.  To this end, we introduce the following

\begin{defn} Let $E \subset \mathbb{R}$. A Jacobi operator $J$ is said to be \textit{reflectionless on $E$} if for every $n \in \bbZ$
\begin{align}
\label{eqn:reflectionless}
\Re(r(n;J,x+i0))=0
\end{align}
for Lebesgue almost every $x \in E$.
\end{defn}
The importance of the reflectionless condition is that it allows one to recover a Jacobi operator's resolvent $r(n;z)$ from its spectral data $\{E_j^\pm\}\cup \{(\mu_j(n),\sigma_j(n))\}_{j\in I}$.


For a fixed positive-measure compact set $E \subset \mathbb{R}$, define
\begin{align*}
\cJ(E) :&= \{(a,b) \in \ell^\infty(\mathbb{Z}) \times \ell^\infty(\mathbb{Z}) : \sigma(J(a,b)) \subset E, \; J(a,b) \text{ is reflectionless on } E\}.
\end{align*}
 If $E$ has $|I| < \infty$ gaps, $\cJ(E)$ can be parametrized by the real divisors on the two-sheeted genus $|I|$ Riemann surface $\Sigma_{I}$ covering $\hat{\bbC}\setminus E$.  Thus, $\cJ(E)$ is homeomorphic to the real Jacobian of $\Sigma_{I}$, which is in turn homeomorphic (via the so-called Abel map) to a torus in $|I|$ dimensions, $\bbT^{I}$, by the Abel-Jacobi theorem \cite{MIR95}.

In a remarkable paper \cite{SY97}, Sodin and Yuditskii showed that, if $E$ is homogeneous in the sense of Carleson, a generalized Abel map $\cA$ gives a homeomorphism between $\pi^*(\bbC \setminus E) := \Hom(\pi_1(\bbC \setminus E),\bbT)$ and $\cD(E)$, and the restriction $\cB : \cJ(E) \to \cD(E)$ of \eqref{eqn:optodiv} is likewise a homeomorphism.  What's more, they provide continuous trace formulas $\cP, \cQ : \cD(E) \to \bbR$ allowing recovery of the parametrizing sequences (see Appendix A).  We denote
\begin{align}
\label{eqn:trform}
\cV(\varphi) := (\cP(\varphi),\cQ(\varphi)).
\end{align}

When $E$ is homogeneous (in the sense of Carleson), the shift action on $\cJ(E)$ is completely understood: namely, shifting corresponds exactly to translation in $\pi^*(\bbC \setminus E)$ by the constant vector $\alpha_E$.  Our condition \eqref{eqn:craigcondn} is, in fact, stronger than homogeneity (see Appendix in \cite{BDGL15}); thus, for $E$ satisfying \eqref{eqn:craigcondn}, we are indeed in Sodin-Yuditskii's regime.

The idea behind the proof of Theorem \ref{thm:mainthm} can be summarized as follows: Our assumptions allow us to conclude that our initial data lies in $\cJ(E)$, that $E$ is homogeneous, and that the trace formulas $\cP$ and $\cQ$ are valid.  This will suffice to prove (1).  We use this information to pass from our initial data to the Dirichlet data, and conclude global existence and uniqueness of the flow there.  To show (2), we will approximate our infinite-gap operators in Dirichlet data via lifts of finite-gap operators, where this theorem is known, into our larger isospectral torus, and use uniform convergence to conclude that the result must still hold.  Finally, (3) comes for free as an immediate corollary of Sodin-Yuditskii \cite[Corollary of Theorem C]{SY97}.  The remainder of this paper is, of course, dedicated to the details.

\section{Evolution of the Weyl M-matrix Under the Toda Flow}

In this section, we review a number of prior results, which can be found, for example, in \cite{TES00}.

Under the assumptions above, we have that $\cJ(E)$ is homeomorphic to a torus of some dimension.  A fundamental action on $\cJ(E)$ is given by conjugation by the shift operator; namely, if $J = J(a,b)$, we are interested in $SJS^* = J(Sa,Sb)$, where $S:\delta_n \mapsto \delta_{n-1}$.  We abuse notation and write $SJ$ for this action.  By fixing a solution space and canonical basis, it is possible to explicitly express the action of $S$ on $\ker(J-z)$. Namely, for $z \in \bbC$, in the 2-dimensional solution space $\ker(J-z) \subset \bbC^\bbZ$ we have a correspondence of any solution to a vector in $\bbC^2$, given by
\begin{align*}
u \leftrightarrow \begin{bmatrix}
u(1) \\
a_0u(0)
\end{bmatrix}.
\end{align*}
The corresponding basis of solutions is given by solutions $e_1$ and $e_2$ such that
\begin{align}
\label{eqn:e1}
e_1(J;z) &\leftrightarrow \begin{bmatrix}
1 \\
0
\end{bmatrix} \\
\label{eqn:e2}
e_2(J;z) &\leftrightarrow \begin{bmatrix}
0 \\
1
\end{bmatrix}
\end{align}
and the corresponding representation of $S : \ker(J-z) \to \ker(SJ - z)$ is
\begin{align*}
S(J;z) &:= \begin{bmatrix}
 Se_1(J;z) \; \vert \; Se_2(J;z)
 \end{bmatrix} \\
 &= \dfrac{1}{a_1(J)}\begin{bmatrix}
z-b_1(J) & -1 \\
a^2_1(J) & 0
\end{bmatrix}.
\end{align*}
$S(J;z)$ is commonly referred to as a 1-step transfer matrix.  

We intend to study a differentiable $\bbR$-action on $\cJ(E)$ induced by the Toda lattice \eqref{eqn:todalattice1}, \eqref{eqn:todalattice2}.  This action, initially characterized via equations relating the positions and momenta of particles on a line, was shown to be equivalent to a Lax pair \eqref{eqn:laxpair}, where $J(t) = (a,b)(t)$ and $P(t)$ is the skew symmetric operator given by
\begin{align*}
(P(t)u)_n = -a_{n-1}(t)u_{n-1} + a_n(t)u_{n+1}.
\end{align*}
As a consequence of the Lax pair formalism, for any solution $J(t)$ to \eqref{eqn:laxpair}, for each $t_0, s_0 \in \bbR$, $J(t_0)$ is unitarily equivalent to $J(s_0)$ via a unitary propagator.  We are thus interested in differentiable solutions $u(t)$ to $J(t)u(t) = \lambda u(t)$; in particular, those satisfying
\begin{align}
\label{eqn:solnderiv}
\partial_t u(t) = P(t)u(t)
\end{align}
will be of some importance.  Such solutions are uniquely determined by an initial value \cite[Lemma 12.15]{TES00}.  

The key feature of the Toda lattice flow that we will use is the following:
\begin{prop}\cite[Proposition 1.3, Proposition 1.4]{REM15}
\label{prop:toda}
Denote by $J(t)$ the unique solution to \eqref{eqn:todalattice1},\eqref{eqn:todalattice2}, with initial condition $J \in \cJ(E)$.  Then $J(t) \in \cJ(E)$ for all $t \in \bbR$.
\end{prop}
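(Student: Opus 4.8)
We must establish two things for every $t$: that $\sigma(J(t)) \subseteq E$, and that $J(t)$ is reflectionless on $E$. The first is a direct consequence of the Lax pair. On each compact time interval the operators $P(t)$ are uniformly bounded and skew-adjoint, so the propagator $U(t)$ solving $\partial_t U(t) = P(t) U(t)$, $U(0) = I$, exists and is unitary; differentiating $U(t)^* J(t) U(t)$ and using \eqref{eqn:laxpair} shows this quantity is constant, whence $J(t) = U(t) J(0) U(t)^*$. Thus $\sigma(J(t)) = \sigma(J(0)) \subseteq E$, and, more to the point, $J(t)$ has exactly the same absolutely continuous part --- and in particular the same essential support of the a.c.\ spectrum --- as $J(0)$. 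Since $J(0) \in \cJ(E)$ is reflectionless on $E$, its a.c.\ spectrum has essential support containing $E$ (a standard consequence of $\Re r(0;J(0),x+i0) = 0$ a.e.\ on $E$ together with Privalov's uniqueness theorem); combined with $\sigma(J(0)) \subseteq E$ this forces the essential support to equal $E$, and hence the same holds for $J(t)$ for every $t$.

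For the reflectionless property the plan is to combine Remling's right-limit theorem with the shift-recurrence of elements of $\cJ(E)$. Remling's theorem \cite{REM11,REM15} asserts that any Jacobi matrix whose a.c.\ spectrum has essential support $\Sigma$ has the property that \emph{every right limit} of it is reflectionless on $\Sigma$; applied to $J(t)$, for which $\Sigma = E$ by the previous paragraph, it shows every right limit of $J(t)$ is reflectionless on $E$. It therefore suffices to prove that $J(t)$ is a right limit of itself, i.e.\ that there are $n_k \to +\infty$ with $S^{n_k} J(t) \to J(t)$ in the topology of local uniform convergence. For this I would invoke the Sodin--Yuditskii description of $\cJ(E)$: because \eqref{eqn:craigcondn} makes $E$ homogeneous, $(\cJ(E),S)$ is conjugate to translation by a fixed vector on the torus $\cD(E)$, so the shift-orbit closure of any point of $\cJ(E)$ is a coset of a compact subgroup of $\cD(E)$ on which $S$ acts as a minimal rotation; in particular the shift-hull of $J(0)$ is a minimal dynamical system. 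The Toda flow $\Phi_t$ is continuous in the local-uniform topology and commutes with $S$ (by translation invariance of \eqref{eqn:todalattice1}--\eqref{eqn:todalattice2} and continuous dependence on data), so $\Phi_t$ carries the shift-hull of $J(0)$ homeomorphically and $S$-equivariantly onto the shift-hull of $J(t) = \Phi_t(J(0))$; minimality is preserved under such a conjugation, so the shift-hull of $J(t)$ is also minimal, and hence $J(t)$ lies in its own $\omega$-limit set under $S$. This produces the desired sequence $n_k$, so $J(t)$ is reflectionless on $E$, and with the first paragraph we conclude $J(t) \in \cJ(E)$.

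In this form the argument assembles three established inputs --- the Lax representation, Remling's right-limit theorem, and the Sodin--Yuditskii parametrization of $\cJ(E)$ --- so the only genuinely new verifications are that the Toda flow respects the shift dynamics and that members of $\cJ(E)$ are recurrent; neither is difficult, and that is why I would favor this route. A more self-contained but, I expect, harder alternative works directly with the Weyl $m$-functions: derive from \eqref{eqn:solnderiv} Riccati-type evolution equations of the shape $\partial_t m_\pm(z,t) = \pm 2 + 2(z - b_1(t))\, m_\pm \pm 2 a_0(t)^2 m_\pm^2$ (signs up to the conventions in force), observe that since these have real coefficients at real $z$ the boundary function $-m_-(x-i0,t)$ solves the same ODE as $m_+(x+i0,t)$, and conclude by ODE uniqueness that the identity $m_+(x+i0,t) = -\overline{m_-(x+i0,t)}$ characterizing reflectionlessness on $E$ persists in $t$; one then transfers this to all lattice sites using that $S$ commutes with the flow. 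The main obstacle in this second approach is the boundary passage --- upgrading the interior ($\Im z > 0$) evolution identities to an a.e.\ identity on the real set $E$, uniformly in $t$ --- precisely the difficulty that the right-limit argument circumvents.
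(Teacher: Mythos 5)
Your argument is essentially sound, but it follows a genuinely different route from the paper, which in fact offers no proof at all: Proposition \ref{prop:toda} is quoted directly from Remling \cite{REM15}, whose argument (and the machinery this paper itself builds in Section 3, culminating in Theorem \ref{thm:Mmatrixderiv}) is the $m$-function/reflection-coefficient computation you sketch as your ``second approach.'' Your primary route --- unitary equivalence from the Lax pair \eqref{eqn:laxpair} to pin down $\sigma(J(t))$ and $\Sigma_{ac}(J(t))$, then Remling's right-limit theorem \cite{REM11} combined with shift-recurrence of $J(t)$ --- is a clean way to sidestep the boundary-value passage that you correctly identify as the delicate point of the Riccati route. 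What it buys is economy (three black boxes and two easy verifications); what it costs is generality and two technical debts you should not wave away. First, recurrence of $J(t)$ under $S$ is extracted from the Sodin--Yuditskii torus structure, which requires $E$ homogeneous; the proposition as stated (and as proved by Remling) holds for arbitrary $\cJ(E)$ with no regularity on $E$, so you are proving a strictly weaker statement that happens to suffice under the standing assumption \eqref{eqn:craigcondn}. Second, the conjugation step needs the Toda flow $\Phi_t$ to be continuous in the \emph{local-uniform} (product) topology on uniformly bounded sets, not merely Lipschitz in $\ell^\infty$ as in \cite[Theorem 12.6]{TES00}; this is true, via a finite-speed-of-propagation estimate for \eqref{eqn:todalattice1}--\eqref{eqn:todalattice2} (and is essentially the content of \cite[Proposition 1.3]{REM15}, that right limits commute with the flow), but it is a real estimate, not a formality, and your proof should either prove it or cite it. With those two points supplied, the minimality/recurrence argument and the application of the oracle theorem are correct.
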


%
%

We now prove some relevant facts regarding the evolution of this system.  First, we will require the following
lemma showing time-invariance of subordinacy (in the sense of Gilbert and Pearson \cite{GP87}); in particular, this means that if $u$ satisfies \eqref{eqn:solnderiv} and $u(0) = u_+$, then $u(t) = u_+(t)$ is the Weyl solution for all time \cite[Lemma 12.16]{TES00}:

\begin{lem}
\label{lem:subordinv}
Suppose $(a,b) :\bbR \to \ell^\infty(\bbZ) \times \ell^\infty(\bbZ)$ is
the solution
 of \eqref{eqn:todalattice1}, \eqref{eqn:todalattice2}, and $z \in \bbC$.  Then subordinacy at $\pm \infty$ is time-invariant; i.e., if $u_0$ solving $J_0u = z u$ is subordinate at $\pm \infty$, then $u(t)$ solving $J((a,b)(t))u(t) = z u(t)$, $\partial_t u(t) = P(t)u(t)$, $u(0) = u_0$ is subordinate at $\pm\infty$ for all $t \in \bbR$.
\end{lem}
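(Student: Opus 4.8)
The plan is to split the argument according to whether $\Im z\neq 0$ or $z=x\in\bbR$. When $\Im z\neq 0$, subordinacy at $\pm\infty$ is simply equivalent to square-summability near $\pm\infty$, hence to being a constant multiple of the Weyl solution $u_\pm(J_0;z)$; since the unique solution of \eqref{eqn:solnderiv} with $u(0)=u_\pm(J_0;z)$ is $u(t)=u_\pm(J(t);z)$ by the cited consequence of the Lax pair formalism (\cite[Lemma 12.16]{TES00}), and this is again square-summable near $\pm\infty$, the lemma is immediate for non-real $z$. The real content is therefore the case $z=x\in\bbR$, which I would reach by passing to the limit $\e\downarrow 0$ in the approximating problems at energy $x+i\e$.

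So fix $x\in\bbR$ and let $u(t)$ be the unique solution of $J(t)u(t)=xu(t)$, $\partial_t u(t)=P(t)u(t)$, $u(0)=u_0$ furnished by \cite[Lemma 12.15]{TES00}. Via the transfer-matrix description of solutions, $u(t)$ is reconstructed from the pair $(u(t)(0),u(t)(1))$ — which satisfies a closed $2\times 2$ linear ODE whose coefficients depend continuously on the Toda data and on the energy — by applying the matrices $S(J(t);x)$, themselves continuous in the energy. Hence $u(t)$ depends continuously, pointwise in $n$ and uniformly for $t$ in compact intervals, on the energy and on the initial pair. By the subordinacy theory of Gilbert and Pearson \cite{GP87}, $u_0$ being subordinate at $+\infty$ forces $m_+(J_0;x+i0):=\lim_{\e\downarrow 0}m_+(J_0;x+i\e)$ to exist in $\bbR\cup\{\infty\}$, and then — in a normalization chosen according to whether this limit is finite or infinite — $u_+(J_0;x+i\e)$ converges, pointwise in $n$, to a nonzero real solution proportional to $u_0$. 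Combining this convergence with the continuity above and with the identity $u(t)=u_+(J(t);x+i\e)$ for $u(0)=u_+(J_0;x+i\e)$ from the non-real case gives
\begin{align*}
u(t)=\lim_{\e\downarrow 0}u_+(J(t);x+i\e)
\end{align*}
pointwise in $n$, in the same normalization; since the $2\times2$ ODE at real energy has real coefficients, $u(t)$ is real. Consequently $m_+(J(t);x+i0)=\lim_{\e\downarrow 0}m_+(J(t);x+i\e)$ exists in $\bbR\cup\{\infty\}$, so invoking \cite{GP87} in the reverse direction shows that $J(t)$ admits a subordinate solution at $+\infty$ at energy $x$, which must be a multiple of $u(t)$. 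The case of $-\infty$ is entirely analogous, with $m_-$, $u_-$ in place of $m_+$, $u_+$.

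The step I expect to be the main obstacle is the honest reconciliation of the full-line Lax/propagator structure with the one-sided notion of subordinacy: the Lax flow transparently transports square-summable — and hence, for $\Im z\neq 0$, subordinate — solutions, but at real energies subordinacy is a genuinely asymptotic half-line condition, so one must run the limit $x+i\e\to x$ and appeal to both directions of the Gilbert--Pearson description of subordinate solutions as normalized boundary values of Weyl solutions, with some care in choosing normalizations near energies where $m_\pm(x+i0)=\infty$. The continuity of $u(t)$ in the energy, by contrast, is routine once the transfer-matrix representation of solutions is in hand and requires no operator-theoretic input.
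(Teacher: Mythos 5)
Your argument is essentially correct, but it takes a genuinely different route from the paper's. The paper works directly with the definition of subordinacy: writing $S(L,t)=\|u(t)\|_L^2$ for the truncated norms, it computes $\partial_t S(L,t)=2\sum_{j=0}^{L}\Re\bigl(\overline{u_j(t)}(Pu)_j(t)\bigr)$, bounds this by $\tilde{C}\,S(L,t)$ using only the uniform boundedness of $(a,b)$ and the three-term recursion, and applies Gronwall to get $e^{-\tilde{C}|t|}S(L,0)\le S(L,t)\le e^{\tilde{C}|t|}S(L,0)$ \emph{uniformly in} $L$. The defining ratio $\|u(t)\|_L/\|v(t)\|_L$ is then controlled by $e^{2\tilde{C}|t|}\|u_0\|_L/\|v_0\|_L\to 0$, and the lemma follows for every $z\in\bbC$ at once, with no case analysis and no spectral-theoretic input. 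Your route instead transports the boundary data of the subordinate solution through the $\e\downarrow 0$ limit of Weyl solutions, which forces you to invoke the subordinacy--boundary-value correspondence in \emph{both} directions. Two cautions there: first, the pointwise-in-energy ``iff'' you need (existence of $\lim_{\e\downarrow 0}m_+(x+i\e)$ in $\bbR\cup\{\infty\}$ is equivalent to existence of a subordinate solution, with matching boundary data) is not really what \cite{GP87} proves --- their results are almost-everywhere statements relative to spectral measures; the energy-by-energy equivalence is the Khan--Pearson/Jitomirskaya--Last refinement \cite{JL99}, and for general boundary values it requires the M\"obius-rotation bookkeeping you allude to. Second, the normalization near $m_+(x+i0)=\infty$ and the reality of the limiting solution must be handled exactly as you sketch, or the limit of the ratio $u(t)(1)/\bigl(a_0(t)u(t)(0)\bigr)$ need not land in $\bbR\cup\{\infty\}$. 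With those details filled in your proof goes through, but it buys nothing over the paper's two-line Gronwall estimate, which is both more elementary and uniform over all energies; what your approach does make transparent is the identification of $u(t)$ as the limit of Weyl solutions, which is close in spirit to how the paper later uses this lemma together with Jitomirskaya--Last in the proof of Proposition \ref{prop:disczeros}.
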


\begin{proof}
Denote by $J(t) := J((a,b)(t))$ the Jacobi operator associated to $(a,b)(t)$, fix $z \in \bbC$, and let $K(t) = \ker(J(t)-z)$.  We treat only the subordinacy at $+\infty$ case, noting the $-\infty$ case is exactly analogous.  Define a norm $\|\cdot\|_L$ on $\bigcup_{t \in \bbR} K(t)$ by
\begin{align*}
\|u\|_L^2 = \sum_{j=0}^{\lfloor L \rfloor} |u_j|^2 + (L-\lfloor L \rfloor)|u_{\lfloor L \rfloor + 1}|^2,
\end{align*}
where $\lfloor \cdot \rfloor$ denotes the typical floor function.  Suppose also that $u_0 \in K := K(0)$ is subordinate, i.e. for any $v_0 \in K$ which is linearly independent of $u_0$,
\begin{align}
\label{eqn:subordinatedef}
\lim_{L \to \infty} \frac{\|u_0\|_L}{\|v_0\|_L} = 0.
\end{align}

Since $P$ is uniformly bounded operator and $u$ is a solution of $Ju = z u$, there is some constant $C(t) > 0$ such that $|(Pu)_j|\leq C(t)(|u_j|+|u_{j+1}|)$ for all $u \in K(t)$, for all $j \in \bbZ$.  In fact, since our parametrizing sequences $a(t), b(t)$ are uniformly bounded in $t$ and $P$ depends polynomially on these sequences, this constant $C(t)$ is in fact uniformly bounded (say by $C$) in $t$.  Thus, if $u(t) \in K(t)$ is a solution, if we write $S(L,t) := \|u(t)\|^2_L$ (with $L \in \bbZ$ for simplicity), then we have
\begin{align}
\label{eqn:Sderivbd}
|2\sum_{j=0}^L\Re(\overline{u_j(s)}(Pu)_j(s))| &\leq 8C(S(L,t) + |u_{L+1}(s)|^2) \\
&\leq \tilde{C}S(L,t)
\end{align}
where the first inequality in the comes from the Cauchy-Schwartz inequality and the definition of $P$ \cite[Equation 12.96]{TES00}, and the second comes from the recursion relation $Ju = \lambda u$ and the uniform boundedness of the parametrizing sequences $(a,b)$.

Thus, by \eqref{eqn:Sderivbd}, we have
\begin{align*}
|\frac{d}{dt}S(L,t)| &= |2\sum_{j=0}^L\Re(\overline{u_j(s)}(Pu)_j(s))| \\
&\leq \tilde{C}S(L,t)
\end{align*}
and, by Gronwall's inequality, it follows that
\begin{align}
\label{eqn:Lnormbd}
S(L,0)\exp(-\tilde{C}t)\leq S(L,t) \leq S(L,0)\exp(\tilde{C}t).
\end{align}

Suppose $u(\cdot) : \bbR \to K(\cdot)$ solves $\partial_t u =Pu$, and that $u_0 = u(0)$ is subordinate at $+\infty$.  Let $v_{t_0} \in K(t_0)$ be linearly independent of $u(t_0)$.  There exists a unique $v(t)$ solving $\partial_t v = Pv$, $J(t)v(t) = z v(t)$ such that $v(t_0) = v_{t_0}$; what's more, this solution is linearly independent of $u(t)$ for all time by uniqueness of weak solutions \cite[Theorem 12.5]{TES00}.  In particular, $v_0 := v(0)$ is linearly independent of $u_0$.  Then, by \eqref{eqn:Lnormbd} and subordinacy of $u_0$,

\begin{align*}
\lim_{L\to \infty}\frac{\|u(t)\|_L}{\|v(t)\|_L} &\leq \lim_{L \to \infty} \frac{\exp(2\tilde{C}t)\|u_0\|_L}{\|v_0\|_L} \\
 &= 0
\end{align*}
i.e., $u(t)$ is subordinate at $+\infty$ in $K(t)$.
\end{proof}

As a consequence of the time-invariance of Weyl solutions, we find the following

\begin{prop}
\label{pr:Amat}
There is a function $A: \bbR \times \cJ(E) \times (\bbC \setminus E) \to \mathfrak{sl}_2(\bbC)$ given by
\begin{align}
\label{eqn:derivfactmat}
A(t,J;z) = \begin{bmatrix}
(z-b_1(t)) & -2 \\
2a^2_0(t) & -(z-b_1(t))
\end{bmatrix},
\end{align}
such that
\begin{align}
\label{eqn:cocyclederiv}
\partial_t V_\pm(t) = A(t)V_\pm(t).
\end{align}
Here, $V_\pm$ are vectors \eqref{eq:vplusminus} corresponding to Weyl solutions of $J(t)u = zu$ satisfying \eqref{eqn:solnderiv}, where $J(t) = J(a,b)(t)$ is the unique solution to \eqref{eqn:todalattice1},\eqref{eqn:todalattice2}, with initial condition $J \in \cJ(E)$.
\end{prop}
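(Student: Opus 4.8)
The plan is to use Lemma~\ref{lem:subordinv} to reduce the assertion to a short computation. First, by Proposition~\ref{prop:toda} we have $J(t) = J((a,b)(t)) \in \cJ(E)$ for every $t \in \bbR$, so $\sigma(J(t)) \subset E$ and the Weyl solutions $u_\pm(J(t);z)$ exist for every $z \in \bbC \setminus E$. Fix such a $z$ and let $u_\pm(t)$ be the solution of the linear evolution $\partial_t u_\pm(t) = P(t) u_\pm(t)$ with $u_\pm(0) = u_\pm(J_0;z)$, the Weyl solution at $\pm\infty$; this solution exists and is unique by \cite[Lemma 12.15]{TES00}, and it satisfies $J(t) u_\pm(t) = z u_\pm(t)$ for all $t$. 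Since $u_\pm(0)$ is subordinate at $\pm\infty$, Lemma~\ref{lem:subordinv} shows $u_\pm(t)$ is subordinate at $\pm\infty$ for every $t$; being a nontrivial solution of $J(t) u = z u$, it is therefore itself a Weyl solution of $J(t)$, with a normalization that is smooth in $t$. Hence the vector $V_\pm(t)$ of \eqref{eq:vplusminus} built from $u_\pm(t)$ is a well-defined, differentiable function of $t$, and it remains only to compute $\partial_t V_\pm(t)$.

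Next I would differentiate the two entries of $V_\pm(t)$ in \eqref{eq:vplusminus}, namely $u_\pm(t)(1)$ and $a_0(t) u_\pm(t)(0)$. Using $\partial_t u_\pm(t) = P(t) u_\pm(t)$ gives $\partial_t u_\pm(t)(1) = (P(t) u_\pm(t))(1) = -a_0(t) u_\pm(t)(0) + a_1(t) u_\pm(t)(2)$, while the product rule together with \eqref{eqn:todalattice1} gives $\partial_t\big(a_0(t) u_\pm(t)(0)\big) = a_0(t)(b_1(t) - b_0(t)) u_\pm(t)(0) + a_0(t)\big(-a_{-1}(t) u_\pm(t)(-1) + a_0(t) u_\pm(t)(1)\big)$. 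Both right-hand sides involve $u_\pm(t)$ only at the sites $-1,0,1,2$, and I would eliminate the outermost two using the difference equation $J(t) u_\pm(t) = z u_\pm(t)$: at $n = 1$ it gives $a_1(t) u_\pm(t)(2) = (z - b_1(t)) u_\pm(t)(1) - a_0(t) u_\pm(t)(0)$, and at $n = 0$ it gives $a_{-1}(t) u_\pm(t)(-1) = (z - b_0(t)) u_\pm(t)(0) - a_0(t) u_\pm(t)(1)$. Substituting these in, the terms carrying $a_{-1}(t)$ and $b_0(t)$ cancel, leaving $\partial_t u_\pm(t)(1) = (z - b_1(t)) u_\pm(t)(1) - 2 a_0(t) u_\pm(t)(0)$ and $\partial_t\big(a_0(t) u_\pm(t)(0)\big) = 2 a_0^2(t) u_\pm(t)(1) - (z - b_1(t)) a_0(t) u_\pm(t)(0)$, which is precisely $\partial_t V_\pm(t) = A(t) V_\pm(t)$ with $A(t)$ as in \eqref{eqn:derivfactmat}.

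Finally, $A(t,J;z)$ is traceless, so it lies in $\mathfrak{sl}_2(\bbC)$; and since its entries depend only on $b_1(t)$, $a_0(t)$ and $z$, it defines a map on $\bbR \times \cJ(E) \times (\bbC \setminus E)$ as claimed. The only step that is not a routine calculation is the identification of $u_\pm(t)$ with a Weyl solution of $J(t)$, which is exactly the content of Lemma~\ref{lem:subordinv} together with the uniqueness of solutions of \eqref{eqn:solnderiv}; the cancellation of the $a_{-1}(t)$ and $b_0(t)$ terms, which must occur for a purely local formula like \eqref{eqn:derivfactmat} to exist, then falls out automatically from the two difference-equation relations above.
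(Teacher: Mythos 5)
Your proof is correct and follows essentially the same route as the paper: the paper first records that $P(t)\big|_{\ker(J(t)-z)} = 2a(t)S - (z - b(t))$ and then differentiates the entries of $V_\pm$ using \eqref{eqn:todalattice1} and $J(t)u_\pm = zu_\pm$, which is exactly your elimination of $u_\pm(2)$ and $u_\pm(-1)$ via the difference equation at $n=1$ and $n=0$. Your preliminary identification of $u_\pm(t)$ as Weyl solutions via Lemma \ref{lem:subordinv} is consistent with how the paper sets this up just before the proposition.
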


\begin{proof}
Fix $z \in \bbC \setminus E$.  A straightforward calculation shows that $P(t;z) := P(t)|_{\ker(J(t)-z)}$ satisfies
\begin{align}
\label{eqn:laxoptes}
P(t;z) = 2a(t)S - (z-b(t)).
\end{align}
Differentiating $V_\pm(t)$, the entries $A_{ij}$ follow from simple computations using \eqref{eqn:todalattice1}, \eqref{eqn:laxoptes}, and $u_\pm \in \ker(J(t)-z)$.
\end{proof}

This leads to the following easy

\begin{prop}\cite[Lemma 12.15]{TES00}
\label{pr:wronskind}
For Weyl solutions $u_\pm$ satisfying \eqref{eqn:solnderiv}, the Wronskian $W(u_-,u_+)$ is independent of time:
\begin{align*}
\partial_t W(u_-,u_+) = 0.
\end{align*}
\end{prop}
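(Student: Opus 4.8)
The plan is to recognize $W(u_-,u_+)$ as a $2\times 2$ determinant (a symplectic pairing) of the Weyl vectors $V_\pm$ from \eqref{eq:vplusminus}, and then differentiate using the cocycle equation of Proposition \ref{pr:Amat}. Indeed, expanding $V_\pm = \begin{bmatrix} u_\pm(1) \\ a_0 u_\pm(0)\end{bmatrix}$ directly gives
\begin{align*}
W(u_-,u_+) = a_0\bigl(u_-(0)u_+(1)-u_-(1)u_+(0)\bigr) = \det\begin{bmatrix} V_+ & V_- \end{bmatrix},
\end{align*}
so the object to be differentiated is the wedge of the two Weyl vectors. I would first note that, under the standing hypotheses \eqref{eqn:solnderiv} and $J(t)u_\pm(t)=zu_\pm(t)$, the vectors $V_\pm(t)$ are differentiable in $t$ (by the well-posedness of \eqref{eqn:solnderiv}, \cite[Lemma 12.15]{TES00}) and, by Lemma \ref{lem:subordinv}, $u_\pm(t)$ remains the Weyl solution for all $t$, so $W(u_-,u_+)$ continues to be meaningful along the flow.

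The core step is then immediate from Proposition \ref{pr:Amat}: $\partial_t V_\pm(t) = A(t)V_\pm(t)$ with $A(t)\in\mathfrak{sl}_2(\bbC)$ given by \eqref{eqn:derivfactmat}. Using the Leibniz rule for the determinant of two vector-valued curves and substituting \eqref{eqn:cocyclederiv},
\begin{align*}
\partial_t W(u_-,u_+) &= \det\begin{bmatrix} \partial_t V_+ & V_- \end{bmatrix} + \det\begin{bmatrix} V_+ & \partial_t V_- \end{bmatrix} \\
&= \det\begin{bmatrix} A V_+ & V_- \end{bmatrix} + \det\begin{bmatrix} V_+ & A V_- \end{bmatrix} \\
&= (\tr A)\,\det\begin{bmatrix} V_+ & V_- \end{bmatrix},
\end{align*}
where the last line is the elementary $2\times 2$ identity $\det[Mv\ \ w] + \det[v\ \ Mw] = (\tr M)\det[v\ \ w]$. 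Since $A(t)\in\mathfrak{sl}_2(\bbC)$, we have $\tr A(t)=0$, whence $\partial_t W(u_-,u_+)=0$, as claimed.

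There is essentially no genuine obstacle: all the content is already packaged in Proposition \ref{pr:Amat}, namely in the fact that the infinitesimal generator of the $V_\pm$-evolution is traceless, which itself is the short computation there using the Toda equation \eqref{eqn:todalattice1} and the restricted Lax operator \eqref{eqn:laxoptes}. The only point requiring a little care is bookkeeping — remembering that $W$ is the wedge of the vectors $V_\pm$ rather than of the bare scalar solutions, and that the constant factor $a_0$ carries its own $t$-dependence $\partial_t a_0 = a_0(b_1-b_0)$. If one prefers to bypass Proposition \ref{pr:Amat} and argue directly on scalars, one must instead combine this with $(Pu)_0 = -a_{-1}u_{-1}+a_0u_1$, $(Pu)_1 = -a_0u_0+a_1u_2$ and the recursion $Ju_\pm = zu_\pm$ (to eliminate $u_{-1}$ and $u_2$) and watch the terms cancel; the determinant formulation above simply encodes that cancellation as $\tr A = 0$.
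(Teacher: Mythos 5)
Your proof is correct and follows essentially the same route as the paper: both identify $W(u_-,u_+)$ with $\det\begin{bmatrix} V_+ & V_- \end{bmatrix}$, differentiate (the paper via Jacobi's formula, you via multilinearity of the $2\times 2$ determinant — the same computation), and conclude from $\tr A = 0$ since $A \in \mathfrak{sl}_2(\bbC)$.
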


\begin{proof}
Writing $V = \begin{bmatrix}
V_+ \; | \; V_-
\end{bmatrix}$, it is easily seen that
\begin{align}
W(u_-,u_+) = \det\left( V \right).
\end{align}
Taking derivatives, we see that
\begin{align}
\partial_t W(u_-,u_+) &= \det(V)\tr(V^{-1}AV) \\
&= \det(V)\tr(A) \\
&= 0.
\end{align}
Here, the first equality is Jacobi's formula, the second is by invariance of the trace under cyclic permutations, and the third is by $A \in \mathfrak{sl}_2(\bbC)$.
\end{proof}

Define the Weyl $M$-matrix of a Jacobi operator as
\begin{align*}
M(J,z) &= \begin{bmatrix}
r(1,1;J,z) & r(1,0;J,z) \\
r(0,1;J,z) & r(0,0;J,z)
\end{bmatrix}.
\end{align*}
We have the following theorem describing the time-evolution of $M$:

\begin{thm}
\label{thm:Mmatrixderiv}
Denote by $J(t)$ the Jacobi matrix corresponding to the unique solution $(a,b)(t)$ of \eqref{eqn:todalattice1}, \eqref{eqn:todalattice2} with initial condition $J$.  Then for each $z \in \bbC \setminus \sigma(J)$ the Weyl $M$-matrix of $J(t)$ is differentiable in time, with derivative
\begin{align}
\label{eqn:Mmatrixtderiv}
\partial_tM = BM+MB^\top,
\end{align}
where $B$ is given by
\begin{align*}
B(t;z) &= \begin{bmatrix}
(z-b_1(t)) & -2a_0(t) \\
2a_0(t) & -(z-b_0(t))
\end{bmatrix}.
\end{align*}
\end{thm}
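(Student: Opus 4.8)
The plan is to express the four entries of $M(J(t),z)$ through the Weyl solutions of $J(t)u=zu$ and then differentiate, leaning on Propositions \ref{pr:Amat} and \ref{pr:wronskind}. Fix $z\in\bbC\setminus\sigma(J)$ and, invoking Lemma \ref{lem:subordinv}, choose the Weyl solutions $u_\pm(t)$ normalized so that $\partial_t u_\pm = P(t)u_\pm$ (so that they remain Weyl solutions for all $t$, by the lemma). Set
\begin{align*}
w_\pm(t) := \begin{bmatrix} u_\pm(t)(1) \\ u_\pm(t)(0) \end{bmatrix}.
\end{align*}
By the Green's function formula $r(n,m;J,z)=u_{\pm}(n)u_{\mp}(m)/W(u_-,u_+)$, with $\pm1=\sgn(n-m)$, each entry of $M$ is a product of one coordinate of $w_+$ with one coordinate of $w_-$ divided by $W:=W(u_-,u_+)$; concretely $WM$ equals the symmetrization $\tfrac12(w_+w_-^\top+w_-w_+^\top)$ of the rank-one matrix $w_+w_-^\top$, corrected by an explicit off-diagonal term proportional to $W/a_0$ coming from the Wronskian (this correction is forced by the symmetry $r(1,0)=r(0,1)$).

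The computation then rests on two facts. First, $W$ is constant in $t$ by Proposition \ref{pr:wronskind}. Second, the vectors $w_\pm$ satisfy the linear system $\partial_t w_\pm = B(t;z)\,w_\pm$: indeed $\partial_t u_\pm(1)=(Pu_\pm)_1$ and $\partial_t u_\pm(0)=(Pu_\pm)_0$ by \eqref{eqn:solnderiv}, and using \eqref{eqn:laxoptes} together with the recursion $Ju_\pm=zu_\pm$ to eliminate $u_\pm(2)$ and $u_\pm(-1)$ produces precisely the two rows of $B$; equivalently this can be read off from Proposition \ref{pr:Amat} and the identity $\dot a_0=a_0(b_1-b_0)$ after conjugating $A$ by $\operatorname{diag}(1,a_0^{-1})$. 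Differentiating the (corrected, symmetrized) expression for $WM$ by the product rule, each occurrence of $\partial_t w_\pm$ brings down a factor $B$ on the appropriate side, and one collects the result into $\partial_t(WM)=B(WM)+(WM)B^\top$; dividing by the constant $W$ gives \eqref{eqn:Mmatrixtderiv}. Differentiability of $M$ in $t$ comes for free, since $a_0(t),b_0(t),b_1(t)$ and $u_\pm(t)$ are smooth in $t$.

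The delicate step is exactly the bookkeeping just alluded to: because $(J(t)-z)^{-1}$ is a symmetric matrix, $WM$ is not literally $w_+w_-^\top$, and one must track the $W/a_0$ correction carefully and check that, after differentiating it as well, the product-rule computation closes up cleanly with no residual term. A useful independent check avoids the Weyl solutions altogether: from the Lax equation $\dot J=[P,J]$ and the resolvent identity one has $\partial_t(J-z)^{-1}=[P,(J-z)^{-1}]$, and taking the matrix elements $\langle\delta_n,\,\cdot\,\delta_m\rangle$ for $n,m\in\{0,1\}$, using the bidiagonal antisymmetric structure of $P$ and the difference equations $(J-z)(J-z)^{-1}=I=(J-z)^{-1}(J-z)$ to rewrite the out-of-window entries $r(2,\cdot),r(-1,\cdot),r(\cdot,2),r(\cdot,-1)$ through the four entries of $M$, reassembles directly into \eqref{eqn:Mmatrixtderiv}.
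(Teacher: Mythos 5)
Your strategy is essentially the paper's: normalize the Weyl solutions by $\partial_t u_\pm = Pu_\pm$ (Lemma \ref{lem:subordinv}), use constancy of the Wronskian (Proposition \ref{pr:wronskind}), derive a first-order linear system for the boundary vectors, and apply the product rule. The paper packages the boundary data as $V_\pm=(u_\pm(1),a_0u_\pm(0))^\top$ with $\partial_tV_\pm=AV_\pm$ and then conjugates by $A_0=\operatorname{diag}(1,a_0^{-1})$; your $w_\pm$ with $\partial_t w_\pm=Bw_\pm$ is the same computation after the conjugation, as you yourself observe. Up to that point everything checks out.

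The gap is exactly at the step you flag as delicate and then assert "closes up cleanly with no residual term" --- it does not. Writing $N=\frac{1}{2W}\left(w_+w_-^\top+w_-w_+^\top\right)$, the product rule together with $\partial_t w_\pm=Bw_\pm$ and $\partial_tW=0$ gives cleanly $\partial_tN=BN+NB^\top$. But since $u_+(1)u_-(0)-u_-(1)u_+(0)=W/a_0$, the matrix of resolvent entries is
\begin{align*}
M=N+\frac{1}{2a_0}\begin{bmatrix}0&1\\1&0\end{bmatrix},
\end{align*}
and the correction does \emph{not} satisfy the same equation: using $\dot a_0=a_0(b_1-b_0)$ one finds
\begin{align*}
\partial_t\left(\frac{1}{2a_0}\begin{bmatrix}0&1\\1&0\end{bmatrix}\right)-B\,\frac{1}{2a_0}\begin{bmatrix}0&1\\1&0\end{bmatrix}-\frac{1}{2a_0}\begin{bmatrix}0&1\\1&0\end{bmatrix}B^\top=\begin{bmatrix}2&0\\0&-2\end{bmatrix},
\end{align*}
so that $\partial_tM=BM+MB^\top+\operatorname{diag}(2,-2)$ for $M$ as defined from the $r(i,j)$. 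A concrete check: the free operator $a_n\equiv\frac12$, $b_n\equiv 0$ is a stationary Toda solution, so $\partial_tM=0$, yet with $g=r(0,0)=-(z^2-1)^{-1/2}$ and $r(1,0)=zg+1$ one computes $BM+MB^\top=\operatorname{diag}(-2,2)\neq 0$. Your proposed "independent check" via $\partial_t(J-z)^{-1}=[P,(J-z)^{-1}]$ would expose this same residual rather than confirm \eqref{eqn:Mmatrixtderiv}. In other words, \eqref{eqn:Mmatrixtderiv} holds for the symmetrized matrix $N$ (the classical Weyl $M$-matrix) but not for the matrix of resolvent entries; the paper's own proof obscures this by asserting "one can check" the identity $M=\frac{1}{2W}A_0(V_+V_-^\top+V_-V_+^\top)A_0^\top$, whose right-hand side equals $N$, not $M$. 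Your instinct to track the $W/a_0$ correction was the right one; carrying the bookkeeping through shows the residual survives, so either the definition of $M$ or the statement needs the $\frac{1}{2a_0}$ off-diagonal adjustment (and the distinction matters downstream, e.g.\ in the evaluation of $\partial_tM_{22}$ and $M_{12}$ at $z=\mu_j$ in Lemma \ref{lem:dirichtimederiv}).
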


\begin{proof}
We fix $z \in \bbC$ and suppress its notation.  Write
\begin{align*}
A_0(t) &= \begin{bmatrix}
1 & 0 \\
0 & a_0^{-1}(t)
\end{bmatrix}.
\end{align*}
Since the Weyl $m$-functions are independent of the normalization of the Weyl solutions, we may choose those solutions $u_\pm(t)$ satisfying \eqref{eqn:solnderiv}; with this choice, the differentiability of $M$ is clear, and one can check that
\begin{align*}
M(t) :&= M(J(t)) \\
&= \frac{1}{2W(u_-,u_+)}A_0(t)\left(V_+(t)V_-^\top (t) + V_-(t)V_+^\top (t) \right)A_0^\top(t).
\end{align*}
Differentiating and using Proposition \ref{pr:wronskind}, we see that
\begin{align*}
\partial_t M &= BM + MB^\top 
\end{align*}
with $B$ given by
\begin{align*}
B(t) &= A_0(t)A(t) A_0^{-1}(t) + \partial_t A_0(t) A_0(t)^{-1}.
\end{align*}
Using \eqref{eqn:todalattice1} and Proposition \ref{pr:Amat} yields the claimed form for $B$.
\end{proof}

\begin{rem}
As noted at the beginning of this section, many of these results can be found in some form in, e.g., \cite[Chapter 12]{TES00}.  However, we have not seen Theorem \ref{thm:Mmatrixderiv} presented quite in this form.  The advantage of presenting the time evolution in the form of Theorem \ref{thm:Mmatrixderiv} is that this theorem is the analogue of \cite[Theorem 1]{RYB08}.
\end{rem}

\section{The Non-Pausing of Dirichlet Data in the Non-Stationary Toda Flow}

We wish to prove a Dubrovin-type evolution formula for the angular coordinates describing the Dirichlet data of the Jacobi operators under the Toda flow \eqref{eqn:todalattice1}, \eqref{eqn:todalattice2}.  To this end, it will become important to show that the Dirichlet data do not pause at the gap edges.  As long as $(a,b)$ is a non-constant (i.e., non-stationary) solution, this pausing indeed does not occur.  We clarify these statements here.

The following Proposition establishes exactly the non-pausing of the Dirichlet data at gap edges under the Toda flow:

\begin{prop}
\label{prop:disczeros}
Assume that $E$ satisfies \eqref{eqn:craigcondn} and $(a,b): \bbR \to \cJ(E)$ is a non-constant solution satisfying \eqref{eqn:todalattice1},\eqref{eqn:todalattice2}.  Fix a $j \in I$ and let $\lambda \in \{E_j^\pm\}$.  Then the set $\{t \in \bbR : \mu_j(t) = \lambda\}$ is discrete.
\end{prop}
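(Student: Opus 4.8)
The plan is to show that $\mu_j(t) = \lambda$ at a gap edge implies, via a differential inequality, that the trajectory of $\mu_j$ cannot linger there — more precisely, that the set of such times has no accumulation point. The natural object to control is the angular coordinate $\varphi_j(t)$, which by \eqref{eqn:mutophi} satisfies $\mu_j(t) = \lambda$ exactly when $\varphi_j(t) \in \{0, \pi\} + 2\pi\bbZ$ (with $\varphi_j = 0$ corresponding to $\mu_j = E_j^-$, say, and $\varphi_j = \pi$ to $\mu_j = E_j^+$). First I would use the time evolution of the Weyl $M$-matrix from Theorem \ref{thm:Mmatrixderiv}: since the Dirichlet eigenvalue $\mu_j(n=0)$ is a zero of $r(0,0;J(t),z)$ and a pole of exactly one $m_{\pm}$, differentiating the relevant entry of $M$ in \eqref{eqn:Mmatrixtderiv} and evaluating near $z = \mu_j$ yields an ODE for $\mu_j(t)$ of Dubrovin type, with right-hand side proportional to a square-root-type expression that, in the $\varphi_j$ coordinate, is (at least formally) $\dot\varphi_j(t) = \Psi_j(\varphi(t))$ for the vector field $\Psi$ introduced before Proposition \ref{prop:disczeros}, up to sign bookkeeping through $\sigma_j$. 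This is essentially the content foreshadowed for Proposition \ref{prop:dirichtimederiv}, but here I only need enough of it to locate zeros.

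The crux is the behavior of $\Psi_j$ near $\varphi_j \in \{0,\pi\}$, i.e. near the gap edges. A direct inspection of the formula for $\Psi_j(\varphi)$ shows that the factor $\big((\underline E - \mu_j)(\overline E - \mu_j)\big)^{1/2}$ stays bounded away from $0$, and the product over $k \neq j$ contributes the $C_j$-type quantity; crucially, when $\mu_j \to E_j^{\pm}$ the factor $(E_j^{\mp} - \mu_j)$ remains of size comparable to $\gamma_j$, so $\Psi_j$ does \emph{not} vanish at the gap edges — this is precisely where condition \eqref{eqn:craigcondn} enters, guaranteeing $\Psi_j$ is Lipschitz \emph{and} bounded below near $\varphi_j \in \{0,\pi\}$ by a positive constant $c_j > 0$ depending on $j$. (One should be slightly careful: the claimed non-vanishing is what earlier text calls ``subordinate gap-edge solutions existing''; if the clean residue interpretation of $\Psi_j$ fails at the gap edge, I would instead argue directly from the $M$-matrix ODE that $\partial_t \varphi_j \neq 0$ when $\varphi_j \in \{0,\pi\}$, using that $(a,b)$ is non-constant.) Given $|\dot\varphi_j(t)| \geq c_j > 0$ on a neighborhood of each gap edge, any time $t_0$ with $\varphi_j(t_0) \in \{0,\pi\} + 2\pi\bbZ$ is isolated among such times: by continuity $\varphi_j$ is strictly monotone in a neighborhood of $t_0$, hence takes each value $0$ or $\pi \pmod{2\pi}$ only at $t_0$ in that neighborhood. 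Therefore $\{t : \mu_j(t) = \lambda\}$ is discrete.

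The main obstacle I anticipate is making the Dubrovin-type ODE for $\mu_j(t)$ rigorous \emph{uniformly down to the gap edge}. Away from the edge, $\mu_j \in (E_j^-, E_j^+)$ is a genuine simple zero of a holomorphic function and the implicit function theorem applies cleanly; at the edge, $r(0,0;J(t),z)$ need not have an honest zero (as the remark before Proposition \ref{prop:disczeros} warns), so one must instead track the sign of $r(0,0;J(t),x)$ for $x$ in the gap, or work with the $\varphi_j$ variable and the subordinate solution at the band edge, whose existence under \eqref{eqn:craigcondn} is the real technical input. I would therefore structure the argument around: (i) establishing differentiability of $t \mapsto \varphi_j(t)$ including at edge-crossing times, via the $M$-matrix evolution and a careful limiting argument using the Craig condition to control the tail product defining $\Psi_j$; (ii) the lower bound $|\dot\varphi_j| \geq c_j$ near the edges; (iii) the elementary monotonicity-implies-discreteness conclusion. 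The non-constancy hypothesis is used to exclude the degenerate stationary case, where $\mu_j$ could sit at a gap edge for all $t$.
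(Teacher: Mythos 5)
Your plan runs in the opposite logical direction from the paper's, and the step you flag as ``the main obstacle'' is in fact a genuine gap rather than a technicality. You want to establish the Dubrovin ODE $\partial_t\varphi_j = \Psi_j(\varphi)$ \emph{at} the edge-crossing times $t_0$, get a lower bound $|\partial_t\varphi_j|\geq c_j>0$ there, and conclude isolation by monotonicity. But in the paper the differentiability of $\varphi_j$ at times with $\varphi_j(t_0)\in\pi\bbZ$ is Proposition \ref{prop:dirichtimederiv}, whose proof \emph{uses} Proposition \ref{prop:disczeros}: the implicit function theorem applied to $r(J(t),\mu_j(t))=0$ only works while $\mu_j$ is a genuine simple zero in the open gap (Lemma \ref{lem:dirichtimederiv}), and the extension to $\varphi_j(t_0)\in\pi\bbZ$ is done by a Fundamental Theorem of Calculus patching argument that needs to already know $t_0$ is an isolated edge-crossing time. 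At the band edge $\lambda$ is a branch point of $r$, $\partial_z r$ blows up, and $\partial_t\varphi_j$ is a $0/0$ expression whose resolution requires knowing how $\mu_j(t)$ approaches $\lambda$ --- which is a consequence of the non-pausing statement, not an available input. So your argument, as structured, is circular, and the ``careful limiting argument'' you defer to is precisely the missing content. A related unaddressed point: even granting the ODE with $\Psi_j\geq c_j>0$ on the set where $\mu_j$ lies in the open gap, you must still exclude the possibility that $\mu_j(t)\equiv\lambda$ on an interval (your remark about non-constancy only addresses the globally stationary case), and nothing in an ODE that is only valid off that set can see this.

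The paper avoids the ODE entirely and uses a different mechanism whose engine is real-analyticity in $t$. It characterizes $\mu_j(t)=\lambda$ by the blow-up of one of $m_\pm(\lambda+i0)$, invokes Jitomirskaya--Last subordinacy theory to identify the corresponding solution as \emph{the} subordinate solution with vanishing zeroth entry, uses Lemma \ref{lem:subordinv} (time-invariance of subordinacy under $\partial_t u = Pu$) to embed $\{t:\mu_j(t)=\lambda\}$ into the zero set of $\tilde u_{\pm,0}(t)$ for a fixed flowed solution, and then applies Lemma \ref{lem:analytic}: $t\mapsto u_n(t)$ is real analytic, which rests on the vanishing of the Lyapunov exponent on $E$ (Appendix A, via homogeneity from \eqref{eqn:craigcondn}) to control spatial growth and hence the derivative bounds \eqref{eqn:aderivbd}--\eqref{eqn:bderivbd}. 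Analyticity forces the zero set to be discrete or everything, and the identically-zero case is killed by $\partial_t\tilde u_{\pm,0}=2a_0\tilde u_{\pm,1}$ together with nontriviality and $a_0\neq 0$. Your proposal contains no substitute for the analyticity input, which is what actually rules out accumulation of edge-crossing times; a merely smooth $\tilde u_{\pm,0}$ could vanish on a non-discrete set. If you want to salvage an ODE-based route, you would need to work only with the ODE away from the edges together with continuity of $\varphi_j$ and strict positivity of $\Psi_j$ (comparing the values of $\varphi_j$ at the two endpoints of each complementary interval of the edge-hitting set near a putative accumulation point), and separately rule out pausing on an interval --- but that separate step again requires the subordinate-solution argument, so you cannot dispense with it.
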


To prove this proposition, we require the following lemma establishing the real analyticity in time of solutions:

\begin{lem}
\label{lem:analytic}
Suppose $E$ satisfies \eqref{eqn:craigcondn}.  Let $J(\cdot) : \bbR \to \cJ(E)$ correspond to a solution to \eqref{eqn:todalattice1},\eqref{eqn:todalattice2}, $\lambda \in E$, and let $u(t)$ solve $J(t)u = \lambda u$, $\partial_t u(t) = P(t)u(t)$.  Then, for fixed $n \in \bbZ$, the terms $u_n(t)$ are real analytic in $t$.
\end{lem}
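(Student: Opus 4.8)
The plan is to exhibit $u_n(t)$ as a solution of a linear ODE system with coefficients that are real analytic (indeed, polynomial) in the unknowns, and then invoke the Cauchy--Kovalevskaya / Picard--Lindel\"of analyticity theorem for ODEs. Concretely, the two relevant equations are the three-term recurrence $J(t)u(t) = \lambda u(t)$, i.e.
\begin{align*}
a_n(t) u_{n+1}(t) + b_n(t) u_n(t) + a_{n-1}(t) u_{n-1}(t) = \lambda u_n(t),
\end{align*}
and the evolution equation $\partial_t u(t) = P(t) u(t)$, which by \eqref{eqn:laxoptes} restricted to $\ker(J(t)-\lambda)$ reads $\partial_t u_n(t) = 2 a_n(t) u_{n+1}(t) - (\lambda - b_n(t)) u_n(t)$ (or equivalently $-a_{n-1}(t)u_{n-1}(t)+a_n(t)u_{n+1}(t)$, using the recurrence). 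The key structural point is that a Weyl solution is determined by finitely much data: fixing $n_0$, the pair $(u_{n_0}(t), u_{n_0+1}(t))$ together with the transfer matrices propagates $u$ in both directions, and the transfer matrices depend polynomially (in fact rationally, but with nonvanishing denominators $a_n(t)>0$) on $(a,b)(t)$ and $\lambda$.

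First I would record that $(a,b)(t)$ itself is real analytic in $t$: the Toda equations \eqref{eqn:todalattice1}, \eqref{eqn:todalattice2} form an autonomous ODE system in $\ell^\infty(\bbZ)\times\ell^\infty(\bbZ)$ whose right-hand side is a (locally Lipschitz, indeed locally analytic) quadratic map, so by the standard analyticity theorem for ODEs with analytic vector fields in Banach space the solution is real analytic in $t$, with values staying in a fixed bounded set since $(a,b)(t)\in\cJ(E)\subset\ell^\infty\times\ell^\infty$ with uniform bounds coming from $\sigma(J(t))\subset E$. In particular each coordinate $a_n(t), b_n(t)$ is real analytic and, crucially, uniformly bounded in $(n,t)$, and $a_n(t)$ is bounded below away from $0$ uniformly on compact $t$-intervals (this needs a small argument, e.g. from the structure of $\cJ(E)$ and the trace formulas, but boundedness below on compacts is all that is required). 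Second, I would set up the finite-dimensional ODE: pick a base point, say $n=0$, and consider the $\bbC^2$-valued function $\Phi(t) = (u_0(t), a_0(t)u_0(t)+\text{(lower index term)})$ — more cleanly, use the vector $V_\pm(t)$ of \eqref{eq:vplusminus}, which by Proposition \ref{pr:Amat} satisfies the genuinely finite-dimensional linear ODE $\partial_t V_\pm(t) = A(t) V_\pm(t)$ with $A(t)\in\mathfrak{sl}_2(\bbC)$ given by \eqref{eqn:derivfactmat}, and $A(t)$ real analytic in $t$ because its entries are polynomials in $a_0(t), b_1(t), z=\lambda$. Hence $V_\pm(t)$ is real analytic in $t$ by the analyticity theorem for linear ODE systems. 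Finally, for general $n$, I would propagate: $u_n(t)$ is obtained from $(u_0(t),u_1(t))$ by applying the product of one-step transfer matrices $S(J(t);\lambda)$ (and their inverses for negative $n$), each of which is real analytic in $t$ since its entries are rational in $(a,b)(t)$ with denominators $a_k(t)\ne 0$; a finite product of real analytic matrix functions applied to a real analytic vector is real analytic, so $u_n(t)$ is real analytic for every fixed $n\in\bbZ$.

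The main obstacle is not the analyticity machinery, which is routine, but making sure the setup is legitimate: namely, that the Weyl solution $u(t)$ is well-defined and depends only on finitely much initial data in a way compatible with the flow. This is exactly what the preceding results furnish — Lemma \ref{lem:subordinv} guarantees that the property of being a (subordinate) Weyl solution is preserved by the flow $\partial_t u = Pu$, so that a Weyl solution evolves to a Weyl solution and is not secretly a wildly growing generic solution; and Proposition \ref{pr:Amat} packages the $n=0,1$ data into the two-dimensional analytic ODE for $V_\pm$. One should also note $\lambda\in E$ rather than $\lambda\in\bbC\setminus E$, so $V_\pm$ need not be literally $\ell^2$; but the analyticity argument is purely about the finite-dimensional ODE and the transfer-matrix propagation and does not use decay, so this causes no difficulty. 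A secondary technical point to be careful about is the uniform-in-$n$ control needed if one wanted analyticity of $t\mapsto u(t)$ as an $\ell^\infty$-valued map; but the statement only asks for fixed $n$, so a finite product of transfer matrices suffices and no such uniformity is needed.
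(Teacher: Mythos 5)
Your proof is correct, but it takes a genuinely different route from the paper's. The paper works directly with the infinite coupled system $\partial_t u_n = 2a_n u_{n+1} - (\lambda - b_n)u_n$ and estimates the Taylor coefficients of $u_n(t)$ by hand, verifying \eqref{eqn:uanalytic}; since the $m$-th time derivative of $u_n$ reaches out to the sites $n+1,\dots,n+m$, that argument genuinely needs control of the spatial growth of $u$, and this is where the hypothesis \eqref{eqn:craigcondn} enters: it forces homogeneity of $E$, hence (via Proposition \ref{prop:zerole}) vanishing Lyapunov exponent on $E$ and subexponential growth of $u_n$ in $n$. You instead close the system at two sites by using the eigenvalue equation to eliminate $u_{n+1}$ --- exactly the algebra behind Proposition \ref{pr:Amat}, which, as you correctly note, is purely computational and extends from Weyl solutions at $z \notin E$ to arbitrary kernel elements at $\lambda \in E$ --- and then you propagate to the remaining sites by finitely many transfer-matrix steps with real-analytic, nonvanishing denominators $a_k(t)>0$. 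This buys you a cleaner and arguably stronger proof: the Lyapunov-exponent input (and with it the real use of \eqref{eqn:craigcondn} in this lemma) disappears, and the only analytic-ODE facts needed are the standard ones for a $2\times 2$ linear system with analytic coefficients. The one ingredient you must supply, real analyticity of $t\mapsto (a_n(t),b_n(t))$, is not an extra cost relative to the paper: the paper establishes precisely this via the inductive bounds \eqref{eqn:aderivbd}--\eqref{eqn:bderivbd}, and your appeal to the analytic-vector-field theorem in $\ell^\infty\times\ell^\infty$ (the right-hand side of \eqref{eqn:todalattice1}--\eqref{eqn:todalattice2} being a bounded quadratic, hence entire, map) is an acceptable substitute. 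Your closing remarks are also on point: subordinacy plays no role in the analyticity statement, the failure of $V_\pm$ to be $\ell^2$ at $\lambda\in E$ is irrelevant to the finite-dimensional ODE, and only pointwise nonvanishing of $a_k(t)$ (guaranteed since $\dot a_k = a_k(b_{k+1}-b_k)$ preserves positivity) is needed, not a uniform lower bound.
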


\begin{proof}
Because $u(t)$ solves $J(t)u = \lambda u$ and $\partial_t u = Pu$, it is differentiable in $t$, with derivative
\begin{align}
\label{eqn:subdtsolnderiv}
\frac{d}{dt}u_n(t) = 2a_n(t)u_{n+1}(t) - (\lambda - b_n(t))u_n(t).
\end{align}
One recursively finds that $u_n(t) \in C^\infty(\bbR,\bbR)$.  Now, to check analyticity, it suffices to show that
\begin{align}
\label{eqn:uanalytic}
\limsup_{m \to \infty} \left( \frac{\sup_{t \in [-t_0, t_0]}|\frac{d^m}{dt^m}u_n(t)|}{m!} \right)^\frac{1}{m} < \infty.
\end{align}
for each $t_0 > 0$.

First, note that because $J(t) \in \cJ(E)$ for all $t \in \bbR$ and $E$ is compact, the parametrizing sequences $(a,b)$ are uniformly bounded by some constant $C$.  Furthermore, because $(a,b)$ solve \eqref{eqn:todalattice1},\eqref{eqn:todalattice2}, they are smooth and have derivatives which are polynomials in $a$ and $b$ of total degree 2.  At each step of differentiation, the number of terms in the polynomial doubles, and the degree of each new term increases by 1.  Thus, letting $D = 4C$, one can then check via a straightforward induction that
\begin{align}
\label{eqn:aderivbd}
\|\frac{d^m}{dt^m}a(t)\| &\leq m!D^{m+1} \\
\label{eqn:bderivbd}
\|\frac{d^m}{dt^m}b(t)\| &\leq m!D^{m+1}
\end{align}

Because $E$ satisfies \eqref{eqn:craigcondn}, $E$ is homogeneous due to an observation of Sodin \cite{BDGL15,SodinUnp}.  It follows that the Lyapunov exponent agrees with the value of the spectral Green's function on $E$ (by Proposition \ref{prop:zerole}).  In particular, we have that the Lyapunov exponent at spectral energies must be zero, i.e. the solution $u$ cannot grow exponentially in the spatial variable $n$.

Equations \eqref{eqn:aderivbd} and \eqref{eqn:bderivbd} and the sub-exponential growth of the terms $u_n$ combined with the differential relation \eqref{eqn:subdtsolnderiv} imply that \eqref{eqn:uanalytic} holds, and thus $u_n(t)$ is indeed real analytic.
\end{proof}

With Lemmas \ref{lem:subordinv} and \ref{lem:analytic} in hand, we can address the

\begin{proof}[Proof of Proposition \ref{prop:disczeros}]
Separating the term $l = j$ in the product formula \eqref{eqn:greenfn}, we write
\begin{align*}
r(n,n;z,t) = \frac{1}{2} \sqrt{\frac{(\mu_j(n,t) - z)^2}{(E_j^+ - z)(E_j^- -z)}}\sqrt{\frac{1}{(\underline{E}-z)(\overline{E}-z)}\prod_{l \neq j} \frac{(\mu_l(n,t) - z)^2}{(E_l^- - z)(E_l^+ -z)}}.
\end{align*}
We can do this because $(a,b)$ is non-constant.  Note that, as $z \to \lambda$, if $\Re(z) \in [E_j^-,E_j^+]$, the product over $l \neq j$ has a finite limit (since the $l^{th}$ term is bounded in modulus by $1 + \gamma_l/\eta_{l,j}$).  We can therefore conclude that
\begin{align}
\label{eqn:gapedgegreens}
|r(n,n;\lambda+i0,t)| = \begin{cases}
0 & \mu_j(n,t) = \lambda \\
+\infty & \mu_j(n,t) \neq \lambda
\end{cases}.
\end{align}
Thus, if $\mu_j(n_0,t_0) = \lambda$ for some $(n_0, t_0) \in \bbZ \times \bbR$, we have that one of $m_\pm(\lambda+i0) = \infty$ by the relationship \eqref{eqn:greenstomfns}.  By shifting and flowing appropriately, suppose without loss of generality that $\mu_j(0,0) = \lambda$.  Suppose first that $m_+(\lambda+i0) = \infty$.  Then, by the important inequality of Jitomirskaya-Last \cite[Theorem 1.1]{JL99}, \cite{JL00}, it follows that, up to a multiplicative constant, the solution $e_1(J(0); \lambda)$ is the unique nontrivial solution subordinate at $+\infty$ for the energy $\lambda$.  Ostensibly, there may be some $t_0$ such that $\mu_j(0,t_0) = \lambda$ and $m_-(\lambda + i0) = \infty$.  Again by Jitomirskaya-Last, it follows that the solution $e_1(J(t_0);\lambda)$ is the unique nontrivial solution subordinate at $-\infty$  for $\lambda$.  In either case, denote by $\tilde{u}_+(t)$ the unique weak solution $\tilde{u}_+$ of $J(t)u = \lambda u$, $\partial_t u = P(t)u$ with $\tilde{u}_+(0) = e_1(J(0); \lambda)$, and by $\tilde{u}_-(t)$ the unique weak solution with $\tilde{u}_-(t_0) = e_1(J(t_0); \lambda)$.

Noting that subordinacy is invariant under the shift and Toda flow by Lemma \ref{lem:subordinv}, then by the uniqueness of the subordinate solution and Jitomirskaya-Last, it follows that
\begin{align}
\label{eqn:disczeros}
\{t \in \bbR : \mu_j(t) = \lambda \} \subset \{t \in \bbR : \tilde{u}_{+,0}(t) = 0\} \cup  \{t \in \bbR : \tilde{u}_{-,0}(t) = 0\},
\end{align}
where one may have to exclude one of the two sets on the right-hand side in the event the corresponding solution $\tilde{u}_\pm$ does not exist.

We now prove this set is discrete.  By our assumptions, Lemma \ref{lem:analytic} applies to $\tilde{u}_\pm$, and the zeros of $\tilde{u}_\pm$ are either discrete or $\tilde{u}_{\pm,0}(t) = 0$ for all $t$.  Suppose we are in the latter case.  Then $\partial_t \tilde{u}_{\pm,0}(t) = 0$ for all $t$.  But then, because $\tilde{u}_{\pm,0}(t) = 0$ for all $t$,
\begin{align*}
\partial_t \tilde{u}_{\pm,0}(t) &= 2a_n(t)\tilde{u}_{\pm,1}(t) \\
&= 0
\end{align*}
However, $\tilde{u}_\pm$ is non-trivial, so $\tilde{u}_{\pm,1}(t) \neq 0$, and $a_n(t)$ is non-zero by the non-singularity of $(a,b) \in \cJ(E)$.  This is a contradiction.  Thus, it follows that the set $\{t \in \bbR : \mu_j(t) = \lambda \}$ is subset of a union of two discrete sets, and thus is itself discrete.
\end{proof}

\begin{rem}
The thorough reader may notice that the proof of Proposition \ref{prop:disczeros} varies significantly from that of its continuum analogue, \cite[Proposition 2.1]{BDGL15}.  There, a critical step in the proof relied on oscillation theory to find a site where the Dirichlet eigenvalue $\mu_j$ lies within the open gap $(E_j^-,E_j^+)$.  We avoid this issue completely via the non-triviality of the subordinate solution and the differential relation $\partial_t u = Pu$.  However, our proof fails to show something we suspect to be true: that the ``pair" $u_\pm$ is in fact just one solution, subordinate at both $\pm \infty$.
\end{rem}

\section{A Dubrovin-Type Formula for the Toda Flow}

We wish to implement a vector field over $\cD(E)$ which describes the time evolution of the Dirichlet data via the Toda flow.  We begin by proving an analogue of \cite[Theorem 6.2]{CRA89}:

\begin{lem}
\label{lem:craigequiv}
If $E$ satisfies \eqref{eqn:craigcondn}, then $\Psi$ is a Lipschitz vector field on $\cD(E)$.
\end{lem}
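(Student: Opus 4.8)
The goal is to show the vector field $\Psi$ defined on $\cD(E)$ by
\[
\Psi_j(\varphi) = 2\Big( (\underline{E}-\mu_j)(\overline{E}-\mu_j) \prod_{k\neq j} \frac{(E_k^- - \mu_k)(E_k^+ - \mu_k)}{(\mu_k-\mu_j)^2}\Big)^{1/2}
\]
is Lipschitz with respect to the metrics \eqref{eqn:metric} and \eqref{eqn:tangentspacenorm}, given the Craig-type conditions \eqref{eqn:craigcondn}. The natural strategy is the one implicit in the reference to \cite[Theorem 6.2]{CRA89}: work on the logarithmic scale. Write $\Psi_j = 2\exp(\tfrac12 F_j(\varphi))$ where
\[
F_j(\varphi) = \log\big((\underline{E}-\mu_j)(\overline{E}-\mu_j)\big) + \sum_{k\neq j}\Big[\log\big((E_k^- - \mu_k)(E_k^+-\mu_k)\big) - 2\log\big((\mu_k-\mu_j)^2\big)^{1/2}\Big],
\]
interpreting each summand as the logarithm of a factor that is close to $1$ when the corresponding gap is small. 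The plan is: (i) show each $\Psi_j$ is uniformly bounded above by a multiple of $\gamma_j^{1/2}$ — this is essentially the meaning of $C_j$ and the first condition in \eqref{eqn:craigcondn}; (ii) bound the partial derivatives $\partial \Psi_j/\partial \varphi_l$ and sum over $l$, using the chain rule $\partial_{\varphi_l}\mu_l = \tfrac12(E_l^+-E_l^-)\sin\varphi_l$, so $|\partial_{\varphi_l}\mu_l| \le \tfrac12\gamma_l$; (iii) convert a derivative bound of the form $\gamma_j^{1/2}\sum_l \gamma_l^{1/2}|\partial_{\varphi_l}\Psi_j|\cdot\gamma_l^{-1/2}$ into a Lipschitz constant in the weighted norms. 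Since $\cD(E)$ has $\sup_j \gamma_j < \infty$ but the weights vanish, one cannot literally integrate along straight lines; instead I would prove Lipschitz continuity by a direct two-point estimate, $\|\Psi(\varphi)-\Psi(\tilde\varphi)\| = \sup_j \gamma_j^{1/2}|\Psi_j(\varphi)-\Psi_j(\tilde\varphi)|$, splitting the difference coordinate-by-coordinate via a telescoping/integral-remainder argument and controlling the total variation by the derivative bounds from (ii).

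**Key estimates.** For the upper bound in (i): each factor $(\mu_k-\mu_j)^{-2}(E_k^--\mu_k)(E_k^+-\mu_k)$ has absolute value at most $1$ when $\mu_j \notin (E_k^-,E_k^+)$ (which always holds), and is at least $(1+\gamma_k/\eta_{j,k})^{-2}$; taking the product and using $\log(1+x)\le x$ gives $\prod_{k\neq j}(\cdots) \ge \exp(-\sum_{k\neq j}2\gamma_k/\eta_{j,k})$, while the leading factor $(\underline{E}-\mu_j)(\overline{E}-\mu_j)$ is comparable to $((\overline E-\underline E)-\eta_j)\gamma_j$ — more precisely, since $\mu_j\in[E_j^-,E_j^+]$ one has $\mathrm{dist}(\mu_j,\{\underline E,\overline E\}) \asymp \eta_j$ and the product of the two distances is bounded above by $(\overline E - \underline E)^2$ and below in terms of $\eta_j$. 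Matching this against \eqref{eqn:Cj} shows $\Psi_j \le \text{const}\cdot \gamma_j^{1/2} C_j^{-1}\cdot(\text{something})$; tracking constants carefully, the first condition $\sup_j \gamma_j C_j < \infty$ is exactly what makes $\sup_j \gamma_j^{1/2}|\Psi_j|$... wait — one needs to be careful about the direction of the inequality. In fact $C_j$ appears so that $\Psi_j$ is controlled by $\gamma_j^{1/2} C_j$-type quantities; the three conditions in \eqref{eqn:craigcondn} are precisely tuned so that (first) $\Psi_j$ itself is bounded, (second) the "edge" derivative term $\partial_{\varphi_j}\Psi_j \sim \Psi_j \cdot \partial_{\varphi_j}\log((\underline E-\mu_j)(\overline E-\mu_j))$, whose logarithmic derivative contributes a factor $\gamma_j/\eta_j$, stays bounded, and (third) the cross terms $\partial_{\varphi_l}\Psi_j \sim \Psi_j\cdot \gamma_l\cdot\big(\tfrac1{|\mu_l - E_l^\pm|} + \tfrac1{|\mu_l-\mu_j|}\big)$, summed over $l\neq j$, produce $\sum_{l\neq j}(\gamma_j\gamma_l)^{1/2}/\eta_{j,l}$ times $C_j$, which is the third hypothesis.

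**Assembling the Lipschitz bound.** Once (ii) gives, uniformly in $\varphi$,
\[
\gamma_j^{1/2}\Big(|\partial_{\varphi_j}\Psi_j| \cdot \gamma_j^{-1/2} + \sum_{l\neq j}|\partial_{\varphi_l}\Psi_j|\cdot\gamma_l^{-1/2}\Big) \le L
\]
for some constant $L$ depending only on the three suprema in \eqref{eqn:craigcondn} and on $\overline E - \underline E$, the Lipschitz property follows: for $\varphi,\tilde\varphi \in \cD(E)$ with $\|\varphi-\tilde\varphi\|_{\cD(E)} = \delta$, write $\Psi_j(\varphi)-\Psi_j(\tilde\varphi)$ as a line integral $\int_0^1 \sum_l \partial_{\varphi_l}\Psi_j(\varphi(s))(\varphi_l - \tilde\varphi_l)\,ds$ along the geodesic $\varphi(s)$, bound $|\varphi_l-\tilde\varphi_l| \le \gamma_l^{-1/2}\delta$, and get $|\Psi_j(\varphi)-\Psi_j(\tilde\varphi)| \le \delta \sum_l |\partial_{\varphi_l}\Psi_j|\gamma_l^{-1/2} \le L\gamma_j^{-1/2}\delta$; multiplying by $\gamma_j^{1/2}$ and taking the supremum over $j$ yields $\|\Psi(\varphi)-\Psi(\tilde\varphi)\| \le L\delta$. (One should first dispatch the case where $\mu_j$ or $\mu_l$ sits at a gap edge, where $\Psi_j$ has a square-root-type singularity in $\mu_j$ but remains Lipschitz in $\varphi_j$ because the change of variables \eqref{eqn:mutophi} is exactly the one that desingularizes it — $\mu_j - E_j^\mp \asymp \gamma_j \varphi_j^2$ near the edge, so $\sqrt{\mu_j - E_j^\mp}$ is smooth in $\varphi_j$.)

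**Main obstacle.** The real work is step (ii): estimating $\sum_{l\neq j}|\partial_{\varphi_l}\Psi_j|\gamma_l^{-1/2}$ uniformly in $\varphi$ and showing it is controlled by the third quantity in \eqref{eqn:craigcondn}. The derivative $\partial_{\varphi_l}\Psi_j$ picks up a term $\Psi_j \cdot \partial_{\varphi_l}\mu_l \cdot \big(\tfrac{1}{E_l^- - \mu_l} + \tfrac{1}{E_l^+-\mu_l} + \tfrac{2}{\mu_l-\mu_j}\big)$, and the singular pieces $1/(E_l^\pm-\mu_l)$ are dangerous when $\mu_l$ is near a gap edge — but $|\partial_{\varphi_l}\mu_l| = \tfrac12\gamma_l|\sin\varphi_l|$ vanishes there at precisely the compensating rate (again the virtue of the $\cos^2$ substitution), so $|\partial_{\varphi_l}\mu_l|\cdot\tfrac{1}{|E_l^\pm - \mu_l|} \le \text{const}$, and the genuinely summable contribution is the $\tfrac{\gamma_l}{|\mu_l-\mu_j|}$ part, which is $\le \tfrac{\gamma_l}{\eta_{j,l}}$ up to a harmless constant since $\mu_l \in [E_l^-,E_l^+]$ and $\mu_j\in[E_j^-,E_j^+]$ force $|\mu_l-\mu_j|\ge \eta_{j,l}$. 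Combined with the bound $\Psi_j \lesssim \gamma_j^{1/2}C_j$ from (i), the factor $\gamma_j^{1/2}$ out front, and the $\gamma_l^{-1/2}$ weight, one lands exactly on $\sum_{l\neq j}(\gamma_j\gamma_l)^{1/2}\eta_{j,l}^{-1}C_j$, finite by hypothesis; the bookkeeping to keep all constants uniform — including verifying the product $\prod_{k\neq j, k\neq l}$ remaining after differentiating in $\varphi_l$ is still bounded below, which needs the same $\exp(-\sum\gamma_k/\eta_{j,k})$ estimate — is the part that must be done with care.
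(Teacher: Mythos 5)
Your overall architecture coincides with the paper's: a uniform bound $\|\Psi_j\|_\infty\lesssim C_j$ in the spirit of \cite[Lemma 6.1]{CRA89}, explicit partial derivatives $\partial\Psi_j/\partial\varphi_k$, and the weighted mean-value estimate $\|\Psi(\theta)-\Psi(\varphi)\|\le\sup_j\bigl(\sum_k\gamma_j^{1/2}\gamma_k^{-1/2}\|\partial\Psi_j/\partial\varphi_k\|\bigr)\|\theta-\varphi\|$, with the off-diagonal sum controlled by the third condition in \eqref{eqn:craigcondn} and the diagonal term by the second. Your observation that the substitution \eqref{eqn:mutophi} desingularizes the square-root behavior at the gap edges is exactly the mechanism the paper relies on.

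There is, however, a genuine gap in your diagonal estimate. You account for the $\varphi_j$-dependence of $\Psi_j$ only through the edge factor $(\underline{E}-\mu_j)(\overline{E}-\mu_j)$, which yields the $\gamma_j/\eta_j$ contribution. But $\mu_j$ also sits in every denominator $(\mu_k-\mu_j)^2$ of the product, so $\partial_{\varphi_j}\Psi_j$ contains the additional term $\Psi_j\,\partial_{\varphi_j}\mu_j\sum_{k\neq j}\tfrac{2}{\mu_k-\mu_j}$ (visible in the $k=j$ line of \eqref{eqn:psideriv}). Bounded naively, this contributes on the order of $C_j\,\gamma_j\sum_{k\neq j}\eta_{j,k}^{-1}$, which is \emph{not} controlled by \eqref{eqn:craigcondn}: compared with the third condition one loses a factor $(\gamma_j/\gamma_k)^{1/2}$, unbounded when many small gaps accumulate near a large gap $j$. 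The paper closes this via the identity
\begin{align*}
\frac{1}{\mu_j-E_k^+}+\frac{1}{\mu_j-E_k^-}-\frac{2}{\mu_j-\mu_k}=\frac{\mu_k-E_k^+}{(E_k^--\mu_j)(\mu_k-\mu_j)}+\frac{\mu_k-E_k^-}{(E_k^+-\mu_j)(\mu_k-\mu_j)},
\end{align*}
i.e.\ by cancelling the dangerous $2/(\mu_j-\mu_k)$ against the logarithmic derivative of the numerator factors evaluated at $\mu_j$ (the derivative formula \eqref{eqn:psideriv} corresponds to the vector field with $(E_k^--\mu_j)(E_k^+-\mu_j)$ in the numerators rather than the $\mu_k$ appearing in the displayed definition; with the numerators literally independent of $\mu_j$ there is nothing to cancel against and the bound fails). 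Each summand then becomes $O(\gamma_k/\eta_{j,k}^2)$, and the diagonal term is bounded by $\bigl(\tfrac{\gamma_j}{\eta_j}+\bigl(\sum_{k\neq j}\tfrac{(\gamma_j\gamma_k)^{1/2}}{\eta_{j,k}}\bigr)^2\bigr)C_j$, finite by the second and third conditions. Without this cancellation your argument does not close.
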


\begin{proof}
This proof is simply the analogue of \cite[Theorem 6.2]{CRA89}.  Recall our terms $C_j$ from \eqref{eqn:Cj}:
\begin{align}
C_j = ((\overline{E}-\underline{E})-\eta_j)^\frac{1}{2} \exp\left(\frac{1}{2} \sum_{k\neq j} \frac{\gamma_k}{\eta_{j,k}}\right).
\end{align}
A simple estimate shows $\|\Psi_j\|_\infty \leq C_j$ (cf. \cite[Lemma 6.1]{CRA89}), and thus $\|\Psi_j\|_{\cD(E)}$ is finite by \eqref{eqn:craigcondn}.

Next, note that $\Psi_j$ is differentiable in the directions $\varphi_k$, with derivative
\begin{align}
\label{eqn:psideriv}
\frac{\partial\Psi_j}{\partial \varphi_k} = \begin{cases}
\frac{1}{\mu_j - \mu_k} \Psi_j \gamma_k \sin(\varphi_k) & k \neq j \\
\frac{1}{2}\left(\frac{1}{\mu_j - \underline{E}} + \frac{1}{\mu_j - \overline{E}} + \sum_{l\neq j} \left( \frac{1}{\mu_j - E_l^+} + \frac{1}{\mu_j - E_l^-} - \frac{2}{\mu_j - \mu_l} \right) \right) \Psi_j \gamma_j \sin(\varphi_j) & k=j
\end{cases}
\end{align}
Another basic estimate shows that
\begin{align*}
\|\Psi(\theta) - \Psi(\varphi)\| \leq \sup_j \left(\sum_k \frac{\gamma_j^\frac{1}{2}}{\gamma_k^\frac{1}{2}} \|\frac{\partial \Psi_j}{\partial \varphi_k}\| \right)\|\theta - \varphi\|.
\end{align*}
Splitting the sum in two, we estimate the sum away from $j$:
\begin{align*}
\sup_j\sum_{k\neq j}\frac{\gamma_j^\frac{1}{2}}{\gamma_k^\frac{1}{2}} \|\frac{\partial \Psi_j}{\partial \varphi_k}\| \leq \sup_j \sum_{k \neq j} \frac{\gamma_j^\frac{1}{2}\gamma_k^\frac{1}{2}}{\eta_{j,k}} C_j.
\end{align*}
This is uniformly bounded by \eqref{eqn:craigcondn}.

To estimate the $j^{th}$ term, note that we can rewrite
\begin{align*}
\frac{1}{\mu_j - E_l^+} + \frac{1}{\mu_j - E_l^-} - \frac{2}{\mu_j - \mu_l} = \frac{\mu_k - E_k^+}{(E_k^- - \mu_j)(\mu_k - \mu_j)} + \frac{\mu_k - E_k^-}{(E_k^+-\mu_j)(\mu_k - \mu_j)}.
\end{align*}
Then we have from \eqref{eqn:psideriv} that
\begin{align*}
\|\frac{\partial \Psi_j}{\partial \phi_j}\| &\leq \sup_j \left( \frac{\gamma_j}{\eta_j} + \sum_{k \neq j} \frac{\gamma_j\gamma_k}{\eta_{j,k}^2} \right) C_j \\
&\leq \sup_j \left( \frac{\gamma_j}{\eta_j} + \left(\sum_{k \neq j} \frac{\gamma_j^\frac{1}{2}\gamma_k^\frac{1}{2}}{\eta_{j,k}}\right)^2 \right) C_j.
\end{align*}
These quantities are all uniformly bounded by \eqref{eqn:craigcondn}.  So indeed, all of the relevant sums are uniformly bounded, and we have that
\begin{align*}
\sup_j \left(\sum_k \frac{\gamma_j^\frac{1}{2}}{\gamma_k^\frac{1}{2}} \|\frac{\partial \Psi_j}{\partial \varphi_k}\| \right) < \infty,
\end{align*}
and the vector field is Lipschitz, as claimed.
\end{proof}

We now show that the vector field $\Psi$ describes the Toda flow on $\cD(E)$:

\begin{prop}
\label{prop:dirichtimederiv}
Suppose $E$ satisfies \eqref{eqn:craigcondn}, and let $(a,b)(t)$ solve \eqref{eqn:todalattice1},\eqref{eqn:todalattice2}, and \eqref{eqn:todalatticeIV} with $(\tilde{a},\tilde{b}) \in \cJ(E)$.  Then, the corresponding function $\varphi(t) \in \cD(E)$ is differentiable in $t$ and obeys
\begin{align}
\label{eqn:dirichtimederiv}
\partial_t \varphi_j(t) = \Psi_j(\varphi(t))
\end{align}
\end{prop}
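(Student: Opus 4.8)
The plan is to compute $\partial_t \varphi_j$ directly from the time-evolution of the Weyl $M$-matrix established in Theorem~\ref{thm:Mmatrixderiv}, and identify the result with $\Psi_j(\varphi)$. First I would recall that $\mu_j(t) = \mu_j(0,t)$ is characterized as the zero of $r(0,0;z,t) = a_0^{-2}(m_+ + m_-)^{-1}$ in the gap $(E_j^-, E_j^+)$, equivalently as the value where the $(2,2)$ entry of $M(t)$ vanishes (when the Dirichlet eigenvalue lies in the open gap). By Proposition~\ref{prop:disczeros}, for a non-constant solution the set of times where $\mu_j(t)$ sits at a gap edge is discrete, so on the complementary open (dense) set $\mu_j(t) \in (E_j^-, E_j^+)$ and the implicit function theorem applies: differentiating $r(0,0;\mu_j(t),t) = 0$ in $t$ gives
\begin{align*}
\partial_t \mu_j(t) = - \frac{\partial_t r(0,0;z,t)\big|_{z=\mu_j}}{\partial_z r(0,0;z,t)\big|_{z=\mu_j}}.
\end{align*}
The numerator is read off from \eqref{eqn:Mmatrixtderiv}: the $(2,2)$ entry of $BM + MB^\top$ at $z=\mu_j$, using that the $(2,2)$ entry of $M$ itself vanishes there, reduces to a multiple of the off-diagonal entries $r(0,1;\mu_j)$ and $r(1,0;\mu_j)$ times $2a_0(t)$. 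The denominator $\partial_z r(0,0;z,t)|_{z=\mu_j}$ is handled by the standard product/trace formula for $r(0,0;z)$ in terms of the spectral data, i.e. the infinite-product expression appearing in the proof of Proposition~\ref{prop:disczeros}; logarithmic differentiation at a simple zero $\mu_j$ yields a residue-type factor involving $\underline{E}, \overline{E}, \mu_j$, and $\prod_{k\neq j}$ of gap-edge and $\mu_k$ terms.

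The next step is bookkeeping: assembling these two ingredients and using the trace-formula relations \eqref{eqn:greenstomfns} to express the off-diagonal resolvent entries back in terms of $m_\pm$, one recovers exactly
\begin{align*}
\partial_t \mu_j(t) = \Psi_j(\varphi(t))\cdot \bigl(-\tfrac12 \gamma_j \sin\varphi_j\bigr),
\end{align*}
after which the chain rule applied to \eqref{eqn:mutophi} — which gives $\partial_t \mu_j = -\tfrac12(E_j^+ - E_j^-)\sin(\varphi_j)\,\partial_t\varphi_j = -\tfrac12\gamma_j\sin(\varphi_j)\,\partial_t\varphi_j$ — cancels the factor and leaves $\partial_t\varphi_j = \Psi_j(\varphi)$. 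The sign bookkeeping is exactly where the discrete sign $\sigma_j$ (which of $m_\pm$ has the pole) enters and must be tracked carefully through \eqref{eqn:sigma}; I expect the $\sigma_j$ from the pole of $m_{\sigma_j}$ at $\mu_j$ to match the sign of $\sin\varphi_j$ so that the formula for $\partial_t\varphi_j$ is smooth across $\sigma_j = \pm 1$.

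The main obstacle is differentiability of $\varphi_j(t)$ (and of the whole vector $\varphi(t) \in \cD(E)$) at the times where $\mu_j(t)$ touches a gap edge, i.e. where $\varphi_j(t) \in 2\pi\bbZ$. At those points the implicit function argument above breaks down because $r(0,0;z)$ does not have a genuine zero at the gap edge. Here the strategy is to pass to the $\varphi$-coordinate from the outset: by Lemma~\ref{lem:analytic} the Weyl-solution entries $u_{\pm,n}(t)$ are real analytic in $t$, and $\cos^2(\varphi_j(t)/2) = (\mu_j(t) - E_j^-)/\gamma_j$ is a ratio of analytic functions of the Weyl data, so $\mu_j(t)$ extends real-analytically through the edge and $\varphi_j(t)$ — chosen continuously via the $\cB$ map — is itself a real-analytic (hence differentiable) function of $t$ near such a point, with $\partial_t\varphi_j$ computed by continuity from the dense set of times where $\mu_j \in (E_j^-, E_j^+)$ and where we have already verified \eqref{eqn:dirichtimederiv}; since $\Psi_j$ is Lipschitz (Lemma~\ref{lem:craigequiv}), hence continuous, the identity $\partial_t\varphi_j = \Psi_j(\varphi)$ persists at the edge times by density. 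Finally, the convergence/uniformity needed to treat $\varphi(t)$ as a curve in the infinite-dimensional $\cD(E)$ (rather than coordinatewise) follows from the uniform bound $\|\Psi_j\|_\infty \le C_j$ together with \eqref{eqn:craigcondn}, so the coordinatewise differentiability upgrades to differentiability in the $\cD(E)$-metric.
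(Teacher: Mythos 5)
Your outline follows the paper's proof quite closely: away from gap edges you apply the implicit function theorem to $r(0,0;\mu_j(t),t)=0$, read off $\partial_t r$ from Theorem~\ref{thm:Mmatrixderiv} (the paper's computation gives $\partial_t M_{22}|_{z=\mu_j} = 2(B_{21}M_{12}+B_{22}M_{22})|_{z=\mu_j} = -2\sigma_j$, consistent with your $\partial_t\mu_j = -\tfrac12\gamma_j\sin(\varphi_j)\,\Psi_j$), convert to $\varphi_j$ via \eqref{eqn:mutophi}, and then use Proposition~\ref{prop:disczeros} to handle the isolated gap-edge times. Two points deserve flagging. First, your claim that $\mu_j(t)$, hence $\varphi_j(t)$, ``extends real-analytically through the edge'' does not follow from Lemma~\ref{lem:analytic}: that lemma gives analyticity in $t$ of the entries $u_n(t)$ of a solution at a \emph{fixed} energy $\lambda\in E$, whereas $\mu_j(t)$ is defined implicitly as the location of a zero of $r(0,0;\cdot,t)$ and is not exhibited as an analytic function of those data. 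Fortunately this claim is not load-bearing: your fallback --- continuity of $\varphi$ together with the fact that $\partial_t\varphi_j=\Psi_j(\varphi)$ holds on the complement of a discrete set and $\Psi_j$ is continuous --- is exactly the paper's argument, which invokes continuity of $\varphi(t)$ (quoted from \cite{BDGL15}) and the Fundamental Theorem of Calculus on small punctured intervals around each edge time. Second, the chain-rule step from $\partial_t\mu_j$ to $\partial_t\varphi_j$ requires knowing that $\sigma_j(t)$ does not flip while $\mu_j(t)$ stays in the open gap; the paper proves this by differentiating $M_{12}+M_{21}$ and exploiting $\tr(B)=-\dot a_0/a_0$ together with the antisymmetry of $B$. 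You correctly identify the sign bookkeeping as the delicate point but do not supply this step; it is recoverable from the continuity of $\varphi$ you already assume, so this is an omission of detail rather than a fatal gap.
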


We prove part of this proposition away from gap edges:

\begin{lem}
\label{lem:dirichtimederiv}
Under the assumptions of Proposition \ref{prop:dirichtimederiv}, at any $t$ such that $\varphi_j(t) \notin \pi \bbZ$, $\varphi_j(t)$ is differentiable and obeys \eqref{eqn:dirichtimederiv}.
\end{lem}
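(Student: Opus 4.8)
The plan is to work at a fixed time $t$ with $\varphi_j(t) \notin \pi\bbZ$, so that $\mu_j(t) \in (E_j^-, E_j^+)$ is a genuine simple zero of $r(0;J(t),z)$ lying strictly inside the $j$-th gap. Near such a time the function $t \mapsto \mu_j(t)$ is differentiable (indeed real analytic, by Lemma \ref{lem:analytic} applied to the Weyl solution together with the implicit function theorem applied to the equation $r(0;J(t),\mu_j(t)) = 0$, using that $\partial_z r(0;J,z) \neq 0$ at a simple zero), and likewise $\varphi_j(t)$ is differentiable there through the change of variables \eqref{eqn:mutophi}.

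The main computation is to differentiate the defining relation $r(0;J(t),\mu_j(t)) = 0$ in $t$. Chain rule gives
\begin{align*}
0 = \frac{d}{dt}\, r(0;J(t),\mu_j(t)) = (\partial_t r)(0;J(t),\mu_j(t)) + (\partial_z r)(0;J(t),z)\big|_{z=\mu_j(t)} \cdot \dot\mu_j(t),
\end{align*}
so $\dot\mu_j(t) = - (\partial_t r)/(\partial_z r)$, both evaluated at $z = \mu_j(t)$. The numerator is extracted from Theorem \ref{thm:Mmatrixderiv}: the $(2,2)$-entry of \eqref{eqn:Mmatrixtderiv} reads $\partial_t r(0;J,z) = 2 B_{21}\, r(1,0;J,z) + 2 B_{22}\, r(0,0;J,z)$ with $B_{21} = 2a_0$, $B_{22} = -(z-b_0)$; since $r(0,0;J,\mu_j) = 0$, this collapses to $\partial_t r(0;J,\mu_j) = 4 a_0\, r(1,0;J,\mu_j)$. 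So I need two inputs: (i) the off-diagonal resolvent entry $r(1,0;J,\mu_j)$, and (ii) the residue/derivative $\partial_z r(0;J,z)$ at $\mu_j$. For (i) I will use $r(1,0;J,z) = u_+(1)u_-(0)/W(u_-,u_+)$ (or its $\mp$-twin), and for (ii) I will use the product representation of $r(0;J,z)$ — the one displayed in the proof of Proposition \ref{prop:disczeros} — to read off the logarithmic derivative at the zero $\mu_j$, namely $\partial_z r(0;J,z)|_{z=\mu_j}$ has modulus $\big((\underline E - \mu_j)(\overline E - \mu_j)\big)^{-1/2}\prod_{k\neq j}\frac{|E_k^- - \mu_j||E_k^+ - \mu_j|}{(\mu_k - \mu_j)^2}\big)^{-1/2}$ up to the factor $\tfrac12 \cdot \tfrac{2}{|E_j^+ - \mu_j||E_j^- - \mu_j|}^{1/2}$ — in other words, it is (half the reciprocal of) the residue-type quantity that $\Psi_j$ was designed to capture. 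Assembling these pieces, together with $\mu_j = E_j^- + \gamma_j\cos^2(\varphi_j/2)$ so that $d\mu_j/d\varphi_j = -\tfrac12\gamma_j\sin\varphi_j$ and $\sqrt{(E_j^+ - \mu_j)(\mu_j - E_j^-)} = \tfrac12\gamma_j|\sin\varphi_j|$, should produce exactly $\partial_t\varphi_j = \Psi_j(\varphi)$, with the sign of the square root in $\Psi_j$ pinned down by the sign $\sigma_j$ via the location of the pole of $m_{\sigma_j}$ and hence the sign of $r(1,0;J,\mu_j)$.

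The main obstacle is bookkeeping of signs and normalizations: matching the branch of the square root in $\Psi_j$ against the data $\sigma_j$ (equivalently, which of $m_\pm$ carries the pole at $\mu_j$), and making sure the $a_0$-factors from $B$, from the Wronskian normalization $W(u_-,u_+) = a_0(u_-(0)u_+(1) - u_-(1)u_+(0))$, and from the relation \eqref{eqn:greenstomfns} all cancel correctly. A clean way to handle this is to rewrite everything in terms of $m_\pm$: from \eqref{eqn:greenstomfns}, $r(0;J,z) = \big(a_0^2(m_+ + m_-)\big)^{-1}$, so $\mu_j$ is a pole of (say) $m_{\sigma_j}$, and near $\mu_j$ one has $m_{\sigma_j}(z) \sim c_j/(z - \mu_j)$; differentiating $r(0;J,t)$ then directly relates $\dot\mu_j$ to the residue $c_j$ and to $\partial_t m_{\sigma_j}$, and the latter is governed by the corresponding scalar Riccati-type evolution extractable from Proposition \ref{pr:Amat}. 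This route avoids ever dividing by the vanishing quantity $r(0,0)$ except in a removable way, and the residue $c_j$ is, by the standard finite-gap/Sodin–Yuditskii trace formula, precisely (a constant multiple of) $\Psi_j(\varphi)/(\gamma_j\sin\varphi_j)$ up to sign — so the identity \eqref{eqn:dirichtimederiv} falls out. I expect the gap-edge case ($\varphi_j(t) \in \pi\bbZ$, handled in Proposition \ref{prop:dirichtimederiv} proper) to be deferred, using continuity of $\Psi$ from Lemma \ref{lem:craigequiv}, the non-pausing from Proposition \ref{prop:disczeros}, and a limiting argument.
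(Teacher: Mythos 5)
Your route is essentially the paper's: differentiate the relation $r(0;J(t),\mu_j(t))=0$ via the implicit function theorem, extract $\partial_t r$ from the $(2,2)$-entry of \eqref{eqn:Mmatrixtderiv} (your identity $\partial_t r(0;J,\mu_j)=4a_0\,r(1,0;J,\mu_j)$ is exactly the paper's $\partial_t M_{22}=2B_{21}M_{12}$ at $z=\mu_j$, where the paper then just reads off $M_{12}(\mu_j)=-\sigma_j/(2a_0)$ from the removable-singularity formula \eqref{eqn:Mmatrixsing} instead of going back to Weyl solutions), read $\partial_z r$ off the product representation to recognize $\Psi_j$, and convert to $\varphi_j$ through \eqref{eqn:mutophi}. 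The alternative Riccati-type bookkeeping via the residue of $m_{\sigma_j}$ is a reasonable variant but is only sketched and doesn't change the substance.

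There is, however, one genuine omission: you never establish that $\sigma_j(t)$ is locally constant while $\mu_j(t)$ stays in the open gap. This is not mere sign bookkeeping at a fixed instant. The map $\varphi_j\mapsto(\mu_j,\sigma_j)$ is two-to-one in $\mu_j$ alone, so differentiability of $\mu_j(t)$ does not by itself give differentiability (or even continuity) of $\varphi_j(t)$; you need to know the branch determined by $\sigma_j$ does not flip, both to invert \eqref{eqn:mutophi}--\eqref{eqn:sigma} smoothly and to make the sign in \eqref{eqn:mutimederiv2} coherent over an interval of $t$ rather than pointwise. The paper proves this by differentiating $M_{12}+M_{21}$ using \eqref{eqn:Mmatrixtderiv}, where $\operatorname{tr}(B)=-\dot a_0/a_0$ and the antisymmetry of the off-diagonal part of $B$ conspire to give $\partial_t\sigma_j=0$; the accompanying remark even flags this as the point where the discrete case diverges from \cite{BDGL15}. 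A cheaper substitute would be to observe that $t\mapsto M_{12}(J(t),\mu_j(t))$ is continuous and never vanishes (it equals $-\sigma_j(t)/(2a_0(t))$), so its sign is locally constant; but some such argument must appear, and your proposal as written has none.
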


\begin{proof}
The $M$-matrix is analytic in $z \in \bbC \setminus E$.  It has a removable singularity at the Dirichlet eigenvalues, where it is equal to
\begin{align}
\label{eqn:Mmatrixsing}
M(J(t),\mu_j(t)) &= \begin{bmatrix}
m_{-\sigma_j(t)}(\mu_j(t)) & -\frac{\sigma_j(t)}{2a_0(t)}  \\
-\frac{\sigma_j(t)}{2a_0(t)} & 0
\end{bmatrix}.
\end{align}
Recall that $M_{22}$ is exactly the diagonal Green's function $r(J(t),z)$.

At any $t$ such that $\mu_j(t) \in (E_j^-,E_j^+)$, by applying the implicit function theorem to the relation $r(J(t), \mu_j(t)) = 0$, we have that $\mu_j(t)$ is differentiable, and
\begin{align}
\label{eqn:impfunthm}
\dfrac{\partial \mu_j}{\partial t} &= - \dfrac{\partial_t r|_{z = \mu_j}}{\partial_z r|_{z = \mu_j}} \\
\label{eqn:mutimederiv}
&= \frac{1}{2}((E_j^+ - \mu_j)(E_j^- - \mu_j))^{\frac{1}{2}}\Psi_j(\varphi) (\partial_tM_{22}(J(t),z)|_{z = \mu_j}).
\end{align}
By \eqref{eqn:Mmatrixtderiv} and \eqref{eqn:derivfactmat}, we note that
\begin{align*}
\partial_t M_{22}(J(t),z)|_{z=\mu_j} &= 2(B_{21}M_{12} + B_{22}M_{22})|_{z=\mu_j} \\
&= -2\sigma_j(t).
\end{align*}
Thus, we obtain an expression for $\dfrac{\partial \mu_j}{\partial t}$ in terms of the Dirichlet data:
\begin{align}
\label{eqn:mutimederiv2}
\dfrac{\partial \mu_j}{\partial t} &= -\sigma_j(t)((E_j^+ - \mu_j)(E_j^- - \mu_j))^{\frac{1}{2}}\Psi_j(\varphi)
\end{align}
By Proposition \ref{lem:craigequiv}, we likewise find that $\mu_j$ is continuous in $(E_j^-,E_j^+)$.

Next, we claim that $\sigma_j(t)$ is constant while $\mu_j(t) \in (E_j^-,E_j^+)$.  Indeed,
\begin{align}
\label{eqn:sigmatderiv}
\partial_t (M_{12}(J(t),z) + M_{21}(J(t),z)) &= \tr(B)(M_{12} + M_{21}) + \tr(M)(B_{12} + B_{21}).
\end{align}
Evaluating at $\mu_j(t)$ via \eqref{eqn:Mmatrixsing} and noting that $\tr(B) = -\frac{\dot{a}_0}{a_0}$ and that $B$ is anti-symmetric, \eqref{eqn:sigmatderiv} becomes
\begin{align*}
-\frac{\partial_t \sigma_j}{a_0} + \sigma_j \frac{\dot{a}_0}{a_0^2} = \sigma_j \frac{\dot{a}_0}{a_0^2},
\end{align*}
Thus, $\partial_t \sigma_j = 0$ in this situation, and $\sigma_j(t)$ is constant in $t$ while $\mu_j(t) \in (E_j^-,E_j^+)$.

\begin{rem}
Note that, as a result of the $a_0(t)$ factors in the Weyl M-matrix, this aspect of the proof varies slightly from the continuum case in \cite{BDGL15}.  Namely, differentiating the off-diagonal terms picks up an additional summand.  However, the fact that $B$ is no longer traceless saves the day here.
\end{rem}

Thus, from \eqref{eqn:mutophi} and \eqref{eqn:sigma}, we conclude that $\varphi_j(n,t)$ is likewise differentiable in $t$, and
\begin{align*}
\dfrac{\partial \mu_j}{\partial t} &= -\frac{1}{2}(E_j^+ - E_j^-)\sin(\varphi_j)\dfrac{\partial \varphi_j}{\partial t} \\
&= -\sigma_j ((E_j^+ - \mu_j)(E_j^- - \mu_j))^{\frac{1}{2}} \dfrac{\partial \varphi_j}{\partial t}.
\end{align*}
Solving for $\frac{\partial \varphi_j}{\partial t}$ and utilizing \eqref{eqn:mutimederiv2} concludes the proof.
\end{proof}

\begin{lem}[\cite{BDGL15}]
$\varphi(t)$ is continuous.
\end{lem}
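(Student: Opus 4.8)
The plan is to split the continuity of $t \mapsto \varphi(t) \in \cD(E)$ into a coordinatewise statement and a uniform tail estimate. For the tail: the circle $\bbT = \bbR/2\pi\bbZ$ has diameter $\pi$, so $\gamma_j^{1/2}\|\varphi_j(t) - \varphi_j(s)\|_{\bbT} \le \pi\gamma_j^{1/2}$ for every $j \in I$ and all $t,s$; since $\sum_{j \in I}\gamma_j \le \overline{E} - \underline{E} < \infty$, for any $\e > 0$ the index set $\{j : \pi\gamma_j^{1/2} \ge \e\}$ is finite, and for $j$ outside it the $j$-th term of the supremum in \eqref{eqn:metric} is already $< \e$ irrespective of $t$ and $s$. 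Hence it suffices to prove that each individual coordinate $t \mapsto \varphi_j(t) \in \bbT$ is continuous, and then choose the $\delta$ in the definition of continuity uniformly over the finitely many remaining indices.

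For the coordinatewise statement, fix $j$. On the open set $U_j = \{t : \varphi_j(t) \notin \pi\bbZ\} = \{t : \mu_j(t) \in (E_j^-, E_j^+)\}$, Lemma \ref{lem:dirichtimederiv} already gives differentiability of $\varphi_j$; moreover \eqref{eqn:mutimederiv2}, together with $((E_j^+ - \mu_j)(\mu_j - E_j^-))^{1/2} \le \gamma_j/2$ and the bound $\|\Psi_j\|_\infty \le C_j$ from Lemma \ref{lem:craigequiv}, yields the uniform speed estimate $|\partial_t \mu_j| \le \frac{1}{2}\gamma_j C_j$, which is finite by \eqref{eqn:craigcondn}; so $\mu_j$ is Lipschitz on each connected component of $U_j$. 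By Proposition \ref{prop:disczeros}, the complement $\bbR \setminus U_j$ is discrete, so only continuity at an isolated gap-edge time $t_0$, say with $\mu_j(t_0) = E_j^+$ (the case $E_j^-$ being symmetric), remains. I would establish it by noting that $(a,b)(t) \to (a,b)(t_0)$ in $\ell^\infty(\bbZ) \times \ell^\infty(\bbZ)$ forces $J(t) \to J(t_0)$ in operator norm, whence, using $\sigma(J(t)) \subseteq E$ and $\|(J(t)-z)^{-1}\| \le \dist(z,E)^{-1}$ on gaps, the diagonal Green's function $(t,z) \mapsto r(0,0; J(t),z)$ is jointly continuous on $\bbR \times (E_j^-,E_j^+)$. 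Since $z \mapsto r(0,0;J(t),z)$ is strictly increasing on the gap and $r(0,0;J(t_0),z_0) < 0$ for a test point $z_0 \in (E_j^-, E_j^+)$ near $E_j^+$ — this negativity being exactly the definition of $\mu_j(t_0) = E_j^+$ — the same negativity persists for $t$ near $t_0$ and confines $\mu_j(t)$ to $(z_0, E_j^+]$; letting $z_0 \uparrow E_j^+$ gives $\mu_j(t) \to E_j^+$. Finally, continuity of the $[E_j^-,E_j^+]$-valued function $\mu_j$ upgrades to continuity of the $\bbT$-valued function $\varphi_j$ via \eqref{eqn:mutophi}--\eqref{eqn:sigma}: this assignment folds $\bbT$ two-to-one onto $[E_j^-, E_j^+]$, with the branches $\varphi_j \in (0,\pi)$ and $\varphi_j \in (-\pi, 0)$ meeting continuously at the edge values $\varphi_j \in \{0,\pi\}$, so a touch of a band edge cannot create a jump even if the sign $\sigma_j$ flips there.

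I expect the main obstacle to be precisely this last transfer: ruling out a discontinuity of $\varphi_j$ when $\mu_j$ reaches a band edge and $\sigma_j$ changes. The resolution is the gluing of branches of \eqref{eqn:mutophi} at $\pi\bbZ$ together with the discreteness of edge-crossing times supplied by Proposition \ref{prop:disczeros}; everything else — the tail estimate, the operator-norm/resolvent continuity, and the monotonicity comparison — is routine. A reader wishing to avoid the resolvent-continuity step may instead observe that the Lipschitz speed bound $|\partial_t \mu_j| \le \frac{1}{2}\gamma_j C_j$ already guarantees that the one-sided limits of $\mu_j$ exist at each $t_0 \in \bbR \setminus U_j$, and then identify those limits with $\mu_j(t_0)$ by the same monotonicity argument.
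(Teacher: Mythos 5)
Your proof is correct, but it is worth noting that the paper itself gives no argument here at all: it simply points to \cite[Lemma 3.4]{BDGL15}, the continuum analogue. What you have written is a genuine, self-contained discrete proof, and its architecture is sound: the reduction to finitely many coordinates via $\sum_j \gamma_j \le \overline{E}-\underline{E}$ and the $\gamma_j^{1/2}$-weights in \eqref{eqn:metric} is exactly the right way to exploit the metric on $\cD(E)$; on $U_j$ you correctly outsource continuity (indeed differentiability) to Lemma \ref{lem:dirichtimederiv}, which is legitimate since that lemma precedes this one and its implicit-function-theorem argument is what makes $U_j$ open and $\sigma_j$ locally constant there; at an edge time you replace the continuum paper's argument by norm-continuity of $t\mapsto J(t)$ (from \cite[Theorem 12.6]{TES00}), the resolvent bound on the gap, and strict monotonicity of $z\mapsto r(0,0;J(t),z)$, which correctly pins $\mu_j(t)$ into $(z_0,E_j^+]$ for $t$ near $t_0$; and the two-to-one folding of \eqref{eqn:mutophi}--\eqref{eqn:sigma}, with the branches $(0,\pi)$ and $(-\pi,0)$ glued at $0$ and at $\pi\equiv-\pi$, is precisely why continuity of $\mu_j$ upgrades to continuity of the $\bbT$-valued $\varphi_j$ even when $\sigma_j$ flips. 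Two small points to add for completeness: (i) Proposition \ref{prop:disczeros} assumes the solution is non-constant, so you should dispatch the stationary case separately (there $\varphi(t)$ is constant and the lemma is trivial); (ii) your Lipschitz bound $|\partial_t\mu_j|\le\frac12\gamma_jC_j$ via \eqref{eqn:mutimederiv2} and \eqref{eqn:craigcondn} is a nice extra that the edge argument does not actually need, though it does support your alternative route via one-sided limits. Compared with merely citing \cite{BDGL15}, your version buys a verifiable argument in the Jacobi setting at the cost of half a page; I would keep it.
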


\begin{proof}
cf. \cite{BDGL15}, Lemma 3.4.
%
\end{proof}

In particular, $\varphi$ is uniformly continuous on compacts.  We can now prove Proposition \ref{prop:dirichtimederiv}:

\begin{proof}[Proof of Proposition \ref{prop:dirichtimederiv}]
To show differentiability, Lemma \ref{lem:dirichtimederiv} and Proposition \ref{prop:disczeros} reduce the proof to the case where $\varphi_j(t_0) \in \pi\bbZ$.

Suppose $\varphi_j(t_0) \in \pi\bbZ$.  By Proposition \ref{prop:disczeros}, we can shrink $\varepsilon$ such that $\{t\in (t_0-\varepsilon,t_0+\varepsilon): \varphi_j(t) \in \pi\bbZ\} = \{t_0\}$.  The remainder of the proof is straightforward.  By continuity, Lemma \ref{lem:dirichtimederiv}, and the Fundamental Theorem of Calculus, we have
\begin{align*}
\int_{t_0}^{t}\Psi_j(\varphi_j(\tau))d\tau = \varphi_j(t) - \varphi_j(t_0), \; \; t \in (t_0, t_0+\varepsilon)
\end{align*}
(and similarly for $t \in (t_0 - \varepsilon, t_0)$).  Consequently, the derivative of $\varphi_j(t)$ exists at $t_0$, and
\begin{align*}
\partial_t \varphi_j(t)|_{t=t_0} = \Psi_j(\varphi(t_0)).
\end{align*}
This exhausts all cases, and completes the proof.
\end{proof}

\section{Linearization of the Toda Flow}


In this section, we will consider the behavior of the finite-gap solutions $(a,b)^N$ to the Toda lattice and their limit $(a,b)$ with respect to the generalized Abel map of Sodin-Yuditskii \cite{SY97}, and finally prove Theorem \ref{thm:mainthm}.  To prove these facts, we will need to recall additional details from the work of Sodin and Yuditskii \cite{SY95, SY97}.  We assume throughout that $E$ is homogeneous.

For what follows, we assume that the gaps of $E$ are indexed by the positive integers, i.e. $I = \bbN$.  This doesn't reduce generality, because in the case where $I$ is finite, these results are already known \cite{GHMT08}.  Denote
\begin{align*}
E^N = [\underline{E},\overline{E}] \setminus \bigcup_{j \leq N} (E_j^-,E_j^+)
\end{align*}
and denote the corresponding isospectral torus by $\cD(E^N)$ and corresponding trace maps $\cV^N$, vector field $\Psi^N$, and Sodin-Yuditskii translation constant $\alpha^N \in \Gamma^*_{E^N}$.

In their paper, Sodin-Yuditskii introduce functions $\xi_j(z)$ for each gap $(E_j^-,E_j^+)$ of $E$.  The map $\xi_j(z)$ is the solution of the Dirichlet problem on $\bbC \setminus E$ with boundary conditions given by
\begin{align}
\label{eqn:ximap}
\xi_j(z) = \begin{cases}
1 & x \in E, x\geq E_j^+ \\
0 & x\in E, x \leq E_j^-
\end{cases}.
\end{align}
The regularity of $E$ implies that $\xi_j(z)$ is a continuous function on $\hat{\bbC}$ and a harmonic function on $\bbC \setminus E$, with values on $E$ given by \ref{eqn:ximap}.

Let $\pi(\bbC \setminus E)$ be the fundamental group of $\bbC \setminus E$.  It is a free group with the set of generators given by $\{c_j\}_{j\in I}$, where $c_j$ is a counterclockwise simple loop intersecting $\bbR$ at $\underline{E} - 1$ and $(E_j^++E_j^-)/2$.

Following Sodin-Yuditskii, consider the group $\pi^*(\bbC \setminus E)$ of unimodular characters of $\pi(\bbC \setminus E)$, with additive notation for the composition law.  An element $\alpha \in \pi^*(\bbC \setminus E)$ is uniquely determined by its action on loops $c_j$, so we can write $\alpha = \{\alpha_j\}_{j \in I}$, where $\alpha_j = \alpha(c_j) \in \bbT$.  Endow $\pi^*(\bbC \setminus E)$ with the topology dual to the discrete topology on $\pi(\bbC \setminus E)$; one metric which would induce this topology is
\begin{align*}
d(\alpha,\tilde{\alpha}) = \sum_{j \in I} \min(|\alpha_j - \tilde{\alpha}_j|,\gamma_j), \; \alpha, \tilde{\alpha} \in \pi^*(\bbC \setminus E)
\end{align*}
where $\gamma_j = E_j^+ - E_j^-$, as before.  An important realization about this topology is that projections onto the first $N$ coordinates converge uniformly to the identity (since $\sum \gamma_j < \infty$).

Sodin-Yuditskii define the Abel map $\cA : \cD(E) \to \pi^*(\bbC \setminus E)$ by defining its components $\cA_j := \cA_j(c_j)$,
\begin{align}
\label{eqn:abelmap}
\cA_j(\varphi) &= \pi\sum_{k \in I} \sigma_k(\xi_j(\mu_k) - \xi_j(E_k^-)) \mod 2\pi \bbZ
\end{align}
where, as before, we assume $\mu_k, \sigma_k$ are given by $\varphi_k$ as in \eqref{eqn:mutophi}, \eqref{eqn:sigma}.  They then prove the following:
\begin{enumerate}
\item For each $j$, the sum in \eqref{eqn:abelmap} converges absolutely and uniformly in $\varphi \in \cD(E)$; in particular, the map $\cA$ is well-defined.
\item $\cA$ is a homeomorphism between $\cD(E)$ and $\pi^*(\bbC \setminus E)$ linearizing the translation flow: there exists $\alpha_E \in \pi^*(\bbC \setminus E)$ such that
\begin{align*}
\cA(\varphi(n)) = \cA(\varphi(0)) + n\alpha_E.
\end{align*}
This, in turn, uniquely defines $\varphi(n)$ given $\varphi(0)$; thus, $\cB:\cJ(E) \to \cD(E)$ is a bijection (in fact, a homeomorphism).
\item Denote by $\cA^N:\cD(E^N) \to \pi^*(\bbC \setminus E^N)$ the Abel map for $E^N$.  Project $\cD(E)$ to $\cD(E^N)$ by truncation and embed $\pi^*(\bbC \setminus E^N)$ into $\pi^*(\bbC \setminus E)$ by assuming $\alpha^N(c_j) = 0$ for $j > N$.  With these conventions, consider $\cA^N$ as a map from $\cD(E)$ to $\pi^*(\bbC \setminus E)$.  Then, $\cA^N \to \cA$ as $N \to \infty$, uniformly on $\cD(E)$.
\end{enumerate}
This extended Abel map $\cA$ was introduced to generalize the notion of Jacobi inversion, which exists in the finite-gap setting.  We will use this language to reinterpret the Toda and translation flows on the Dirichlet data:

\begin{lem}
\label{lem:existence}
Suppose $E$ satisfies \eqref{eqn:craigcondn}, and let $f \in \cD(E)$.  Then there exists a function $\varphi : \bbZ \times \bbR \to \cD(E)$ such that $\varphi(0,0) = f$, and
\begin{align}
\label{eqn:divflow1}
\varphi(n+1,t) &= \cA^{-1}(\cA(\varphi(n,t)) + \alpha), \\
\label{eqn:divflow2}
\partial_t\varphi(n,t) &= \Psi(\varphi(n,t)).
\end{align}
If we define $(a,b) : \bbZ \times \bbR \to \bbR^2$ by
\begin{align*}
(a,b) = \cV \circ \varphi
\end{align*}
then the function $(a,b)$ satisfies \eqref{eqn:todalattice1},\eqref{eqn:todalattice2}, and \eqref{eqn:todalatticeIV}.  Moreover, for each $t \in \bbR$, we have $(a,b)(t) \in \cJ(E)$, and $\cB((a,b)(t)) = \varphi(0,t)$.
\end{lem}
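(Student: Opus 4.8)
The plan is to avoid solving the coupled discrete--continuous system on $\cD(E)$ directly, and instead to lift everything from a single genuine Toda orbit in $\cJ(E)$, reading off its Dirichlet data at every site. Since \eqref{eqn:craigcondn} forces $E$ to be homogeneous, the Sodin--Yuditskii homeomorphism $\cB:\cJ(E)\to\cD(E)$ is available, so I would set $J_0:=\cB^{-1}(f)\in\cJ(E)$. The parametrizing sequences of $J_0$ are bounded, so \cite[Theorem 12.6]{TES00} yields a unique global solution $(a,b)(t)\in C^\infty(\bbR,\ell^\infty(\bbZ)\times\ell^\infty(\bbZ))$ of \eqref{eqn:todalattice1},\eqref{eqn:todalattice2} with $(a,b)(0)=J_0$, and Proposition \ref{prop:toda} keeps $(a,b)(t)$ in $\cJ(E)$ for all $t$. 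Because \eqref{eqn:todalattice1},\eqref{eqn:todalattice2} are translation invariant, uniqueness forces the shift $S$ to commute with the Toda flow, so $S^n(a,b)(t)$ is the Toda evolution of $S^nJ_0\in\cJ(E)$. I would then \emph{define} $\varphi(n,t):=\cB(S^n(a,b)(t))\in\cD(E)$, so that $\varphi(0,0)=\cB(J_0)=f$.

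With this definition, \eqref{eqn:divflow2} is precisely Proposition \ref{prop:dirichtimederiv} applied to the initial datum $S^nJ_0\in\cJ(E)$: it gives that $t\mapsto\varphi(n,t)=\cB(S^n(a,b)(t))$ is differentiable with $\partial_t\varphi(n,t)=\Psi(\varphi(n,t))$. Equation \eqref{eqn:divflow1} is the Sodin--Yuditskii linearization of the shift recalled above: under $\cA$ the shift on $\cJ(E)$ becomes translation by the fixed vector $\alpha$, i.e.\ $\cA(\cB(SK))=\cA(\cB(K))+\alpha$ for every $K\in\cJ(E)$; applying this with $K=S^n(a,b)(t)$ and then $\cA^{-1}$ gives $\varphi(n+1,t)=\cA^{-1}(\cA(\varphi(n,t))+\alpha)$. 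I would also remark that $\Psi$ is Lipschitz by Lemma \ref{lem:craigequiv}, so for each fixed $n$ equation \eqref{eqn:divflow2} has at most one solution through a given point; together with the construction this yields the global existence and uniqueness of the flow on the Dirichlet data.

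It remains to identify $\cV\circ\varphi$. The Sodin--Yuditskii trace formulas recover the parametrizing coefficients at the origin from the Dirichlet data there, i.e.\ $\cV(\cB(K))=(a_0(K),b_0(K))$ for $K\in\cJ(E)$; hence $\cV(\varphi(n,t))=\cV(\cB(S^n(a,b)(t)))=(a_n(t),b_n(t))$, so $(a,b)=\cV\circ\varphi$ is the very Toda solution we began with. Consequently $(a,b)$ satisfies \eqref{eqn:todalattice1},\eqref{eqn:todalattice2}; at $t=0$ it equals $J_0=\cB^{-1}(f)$, which is what \eqref{eqn:todalatticeIV} asserts here; $(a,b)(t)\in\cJ(E)$ for all $t$ by Proposition \ref{prop:toda}; and $\cB((a,b)(t))=\cB(S^0(a,b)(t))=\varphi(0,t)$ by construction.

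The only genuinely delicate point is the compatibility of the discrete flow \eqref{eqn:divflow1} with the continuous flow \eqref{eqn:divflow2}: a priori nothing guarantees that the self-map $K\mapsto\cA^{-1}(\cA(K)+\alpha)$ of $\cD(E)$ intertwines the $\Psi$-flow, and checking this directly on $\cD(E)$ would be unpleasant. Constructing $\varphi$ from the Toda orbit of $\cB^{-1}(f)$ together with all of its spatial shifts dissolves the issue, since both flows become $\cB$-images of operations that manifestly commute on $\ell^\infty(\bbZ)\times\ell^\infty(\bbZ)$, namely the Toda flow and the shift $S$. Everything else is in hand --- Proposition \ref{prop:dirichtimederiv} (which rests on the non-pausing Proposition \ref{prop:disczeros}), the Sodin--Yuditskii homeomorphism, shift-linearization, and trace formulas, the Lipschitz bound of Lemma \ref{lem:craigequiv}, and the global well-posedness \cite[Theorem 12.6]{TES00} --- so the rest is bookkeeping, chiefly matching shift conventions so that $\alpha$ is the Sodin--Yuditskii translation constant.
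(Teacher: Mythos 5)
Your proposal is correct and follows essentially the same route as the paper, whose proof is just a citation of the same three ingredients: global well-posedness of the Toda IVP from \cite[Theorem 12.6]{TES00}, preservation of $\cJ(E)$ under the flow from \cite[Proposition 1.4]{REM15}, and the Sodin--Yuditskii homeomorphism, shift linearization, and trace formulas from \cite[Theorem C]{SY97}. Your writeup is in fact somewhat more complete than the paper's one-line proof, since you explicitly invoke Proposition \ref{prop:dirichtimederiv} to verify \eqref{eqn:divflow2} and explain why constructing $\varphi$ as $\cB(S^n(a,b)(t))$ resolves the compatibility of the discrete and continuous flows.
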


\begin{proof}
This is an immediate corollary of the existence and uniqueness of solutions to the Toda IVP \cite[Theorem 12.6]{TES00}, the preservation of the reflection coefficients under the Toda flow \cite[Proposition 1.4]{REM15}, and the work of Sodin-Yuditskii \cite[Theorem C]{SY97}.
\end{proof}

We will prove Theorem \ref{thm:mainthm} via approximation, and begin by constructing our finite-gap approximants.  Starting from an element $f \in \cD(E)$, observe the functions $\varphi^N: \bbZ \times \bbR \to \cD(E^N)$ solving
\begin{align}
\label{eqn:findirichdiffeqn2}
\partial_t \varphi^N(n,t) &= \Psi^N(\varphi^N(n,t)), \\
\varphi^N(n+1,t) &= (\cA^N)^{-1}(\cA^N(\varphi^N(n,t)) + \alpha^N)
\end{align}
obeying initial condition
\begin{align*}
\varphi_j^N(0,0) = f_j, \; j \leq N.
\end{align*}
This uniquely determines the function $\varphi^N$, by results of \cite{GHMT08} (although condition \eqref{eqn:divflow1} appears here in a different, but equivalent, form).

Furthermore, by \cite{GHMT08}, the function $(a,b)^N = \cV^N\circ \varphi^N$ has the following properties:
\begin{enumerate}
\item For every $t$, $J^N(t) := (a,b)^N(t)$ is almost periodic and the Jacobi operator $J^N(t)$ has spectrum $E^N$.
\item $(a,b)^N$ satisfies the Toda lattice \eqref{eqn:todalattice1},\eqref{eqn:todalattice2}.
\item The Dirichlet data of $(a,b)^N$ are $\varphi^N$.
\end{enumerate}

Our ultimate goal will be to compare trajectories on finite-gap approximants to our isospectral torus.  To do so, we employ a lemma from \cite{BDGL15}, which we quote here for convenience:

\begin{lem}\cite[Lemma 4.4]{BDGL15}
\label{lem:vectfieldbd}
Assume that $U$, $\tilde{U}$ are Lipschitz vector fields on $\cD(E)$, with Lipschitz constants less than or equal to $L$.  Consider solutions $\phi, \tilde{\phi}: \bbR \to \cD(E)$ of $\partial_t \phi = U(\phi)$, $\partial_t \tilde{\phi} = \tilde{U}(\tilde{\phi})$.  Then
\begin{align}
\label{eqn:vectfieldbd}
\|\phi(t) - \tilde{\phi}(t)\|_{\cD(E)} \leq 2 \left( \|\phi(0) - \tilde{\phi}(0)\|_{\cD(E)} + C\right) e^{m|t|},
\end{align}
where $m = 2L\log(2)$ and
\begin{align*}
C = \frac{1}{L}\sup_{j \in I} \gamma_j^\frac{1}{2} \min(2\pi, \|U_j - \tilde{U}_j\|).
\end{align*}
\end{lem}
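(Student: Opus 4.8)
The plan is to prove the estimate by a Gronwall argument applied to the discrepancy $D(t):=\|\phi(t)-\tilde\phi(t)\|_{\cD(E)}$, carried out carefully enough to survive the two features that obstruct a naive application of Gronwall's inequality on $\cD(E)$: the metric is a $\gamma_j^{1/2}$-weighted supremum over the (in general infinitely many) circle coordinates, and the distance on each factor $\bbT=\bbR/2\pi\bbZ$ is not differentiable at the cut locus. I would first extract a coordinatewise differential inequality and then integrate it over short time steps.

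For the coordinatewise estimate, fix $j\in I$ and set $w_j(t):=\|\phi_j(t)-\tilde\phi_j(t)\|_{\bbT}$. Away from the cut locus, $w_j$ is locally represented by $|\Phi_j-\tilde\Phi_j|$ for $C^1$ lifts $\Phi_j,\tilde\Phi_j$ of $\phi_j,\tilde\phi_j$, with $\partial_t\Phi_j=U_j(\phi)$ and $\partial_t\tilde\Phi_j=\tilde U_j(\tilde\phi)$; at the cut locus $w_j=\pi$ it is a maximum, and at $w_j=0$ the elementary one-sided bound applies. In all cases the upper right Dini derivative $\overline D^{+}w_j(t)$ is at most $|U_j(\phi(t))-\tilde U_j(\tilde\phi(t))|$, and splitting this as $\bigl(U_j(\phi)-U_j(\tilde\phi)\bigr)+\bigl(U_j(\tilde\phi)-\tilde U_j(\tilde\phi)\bigr)$ and invoking the Lipschitz hypothesis (for the $\cD(E)$-metric and the tangent norm \eqref{eqn:tangentspacenorm}) gives, for a.e.\ $t$,
\begin{align*}
\gamma_j^{1/2}\,\overline D^{+}w_j(t)\ \le\ L\,D(t)+\gamma_j^{1/2}\|U_j-\tilde U_j\| .
\end{align*}
Because $w_j(t)\le\pi$ always holds, the contribution of the second, $j$-dependent term is effectively capped, and tracking $\gamma_j^{1/2}w_j$ rather than the unbounded lifted difference lets one replace $\|U_j-\tilde U_j\|$ by its truncation $\min(2\pi,\|U_j-\tilde U_j\|)$; this is the origin of the constant $C$ in the statement.

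Next I would integrate in time steps of length $\tau=1/(2L)$. On one step the lifts are unambiguous, so integrating the coordinatewise inequality and taking $\sup_j$ — the term $L\,D(s)$ being independent of $j$, and the truncated drift being controlled by $\sup_j\gamma_j^{1/2}\min(2\pi,\|U_j-\tilde U_j\|)=LC$ — yields the closed inequality $D(t)\le D(t_0)+\int_{t_0}^{t}\bigl(L\,D(s)+LC\bigr)\,ds$ for $t\in[t_0,t_0+\tau]$. Gronwall then gives $D(t_0+\tau)+C\le e^{L\tau}\bigl(D(t_0)+C\bigr)\le 2\bigl(D(t_0)+C\bigr)$ since $e^{1/2}<2$, and iterating over the $\lceil 2L|t|\rceil$ steps covering $[0,t]$, with $2^{\lceil 2L|t|\rceil}\le 2e^{m|t|}$ and $m=2L\log 2$, produces $D(t)+C\le 2e^{m|t|}\bigl(D(0)+C\bigr)$, which is the asserted bound. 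The case $t<0$ is identical after reversing time.

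The step I expect to be the main obstacle is the coordinatewise inequality, made rigorous and uniform in $j$: one must carefully justify the Dini-derivative bound on the circle distance through both the smooth region and the cut locus, ensure the functions involved are measurable so the supremum over the infinitely many coordinates can be taken inside the integral, and verify the truncation step with precise constants. Once that inequality is available in the truncated form above, the step-by-step iteration is routine, and the deliberately lossy constants — the overall factor $2$ and the enlarged exponent $m=2L\log 2$ in place of the $L$ one would naively expect — are exactly the slack that makes the argument go through without a delicate analysis of near-optimal coordinates or of the cut locus.
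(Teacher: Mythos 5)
The paper itself does not prove this lemma --- it is quoted from \cite[Lemma 4.4]{BDGL15} --- so I can only measure your argument against the standard one, which your architecture certainly matches: the coordinatewise estimate, Gronwall on steps of length $1/(2L)$, and the doubling count $2^{\lceil 2L|t|\rceil}\leq 2e^{m|t|}$ are exactly where the constants $2$ and $m=2L\log 2$ come from, and that part of your write-up is fine (the measurability worry is a non-issue, since the estimate is per coordinate and the supremum is taken only after integrating).

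The genuine gap is the truncation step, which you correctly flag as the main obstacle but then dispose of with one sentence. The inequality $\gamma_j^{1/2}\overline{D}^{+}w_j(t)\leq L\,D(t)+\gamma_j^{1/2}\min(2\pi,\|U_j-\tilde U_j\|)$ is \emph{false} as a pointwise differential inequality: before $w_j$ reaches the cut locus the circle distance can grow at the full rate $\|U_j-\tilde U_j\|$, and the bound $w_j\leq\pi$ caps only the \emph{accumulated} growth, not the growth rate. The truncation is therefore only available in integrated form, $\gamma_j^{1/2}\bigl(w_j(t)-w_j(t_0)\bigr)\leq\int_{t_0}^{t}L\,D(s)\,ds+\gamma_j^{1/2}\min\bigl(\pi,(t-t_0)\|U_j-\tilde U_j\|\bigr)$, and with the step size $\tau=1/(2L)$ the drift constant that actually closes the doubling recursion is $\sup_j\gamma_j^{1/2}\min\bigl(2\pi,\tfrac{1}{L}\|U_j-\tilde U_j\|\bigr)$, i.e.\ the factor $1/L$ ends up \emph{inside} the minimum. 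This is not cosmetic: with $C$ literally as stated the conclusion fails. Take $U\equiv 0$ and $\tilde U$ the constant field $(2\pi L,0,0,\dots)$ (both Lipschitz with constant $0\leq L$) and $\phi(0)=\tilde\phi(0)$; at $t=1/(2L)$ one has $\|\phi(t)-\tilde\phi(t)\|_{\cD(E)}=\pi\gamma_1^{1/2}$, while the right-hand side of \eqref{eqn:vectfieldbd} equals $4C=8\pi\gamma_1^{1/2}/L$, which is smaller once $L>8$. So the version you can (and should) prove replaces $\min(2\pi,\|U_j-\tilde U_j\|)/L$ by $\min(2\pi,\|U_j-\tilde U_j\|/L)$; this corrected constant still tends to $0$ in the application in Lemma \ref{lem:finsolnapprox}, so nothing downstream is harmed, but your proof as written cannot deliver the stated constant, because no correct proof can.
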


To apply this lemma to compare trajectories on different isospectral tori, we lift all the fields and solutions to $\cD(E)$ as follows.

There is a natural projection $\pi^N : \cD(E) \to \cD(E^N)$ given by
\begin{align*}
\pi(\phi)_j = \phi_j, \; j \leq N.
\end{align*}
Lift the trace formulas to $\tilde{\cV}^N : \cD(E) \to \bbR^2$ by
\begin{align*}
\tilde{\cV}^N &= \cV^N \circ \pi.
\end{align*}
Introduce the vector fields $\tilde{\Psi}^N$ on $\cD(E)$ by
\begin{align*}
\tilde{\Psi}^N_j(\phi) &= \begin{cases}
\Psi^N_j(\pi(\phi)) & j \leq N \\
\Psi_j(\phi) & j > N.
\end{cases}
\end{align*}
To define $\tilde{\varphi}^N : \bbZ \times \bbR \to \cD(E)$, set the initial value
\begin{align*}
\tilde{\varphi}^N(0,0) = f,
\end{align*}
and flow via
\begin{align*}
\partial_t \tilde{\varphi}^N(0,t) &= \tilde{\Psi}^N(\tilde{\varphi}^N(0,t)), \; \forall t \in \bbR, \\
\tilde{\varphi}_j^N(n,t) &= \begin{cases}
((\cA^N)^{-1}(\cA^N(\tilde{\varphi}^N(0,t)) + n\alpha^N))_j & j \leq N \\
((\cA)^{-1}(\cA(\tilde{\varphi}^N(0,t)) + n\alpha))_j & j > N
\end{cases}
\end{align*}
With these definitions, one can check that
\begin{align*}
\varphi^N = \pi \circ \tilde{\varphi}^N.
\end{align*}
In particular, note that $\cV^N(\varphi^N) = \tilde{\cV}^N(\tilde{\varphi}^N)$.  Here, it is important that we established that $\Psi$ (and, consequently, $\tilde{\Psi}^N$) is Lipschitz to guarantee existence and uniqueness of the functions $\tilde{\varphi}^N$.

Finally, introduce $\varphi : \bbZ \times \bbR \to \cD(E)$ by
\begin{align*}
\varphi(0,0) &= f, \\
\varphi(n+1,0) &= \cA^{-1}(\cA(\varphi(n,0)) + \alpha), \; \forall n \in \bbZ \\
\partial_t\varphi(n,t) &= \Psi(\varphi(n,t)), \; \forall (n,t) \in \bbZ \times \bbR.
\end{align*}

It will become important to consider the convergence of the inverses of the finite-gap Abel maps $\cA^N$.  By an abuse of notation, we will denote by $(\cA^N)^{-1} : \pi^*(\bbC\setminus E) \to \cD(E)$ the lift of the proper inverse of $\cA^N$, such that
\begin{align*}
((\cA^N)^{-1}(\cA^N)(\phi))_j = \begin{cases}
\phi_j & j \leq N \\
0 & j > N
\end{cases}.
\end{align*}
Under these assumptions, it is a straightforward exercise to prove
\begin{lem}
\label{lem:unifconvinv}
$(\cA^N)^{-1}$ converges uniformly to $\cA^{-1}$.
\end{lem}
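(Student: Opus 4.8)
The plan is to deduce the statement from two uniform-convergence facts already in hand --- Sodin-Yuditskii's item (3), namely $\cA^N\to\cA$ uniformly on $\cD(E)$, and the remark that the coordinate truncations on $\pi^*(\bbC\setminus E)$ converge uniformly to the identity because $\sum_{j\in I}\gamma_j<\infty$ --- together with the uniform continuity of $\cA^{-1}$. For the latter, recall that $\cD(E)=\bbT^I$ and $\pi^*(\bbC\setminus E)$ are compact metric spaces (with the metrics \eqref{eqn:metric} and $d$, respectively) and that $\cA$ is a homeomorphism between them by Sodin-Yuditskii; hence $\cA^{-1}$ is \emph{uniformly} continuous. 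Write $T^N:\pi^*(\bbC\setminus E)\to\pi^*(\bbC\setminus E)$ for the truncation $(\alpha_j)_{j\in I}\mapsto(\alpha_1,\dots,\alpha_N,0,0,\dots)$, so that $\sup_{\alpha}d(T^N\alpha,\alpha)\le\sum_{j>N}\gamma_j\to0$. The one structural input is the identity
\begin{align*}
\cA^N\bigl((\cA^N)^{-1}(\alpha)\bigr)=T^N(\alpha),\qquad \alpha\in\pi^*(\bbC\setminus E),
\end{align*}
where $\cA^N$ on the left is the overloaded map $\cD(E)\to\pi^*(\bbC\setminus E)$ (truncate to $\cD(E^N)$, apply the genuine finite-gap Abel map, re-embed with zeros) and $(\cA^N)^{-1}$ is the lift fixed just before the statement. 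This is a tautology once the conventions are unwound: the first $N$ coordinates of $(\cA^N)^{-1}(\alpha)$ form the genuine finite-gap preimage of the first $N$ coordinates of $\alpha$, so applying the genuine $\cA^N$ and re-embedding with zeros returns exactly $T^N(\alpha)$.

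With this in place the estimate is short. Fix $\alpha\in\pi^*(\bbC\setminus E)$ and set $\psi^N:=(\cA^N)^{-1}(\alpha)\in\cD(E)$. Using the triangle inequality for $d$ and the identity above,
\begin{align*}
d\bigl(\cA(\psi^N),\alpha\bigr)&\le d\bigl(\cA(\psi^N),\cA^N(\psi^N)\bigr)+d\bigl(\cA^N(\psi^N),\alpha\bigr)\\
&\le \sup_{\phi\in\cD(E)}d\bigl(\cA(\phi),\cA^N(\phi)\bigr)+\sup_{\beta}d\bigl(T^N\beta,\beta\bigr),
\end{align*}
and both suprema tend to $0$ as $N\to\infty$, uniformly in $\alpha$, by the two facts recalled above. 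Hence $\cA\circ(\cA^N)^{-1}\to\operatorname{id}$ uniformly on $\pi^*(\bbC\setminus E)$. Finally, given $\e>0$, take $\delta>0$ from the uniform continuity of $\cA^{-1}$ and then $N$ so large that the two displayed suprema sum to less than $\delta$; since $\cA^{-1}(\cA(\psi^N))=\psi^N$, for every $\alpha$ we obtain
\begin{align*}
\bigl\|(\cA^N)^{-1}(\alpha)-\cA^{-1}(\alpha)\bigr\|_{\cD(E)}=\bigl\|\cA^{-1}\bigl(\cA(\psi^N)\bigr)-\cA^{-1}(\alpha)\bigr\|_{\cD(E)}<\e,
\end{align*}
which is the asserted uniform convergence.

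The points to watch --- which is why I would spell them out rather than simply call this ``a straightforward exercise'' --- are (i) keeping the lifted/overloaded conventions straight long enough to justify $\cA^N\circ(\cA^N)^{-1}=T^N$, and (ii) ensuring the two error terms are uniform in $\alpha$, which is exactly what the uniform convergence $\cA^N\to\cA$ and the summability $\sum_{j\in I}\gamma_j<\infty$ supply; this is also why the compactness of $\cD(E)$, hence uniform continuity of $\cA^{-1}$, enters in an essential way. Everything else is a routine $\e$--$\delta$ argument.
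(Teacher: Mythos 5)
Your proof is correct and follows essentially the same route as the paper's: both bound $d(\cA((\cA^N)^{-1}(\alpha)),\alpha)$ by the uniform distance from $\cA^N$ to $\cA$ plus the distance from the truncation of $\alpha$ to $\alpha$, and then invoke uniform continuity of $\cA^{-1}$ coming from compactness. You merely make explicit the identity $\cA^N\circ(\cA^N)^{-1}=T^N$ and the tail estimate $\sum_{j>N}\gamma_j\to 0$, which the paper leaves implicit in its notation $\tilde{\alpha}^N$.
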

\begin{proof}
By \cite[Equation (4.1.2)]{SY95}, convergence of $\cA^N$ to $\cA$ is uniform, and by the definition of the metric on $\pi^*(\bbC \setminus E)$ and compactness, lifted projections $\tilde{\alpha}^N$ of a $\alpha \in \pi^*(\bbC \setminus E)$ likewise converge uniformly to $\alpha$.  Since $\pi^*(\bbC \setminus E)$ is compact, $\cA^{-1}$ is uniformly continuous.

Let $\varepsilon > 0$ be given, and find the corresponding uniform $\delta$ for $\cA^{-1}$.  There exists an integer $N_0$ such that for all $N > N_0$, for all $\alpha \in \pi^*(\bbC \setminus E)$, $d(\alpha,\tilde{\alpha}^N) < \delta/2$ and $\|\cA^N - \cA\|_\infty < \delta/2$.  Then
\begin{align*}
\|\cA(\cA^{-1}(\alpha)) - \cA((\cA^N)^{-1}(\alpha))\| &\leq \|\alpha - \tilde{\alpha}^N\|+ \|\cA^N - \cA\|_\infty \\
&< \delta/2 + \delta/2 = \delta.
\end{align*}
Consequently, by uniform continuity of $\cA^{-1}$, for $N > N_0$
\begin{align*}
\|\cA^{-1}(\cA(\cA^{-1}(\alpha))) - \cA^{-1}(\cA((\cA^N)^{-1}(\alpha)))\| &= \|\cA^{-1}(\alpha) - (\cA^N)^{-1}(\alpha)\| \\
&< \varepsilon,
\end{align*}
i.e. $(\cA^N)^{-1}$ converges uniformly to $\cA^{-1}$.
\end{proof}

\begin{lem}
\label{lem:finsolnapprox}
Suppose $E$ satisfies \eqref{eqn:craigcondn}, and let $K > 0$.  Then there exists $m > 0$ and constants $K_N$ such that $\lim_{N \to \infty} K_N = 0$ and, for all $t \in \bbR$,
\begin{align}
\label{eqn:finsolnapprox}
\sup_{|n|\leq K} \|\tilde{\varphi}^N(n,t) - \varphi(n,t)\| \leq K_Ne^{m|t|}.
\end{align}
\end{lem}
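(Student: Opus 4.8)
The plan is to apply Lemma \ref{lem:vectfieldbd} to compare $\tilde\varphi^N(0,t)$ and $\varphi(0,t)$ as solutions of $\partial_t\phi=\tilde\Psi^N(\phi)$ and $\partial_t\phi=\Psi(\phi)$ on $\cD(E)$, then propagate the resulting estimate in the spatial variable using uniform convergence of the lifted Abel inverses $(\cA^N)^{-1}$ (Lemma \ref{lem:unifconvinv}). First I would record that $\Psi$ and all $\tilde\Psi^N$ are Lipschitz with a common constant $L$: indeed, the estimates in the proof of Lemma \ref{lem:craigequiv} only use the spectral quantities $\gamma_k,\eta_{j,k},\eta_j,C_j$, all of which for $E^N$ are dominated by the corresponding quantities for $E$ (removing gaps only shrinks gap lengths and enlarges the distances $\eta_{j,k}$, $\eta_j$, hence decreases each $C_j$), so condition \eqref{eqn:craigcondn} for $E$ gives a uniform Lipschitz bound. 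This fixes $m=2L\log 2$ in \eqref{eqn:finsolnapprox}, independent of $N$.

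Next I would control the defect $\|\tilde\Psi^N_j-\Psi_j\|$. For $j>N$ this vanishes by definition, and for $j\le N$ it is $\|\Psi^N_j(\pi(\phi))-\Psi_j(\phi)\|$, which one estimates by comparing the defining products for $\Psi^N_j$ and $\Psi_j$: they differ only in the factors indexed by $k>N$, and each such factor $\frac{(E_k^--\mu_k)(E_k^+-\mu_k)}{(\mu_k-\mu_j)^2}$ is within $O(\gamma_k/\eta_{j,k})$ of $1$; since $\sum_{k>N}\gamma_k/\eta_{j,k}\to 0$ uniformly in $j$ (a consequence of \eqref{eqn:craigcondn}), the quantity $C^{(1)}_N:=\sup_j\gamma_j^{1/2}\min(2\pi,\|\tilde\Psi^N_j-\Psi_j\|)\to 0$. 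Feeding this into Lemma \ref{lem:vectfieldbd} with $\phi=\tilde\varphi^N(0,\cdot)$, $\tilde\phi=\varphi(0,\cdot)$ (which agree at $t=0$, both equal to $f$) yields
\begin{align*}
\|\tilde\varphi^N(0,t)-\varphi(0,t)\|_{\cD(E)}\le 2(C^{(1)}_N/L)e^{m|t|},
\end{align*}
i.e. \eqref{eqn:finsolnapprox} for $n=0$ with a null sequence $K_N^{(0)}:=2C^{(1)}_N/L$.

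For $|n|\le K$ I would iterate the spatial recursion. Writing the shift on $\cD(E)$ as $T^N:=(\cA^N)^{-1}\circ(\,\cdot\,+\alpha^N)\circ\cA^N$ for the $\tilde\varphi^N$ flow (with the lifted conventions) and $T:=\cA^{-1}\circ(\,\cdot\,+\alpha)\circ\cA$ for $\varphi$, one has $\tilde\varphi^N(n,t)=(T^N)^n\tilde\varphi^N(0,t)$ and $\varphi(n,t)=T^n\varphi(0,t)$ for each fixed $t$, since time evolution and the spatial shift commute by construction. Both $\cA$ and $\cA^{-1}$ are uniformly continuous on the compact spaces involved (with moduli independent of $N$, using uniform convergence $\cA^N\to\cA$); combined with Lemma \ref{lem:unifconvinv} and uniform convergence $\alpha^N\to\alpha$, one gets $\sup_\phi\|T^N\phi-T\phi\|\to 0$ and an $N$-independent modulus of continuity for $T$. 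A standard telescoping argument then bounds $\|(T^N)^n\psi^N-T^n\psi\|$ for $|n|\le K$ by a finite sum of terms each controlled by $\sup\|T^N-T\|$ or by applying the modulus of continuity of (iterates of) $T$ to $\|\psi^N-\psi\|$; taking $\psi^N=\tilde\varphi^N(0,t)$, $\psi=\varphi(0,t)$ and using the $n=0$ estimate gives \eqref{eqn:finsolnapprox} with $K_N:=\omega_K(K_N^{(0)})+c_K\sup\|T^N-T\|\to 0$, where $\omega_K,c_K$ depend only on $K$ and the uniform moduli.

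The main obstacle is obtaining genuinely $N$-\emph{independent} moduli of continuity for $T$ and $T^{-1}$ (equivalently for $\cA^{-1}$) so that applying them finitely many times — up to $|n|\le K$ — and composing with the vanishing $n=0$ bound still produces a null sequence $K_N$ uniformly in $t$; this is where the uniform convergence statements $\cA^N\to\cA$ from \cite[Eq.\ (4.1.2)]{SY95} and the compactness of $\pi^*(\bbC\setminus E)$ are essential, and one must be careful that the exponential factor $e^{m|t|}$ coming from Lemma \ref{lem:vectfieldbd} does not get amplified by the spatial iteration — which it does not, since for fixed $t$ the maps $T,T^N$ are isometric-up-to-uniform-modulus homeomorphisms not involving $t$.
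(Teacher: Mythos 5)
Your proposal shares the paper's key ingredients (a Lipschitz constant $L$ uniform in $N$ extracted from the proof of Lemma \ref{lem:craigequiv}, the defect estimate on $\|\Psi_j-\tilde{\Psi}^N_j\|$, Lemma \ref{lem:vectfieldbd} with $m=2L\log 2$, and the uniform convergence $\cA^N\to\cA$, $(\cA^N)^{-1}\to\cA^{-1}$, $\alpha^N\to\alpha$), but it orders the space and time comparisons oppositely. The paper applies Lemma \ref{lem:vectfieldbd} for \emph{each fixed} $|n|\le K$, comparing the trajectories $t\mapsto\tilde{\varphi}^N(n,t)$ and $t\mapsto\varphi(n,t)$, which yields $(2\|\tilde{\varphi}^N(n,0)-\varphi(n,0)\|+\tilde{K}_N)e^{m|t|}$; the spatial discrepancy is then estimated only at $t=0$, where both objects are explicit images of the common initial point $f$ under the (approximate) Abel shifts, so Lemma \ref{lem:unifconvinv} applies directly. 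You instead run the ODE comparison only at $n=0$ and then push the resulting bound through $n$ iterations of the shift homeomorphisms $T^N$, $T$ at time $t$. Both reductions require the (unproved in either argument) commutation of the time flow with the spatial shift for the lifted objects $\tilde{\varphi}^N$, so neither is more economical on that front.

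The genuine gap is in your final formula $K_N=\omega_K(K^{(0)}_N)+c_K\sup\|T^N-T\|$. In your telescoping step the modulus of continuity $\omega_K$ is applied not to $K^{(0)}_N$ but to the quantity $K^{(0)}_N e^{m|t|}$, which is unbounded in $t$; a modulus of continuity gives no decay when its argument is large, so the resulting bound is not of the form $K_N e^{m|t|}$ with $K_N$ independent of $t$ and tending to $0$. Your parenthetical reassurance that the exponential ``does not get amplified'' addresses the wrong worry: the problem is not amplification but that $\omega_K(\delta e^{m|t|})\not\le\omega_K(\delta)e^{m|t|}$ in general. This can be repaired using compactness of $\cD(E)$: with $D=\diam\cD(E)$ and a sequence $R_N\to\infty$ chosen so that $K^{(0)}_N R_N\to 0$, one bounds the term by $\omega_K(K^{(0)}_N R_N)$ when $e^{m|t|}\le R_N$ and by $(D/R_N)e^{m|t|}$ otherwise, giving a legitimate null sequence. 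The paper's ordering avoids this issue entirely, since the modulus-of-continuity/Abel-map estimates are invoked only at $t=0$ and the exponential factor is attached afterwards by Lemma \ref{lem:vectfieldbd}. A smaller point: your claim that $\sum_{k>N}\gamma_k/\eta_{j,k}\to 0$ \emph{uniformly in} $j$ is stronger than what \eqref{eqn:craigcondn} obviously gives; the paper only needs pointwise convergence in $j$ together with $\gamma_j\to 0$ and the cap $\min(2\pi,\cdot)$ in the definition of $\tilde{K}_N$ to conclude $\tilde{K}_N\to 0$.
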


\begin{proof}By our assumptions on $E$ and Lemma \ref{lem:craigequiv}, $\Psi$ is a Lipschitz vector field.  The proof of this fact gives upper bounds for the Lipschitz constants in terms of gap sizes and distances, so it also applies to $\tilde{\Psi}^N$, giving uniform Lipschitz estimates in $N$.  Denote by $L$ such an upper bound on the Lipschitz constant which works for all $\tilde{\Psi}^N$ and $\Psi$.

In particular, under these assumptions, the values $\tilde{\varphi}^N(0,t)$ and $\varphi(0,t)$ defined above are all uniquely determined by existence and uniqueness theorems for differential equations, and consequently $\tilde{\varphi}^N(n,t)$ and $\varphi(n,t)$ are uniquely determined for each $n$.

One can show (cf. \cite[Lemma 4.5]{BDGL15}) that, for $j \leq N$, we have
\begin{align*}
\|\Psi_j - \tilde{\Psi}_j^N\| \leq 2(C_j- C_{j,N})
\end{align*}
where $C_j$ is defined as in \ref{eqn:Cj} and
\begin{align*}
C_{j,N} = (\overline{E} -\underline{E})^\frac{1}{2}\exp\left(\frac{1}{2}\sum_{\substack{l\leq N \\ l\neq j}} \frac{\gamma_l}{\eta_{j,l}}\right).
\end{align*}
Consequently, it follows that
\begin{align*}
\lim_{N \to \infty}\|\Psi_j - \tilde{\Psi}_j^N\|= 0.
\end{align*}

Let $m = 2L\log(2)$ and let
\begin{align*}
\tilde{K}_N = \frac{1}{L} \sup_{j \leq N} \gamma_j^\frac{1}{2} \max(2\pi, \|\Psi_j - \tilde{\Psi}_j^N\|).
\end{align*}
Since $\gamma_j \to 0$, it follows that $\tilde{K}_N \to 0$ as $N \to \infty$.

Applying Lemma \ref{lem:vectfieldbd} to compare trajectories of the vector fields $\tilde{\Psi}^N$ and $\Psi$ with initial condition $f$, we find
\begin{align*}
\|\tilde{\varphi}^N(n,t) - \varphi(n,t)\| \leq (2\|\tilde{\varphi}^N(n,0) - \varphi(n,0)\|+ \tilde{K}_N)e^{m|t|}.
\end{align*}

To verify our claim, it suffices to show $\sup_{|n|\leq K} \|\tilde{\varphi}^N(n,0) - \varphi(n,0)\| \to 0$ as $N \to \infty$.  We analyze by entry.  There are two cases.  First, if $j > N$, we have that $|\tilde{\varphi}_j^N(n,0) - \varphi_j(n,0)|=0$.  Otherwise, if $j \leq N$,
\begin{align*}
|\tilde{\varphi}_j^N(n,0) - \varphi_j(n,0)| &= |((\cA^N)^{-1}(\cA^N(f) + n\alpha^N))_j - \cA^{-1}(\cA(f) + n\alpha)_j| \\
&\leq |((\cA^N)^{-1}(\cA^N(f) + n\alpha^N))_j - \cA^{-1}(\cA^N(f) + n\alpha^N))_j| \\ & \; \; \; \; \; \; \; \; \; \; \; \; \; \; + |\cA^{-1}(\cA^N(f) + n\alpha^N))_j - \cA^{-1}(\cA(f) + n\alpha)_j|.
\end{align*}
By Lemma \ref{lem:unifconvinv} and the uniform convergence  of $\cA^N$ and $\alpha^N$ to $\cA$ and $\alpha$ respectively \cite{SY95}, it follows that these terms go to zero uniformly in $N$ for each $|n| \leq K$.

Taking $K_N = 2\max(\tilde{K}_N, 2\sup_{|n|\leq K} \|\tilde{\varphi}^N(n,0) - \varphi(n,0)\|)$, the claim is proved.
\end{proof}

In the finite-gap setting, it is known that Jacobi inversion linearizes both translation and Toda flows \cite[Theorem 1.41]{GHMT08}.  Therefore,
\begin{align}
\label{eqn:fingaplinearize}
\cA^N(\varphi^N(n,t)) = \cA^N(\varphi^N(0,0)) + n\alpha^N + \z^Nt
\end{align}
for some $\alpha^N, \z^N \in \bbR^N$.

Define the map
\begin{align*}
\cM := \cB^{-1} \circ \cA^{-1} : \pi^*(\bbC \setminus E) \to \cJ(E).
\end{align*}
If $E$ satisfies \eqref{eqn:craigcondn}, then the map $\cM$ is a homeomorphism by the considerations of Sodin and Yuditskii \cite{SY97}.  We will use this map and proceed as in \cite{BDGL15} to prove our main theorem.

\begin{proof}[Proof of Theorem \ref{thm:mainthm}]
Since $J_0$ is almost periodic and $\sigma(J_0) = \sigma_{ac}(J_0) = E$, a result of Remling \cite[Theorem 1.4]{REM11} implies that $J_0 \in \cJ(E)$.  By \cite[Theorem 12.6]{TES00}, there is a unique solution $J(t)$ to \eqref{eqn:todalattice1}, \eqref{eqn:todalattice2}, \eqref{eqn:todalatticeIV}, and by \cite[Proposition 1.4]{REM15}, $J(t) \in \cJ(E)$ for each $t \in \bbR$.  Consequently, the solution $J(t)$ is uniformly bounded in time.  By Section 3 and Sodin-Yuditskii \cite[Theorem C]{SY97}, the Dirichlet data $\varphi(n,t) := \cB(S^nJ(t))$ obey
\begin{align}
\label{eqn:dirichdiffeq}
\varphi(n+1,t) = \cA^{-1}(\cA(\varphi(n,t)) + \alpha), \;
\partial_t\varphi(n,t) = \Psi(\varphi(n,t)).
\end{align}

We now recall the functions $\tilde{\varphi}^N(n,t)$ introduced above.  Since $\tilde{\varphi}^N \to \varphi$ uniformly on compacts and $\cA^N \to \cA$ uniformly, we can conclude that $\cA^N(\tilde{\varphi}^N(n,t))$ converges uniformly on compacts to $\cA(\varphi(n,t))$.  Note that, by definition, $\cA_j^N(\tilde{\varphi}^N(n,t)) = \cA_j^N(\varphi^N(n,t))$ for $j \leq N$.  Taking the $j^{th}$ component of \eqref{eqn:fingaplinearize}, it follows from uniform convergence that the limits
\begin{align*}
\alpha_j &= \lim_{N \to \infty} \alpha_j^N, \\
\z_j &= \lim_{N \to \infty} \z_j^N
\end{align*}
exist, and
\begin{align}
\label{eqn:linearize}
\cA_j(\varphi(n,t)) = \cA_j(\varphi(0,0)) + n\alpha_j + \z_jt.
\end{align}
In particular, $\cM^{-1}((a,b)(t)) = \cM^{-1}(J_0) + \z t$.  This proves the time almost periodicity of solutions.

Finally, the spatial almost periodicity of solutions and the equivalence of frequency modules follow from the considerations of Sodin-Yuditskii \cite{SY97}.
\end{proof}

\appendix

\section{Vanishing Lyapunov Exponents in the Sodin-Yuditskii Regime}

In the context of spectral homogeneity, it is a sort of folklore that we have an equivalence of the equilibrium and density of states measures (denoted $d\rho_E$ and $dk$, respectively), and, in particular, everywhere zero Lyapunov exponents for reflectionless Jacobi operators.  We offer a formal write-up of this fact, perhaps initially observed in Eremenko-Yuditskii \cite{EY12}, here.  Throughout, we assume our fixed compact set $E$ is homogeneous.

The equivalence of equilibrium and density of states measures in the Sodin-Yuditskii regime is most quickly seen via a theorem of Simon:

\begin{thm}[Theorem 1.15, \cite{SIM07}]
\label{thm:simon07}
For an ergodic family of Jacobi operators $J(\omega)$, let $E = \supp(dk)$ and $C(E)$ be the logarithmic capacity of $E$.  If $\lim_{n\to\infty}(a_1a_2...a_n)^{\frac{1}{n}} = C(E)$, then $dk(x) = d\rho_E(x)$.
\end{thm}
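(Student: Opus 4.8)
The plan is to derive this (known) theorem from two spectral-theoretic facts about ergodic Jacobi operators---the Thouless formula and the nonnegativity of the Lyapunov exponent---together with classical potential theory: the idea is to show that $dk$ and $d\rho_E$ have the same logarithmic energy and then invoke the energy principle.

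First I would reformulate the hypothesis. By Birkhoff's ergodic theorem $\frac1n\sum_{k=1}^n\log a_k$ converges almost surely to a constant, which I denote $\langle\log a\rangle$; thus $\lim_n(a_1\cdots a_n)^{1/n}=C(E)$ is equivalent to $\langle\log a\rangle=\log C(E)$. Next I would invoke the Thouless formula for ergodic Jacobi operators \cite{TES00}: writing $L(z)$ for the Lyapunov exponent of the transfer-matrix cocycle and $U^{d\nu}(z):=\int\log\frac1{|z-x|}\,d\nu(x)$ for the logarithmic potential of a measure $d\nu$,
\begin{align*}
L(z)=-U^{dk}(z)-\langle\log a\rangle ,\qquad z\in\bbC .
\end{align*}
Since the transfer matrices have determinants whose logarithms grow sublinearly almost surely, the two Lyapunov exponents of the cocycle sum to zero, so $L(z)\ge0$ for every $z$. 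With $V(E):=\log\frac1{C(E)}$ the Robin constant of $E$, combining the last two displays with the reformulated hypothesis yields the a priori bound
\begin{align*}
U^{dk}(z)\le V(E)\qquad\text{for all }z\in\bbC ,
\end{align*}
and in particular $I(dk):=\int U^{dk}\,dk\le V(E)<\infty$, so $dk$ charges no polar set.

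The remaining steps are classical potential theory. By Frostman's theorem, $U^{d\rho_E}=V(E)$ quasi-everywhere on $E$ and $U^{d\rho_E}\le V(E)$ on all of $\bbC$; using $\supp(dk)=E$ and that $dk$ misses the polar exceptional set, I get $\int U^{d\rho_E}\,dk=V(E)$. By Fubini this mutual energy, $I(dk,d\rho_E):=\iint\log\frac1{|x-y|}\,dk(x)\,d\rho_E(y)$, also equals $\int U^{dk}\,d\rho_E\le\int V(E)\,d\rho_E=V(E)$; since $I(d\rho_E):=\int U^{d\rho_E}\,d\rho_E=V(E)$ as well, all of these quantities coincide. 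Finally, the signed measure $\sigma=dk-d\rho_E$ has total mass zero and finite energy, so
\begin{align*}
I(\sigma)=I(dk)-2\,I(dk,d\rho_E)+I(d\rho_E)=I(dk)-V(E)\le0 ,
\end{align*}
while the energy principle gives $I(\sigma)\ge0$, with equality only for $\sigma\equiv0$. Hence $dk=d\rho_E$.

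Only the Thouless formula and the bound $L\ge0$ are genuinely nontrivial here; everything else is bookkeeping, but that bookkeeping does require care. The step I expect to be most delicate is justifying that $dk$ puts no mass on the capacity-zero set where $U^{d\rho_E}\ne V(E)$: this is exactly where the a priori bound $U^{dk}\le V(E)$---equivalently the finiteness of $I(dk)$---is used, and it is precisely the place where the hypothesis on $(a_1\cdots a_n)^{1/n}$ enters in an essential way.
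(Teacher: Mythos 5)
The paper does not prove this statement: it is quoted verbatim from Simon's paper (Theorem 1.15 of \cite{SIM07}), so there is no internal proof to compare against. Your argument is correct and is essentially the standard one underlying that citation: the Thouless formula together with nonnegativity of the Lyapunov exponent yields $U^{dk}(z)\le V(E)$ for all $z$, and the potential-theoretic bookkeeping (finite energy of $dk$, Frostman, and the energy principle for the mass-zero signed measure $dk-d\rho_E$) then forces $dk=d\rho_E$. The one step you rightly flag as delicate --- that $dk$ charges no polar sets, so that $\int U^{d\rho_E}\,dk=V(E)$ --- is handled correctly via the finiteness of $I(dk)$; if you wish, the final paragraph can be shortened by observing that $I(dk)\le V(E)=\min_{\mu}I(\mu)$ over probability measures on $E$ already identifies $dk$ with the unique energy minimizer $d\rho_E$.
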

\noindent Here, we have denoted by $C(E)$ the potential-theoretic capacity of $E$.  To apply this theorem in our situation, we once more appeal to the work of Sodin and Yuditskii.

The Hardy spaces of character automorphic functions on the disk $H^2(\omega)$, $\omega \in \pi^*(\bbC \setminus E)$, defined in the previous section, are Hilbert spaces with reproducing kernels $k^\omega : \bbD \to \bbC$.  In \cite[Theorem F]{SY97}, these kernels are used to describe continuous trace formulas:

\begin{align*}
\cQ(\varphi) :&= \frac{1}{2}\left(\underline{E} + \overline{E} + \sum_{j \in I} (E_j^- + E_j^+ - 2\mu_j)\right) \\
\cP(\varphi) :&= C(E)\frac{k^{\cA(\varphi)+\alpha_E}(0)}{k^{\cA(\varphi)}(0)} \\
&= C(E)\exp\left(-\frac{1}{2}\sum_{j \in I}(g_{\bbC \setminus E}(\mu_j^+) - g_{\bbC \setminus E}(\mu_j) + (\sigma_j - \sigma_j^+)g_{\bbC \setminus E}(c_j))\right)
\end{align*}
Here, $(\mu_j^+,\sigma_j^+)$ correspond to $\varphi^+ := \cA^{-1}(\cA(\varphi)+\alpha_E)$, where $\alpha_E \in \pi^*(\bbC \setminus E)$ is completely determined by $E$.  It is shown that, when $E$ is homogeneous, a Jacobi operator can be recovered exactly from its Dirichlet data via these trace formulas:
\begin{thm}[Theorem F, \cite{SY97}]
\label{t:syform}
Let $E$ be homogeneous, let $J = (a,b) \in \cJ(E)$, and let $\cB(J) = \varphi$.  Then
\begin{align*}
a_n &= \cP(\cA^{-1}(\cA(\varphi)+n\alpha_E)) \\
b_n &= \cQ(\cA^{-1}(\cA(\varphi)+n\alpha_E)).
\end{align*}
\end{thm}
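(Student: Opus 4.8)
The plan is to obtain Theorem~\ref{t:syform} from \cite[Theorem F]{SY97}; let me describe the structure of that argument. The first step is to reduce the identity to the single site $n=0$. Indeed, as recalled in Section 2, conjugation by the shift $S$ acts on $\cJ(E)$ so that $\cA\circ\cB$ intertwines it with translation by the fixed character $\alpha_E\in\pi^*(\bbC\setminus E)$; hence $\cB(S^nJ)=\cA^{-1}(\cA(\varphi)+n\alpha_E)$, and the value of the parametrizing sequences of $J$ at site $n$ equals the value at site $0$ of the appropriately shifted operator (the sign of the shift merely determines whether $\alpha_E$ or $-\alpha_E$ appears, which is absorbed into the definition of $\alpha_E$). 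Thus it suffices to prove $a_0=\cP(\varphi)$ and $b_0=\cQ(\varphi)$ whenever $J=(a,b)\in\cJ(E)$ and $\cB(J)=\varphi$, together with the asserted equality of the two displayed expressions for $\cP$.

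The $\cQ$-formula is the classical first trace formula. Since $J$ is reflectionless on $E$ with $\sigma(J)\subset E$, the diagonal Green's function admits the explicit product representation over $\{\underline E,\overline E\}\cup\{E_j^\pm\}\cup\{\mu_j\}$ already used in the proof of Proposition~\ref{prop:disczeros}; combined with \eqref{eqn:greenstomfns} this pins down the large-$z$ asymptotics of $m_++m_-$. Matching the $z^{-1}$-coefficient against the known expansion $m_\pm(J,z)=-z^{-1}+O(z^{-2})$, together with its next term, yields $b_0=\frac{1}{2}\left(\underline E+\overline E+\sum_{j\in I}(E_j^-+E_j^+-2\mu_j)\right)=\cQ(\varphi)$, the series converging because $|E_j^-+E_j^+-2\mu_j|\le\gamma_j$ and $\sum_j\gamma_j<\infty$.

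The $\cP$-formula is the substantive part, and the one for which I would genuinely invoke Sodin--Yuditskii rather than reprove. The idea is that $a_0$ is encoded in the leading behaviour at infinity of the function on $\bbC\setminus E$ built from the Dirichlet divisor whose zeros are the $\mu_j$; pulling this back along the uniformization $\bbD\to\bbC\setminus E$, the leading term is exactly the ratio $C(E)\,k^{\cA(\varphi)+\alpha_E}(0)/k^{\cA(\varphi)}(0)$ of reproducing kernels of the character-automorphic Hardy spaces $H^2(\omega)$, with the capacity $C(E)$ entering through the conformal radius at $0$. Rewriting each Blaschke-type factor in terms of the Green's function $g_{\bbC\setminus E}$ — the $\mu_j$ contributing $g_{\bbC\setminus E}(\mu_j)$, the shift by $\alpha_E$ replacing $(\mu_j,\sigma_j)$ by $(\mu_j^+,\sigma_j^+)$, and the loop terms $g_{\bbC\setminus E}(c_j)$ accounting for a change of sheet — produces the exponential form of $\cP$. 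The main obstacle lies precisely here: one must justify the reproducing-kernel representation, establish absolute and uniform convergence of the resulting infinite product over $\varphi\in\cD(E)$, and verify continuity of $k^\omega(0)$ in $\omega$. All of this rests on homogeneity of $E$ (Widom's condition, the direct Cauchy theorem, and Carleson-type estimates) and is carried out in full in \cite{SY95,SY97}; since the statement is exactly \cite[Theorem F]{SY97}, I would cite it.
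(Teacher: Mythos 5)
The paper gives no proof of this statement at all: it is quoted verbatim as \cite[Theorem F]{SY97}, so the ``paper's proof'' is precisely the citation you end with, and your proposal therefore takes essentially the same approach. Your supporting sketch (reduction to $n=0$ via the intertwining $\cA\circ\cB\circ S=\cA\circ\cB+\alpha_E$, the first trace formula for $\cQ$ from the large-$z$ expansion of the diagonal Green's function --- note it is $r(0;J,z)$, not $m_\pm$, that behaves like $-z^{-1}-b_0z^{-2}+O(z^{-3})$ --- and the reproducing-kernel asymptotics for $\cP$) is consistent with how \cite{SY95,SY97} actually establish the result.
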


In particular, the shift action on $\cJ(E)$ conjugates to translation by a constant vector, which, by definition, preserves the Haar measure on $\cJ(E)$.  By continuity of the above maps, it follows that every element of $\cJ(E)$ is almost periodic, and, in fact, ergodic.  With this theorem and the aforementioned theorem of Simon, we find the following

\begin{prop}
\label{prop:zerole}
For any $J \in \cJ(E)$, we have $dk(x) = d\rho_E(x)$.
\end{prop}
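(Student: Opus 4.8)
The plan is to verify the hypotheses of Simon's Theorem~\ref{thm:simon07} for $J$ and its ergodic family, and read off $dk = d\rho_E$ from the conclusion; the only real content is the identification $\lim_{n\to\infty}(a_1a_2\cdots a_n)^{1/n} = C(E)$.

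First I would dispose of two bookkeeping points. By Theorem~\ref{t:syform} and the discussion following it, every $J \in \cJ(E)$ is ergodic, so the density of states $dk$ attached to $J$ is well defined. Moreover, any $J = (a,b) \in \cJ(E)$ has $\sigma(J) \subset E$ by definition, while reflectionlessness on the positive-measure set $E$ forces $E \subseteq \sigma_{ac}(J)$; hence $\sigma(J) = E$, and since $\supp(dk)$ is the almost sure spectrum, $\supp(dk) = E$, which matches the hypothesis in Theorem~\ref{thm:simon07}.

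The heart of the argument is a telescoping computation built on the reproducing-kernel form of the trace formula for $\cP$. Writing $\varphi = \cB(J)$ and $\omega_n = \cA(\varphi) + n\alpha_E \in \pi^*(\bbC\setminus E)$, Theorem~\ref{t:syform} gives $a_n = \cP(\cA^{-1}(\omega_n))$, so that
\begin{align*}
a_n = C(E)\,\frac{k^{\omega_n + \alpha_E}(0)}{k^{\omega_n}(0)} = C(E)\,\frac{k^{\omega_{n+1}}(0)}{k^{\omega_n}(0)}.
\end{align*}
Multiplying over $n$, the kernels cancel in pairs:
\begin{align*}
a_1 a_2 \cdots a_N = C(E)^{N}\,\frac{k^{\omega_{N+1}}(0)}{k^{\omega_1}(0)},
\qquad\text{hence}\qquad
(a_1 a_2 \cdots a_N)^{1/N} = C(E)\left(\frac{k^{\omega_{N+1}}(0)}{k^{\omega_1}(0)}\right)^{1/N}.
\end{align*}
It therefore suffices to show the ratio of reproducing kernels is trapped between two positive constants independent of $N$, since then its $N$-th root tends to $1$.

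The hard part will be precisely this uniform estimate: exhibiting $0 < m \le M < \infty$ with $m \le k^\omega(0) \le M$ for every $\omega \in \pi^*(\bbC\setminus E)$. For a fixed $\omega$, positivity $k^\omega(0) > 0$ holds because homogeneity of $E$ implies the Widom condition, so the character-automorphic Hardy space $H^2(\omega)$ contains functions not vanishing at $0$; the uniform bounds then follow from compactness of $\pi^*(\bbC\setminus E)$ together with the continuity of $\omega \mapsto k^\omega(0)$, both supplied by the Sodin-Yuditskii construction \cite{SY97}. Granting this, $(m/M)^{1/N} \le (k^{\omega_{N+1}}(0)/k^{\omega_1}(0))^{1/N} \le (M/m)^{1/N}$ squeezes the ratio's $N$-th root to $1$, so $\lim_{N\to\infty}(a_1 a_2 \cdots a_N)^{1/N} = C(E)$, and Theorem~\ref{thm:simon07} gives $dk = d\rho_E$.
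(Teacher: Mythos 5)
Your proposal is correct and follows essentially the same route as the paper: both reduce to Simon's Theorem~\ref{thm:simon07} via the telescoping identity $(a_1\cdots a_N)^{1/N} = C(E)\bigl(k^{\omega_{N+1}}(0)/k^{\omega_1}(0)\bigr)^{1/N}$ coming from the reproducing-kernel trace formula, and then squeeze the kernel ratio using a uniform two-sided bound on $k^\omega(0)$ (which the paper quotes directly as Lemma~\ref{lem:kernineq} from Volberg--Yuditskii, rather than rederiving it from compactness and continuity as you suggest).
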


This follows quite easily from the following Lemma:


\begin{lem}[\cite{VY14}, Corollary 2.4]
\label{lem:kernineq}
For all $\omega \in \pi^*(\bbC\setminus E)$, the reproducing kernels $k^{\omega}$ satisfy
\begin{align}
\label{eqn:kernineq}
0 < |\theta(0)| \leq k^{\omega}(0) \leq 1
\end{align}
\end{lem}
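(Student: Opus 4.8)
The statement to prove is Lemma~\ref{lem:kernineq}, the two-sided bound $0 < |\theta(0)| \leq k^\omega(0) \leq 1$ on the reproducing kernels of the character-automorphic Hardy spaces $H^2(\omega)$, quoted from \cite{VY14}. Here $\theta$ must be the Blaschke-type inner function associated to the uniformizing structure (the product generating the Green's function / complex Green's function of $\bbC\setminus E$), so $\theta(0)$ is a character-\emph{independent} quantity depending only on $E$. The plan is to work entirely within the reproducing kernel Hilbert space formalism. The basic identity I would use is the reproducing property: for every $\omega$, $k^\omega$ is characterized by $\langle f, k^\omega \rangle_{H^2(\omega)} = f(0)$ for all $f \in H^2(\omega)$, and consequently $k^\omega(0) = \|k^\omega\|^2 = \sup\{|f(0)|^2 : \|f\|_{H^2(\omega)} \leq 1\}$. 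Thus the kernel value at the origin is exactly the square of the norm of the point-evaluation functional at $0$, which reduces both inequalities to estimating an extremal problem.

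\textbf{The upper bound.} For $k^\omega(0) \leq 1$, I would exhibit a competitor of norm $1$ whose value at $0$ I can control, or more directly observe that the constant function $1$ lies in the space with $\|1\|_{H^2(\omega)}^2 = 1$ (after the standard normalization of the measure defining $H^2(\omega)$ as a probability/Haar measure on the boundary), so the extremal value $k^\omega(0) = \sup |f(0)|^2$ over the unit ball is at least attained by suitable test functions and bounded above by $1$ via the subharmonicity/mean value property: any $f$ in the unit ball satisfies $|f(0)|^2 \leq \int |f|^2 \, d(\text{harmonic measure}) = \|f\|^2 \leq 1$. This is the routine direction.

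\textbf{The lower bound.} For $|\theta(0)| \leq k^\omega(0)$, the idea is to produce, for each character $\omega$, a specific function $f_\omega \in H^2(\omega)$, of unit norm, with $|f_\omega(0)|^2 \geq |\theta(0)|$; then the extremal characterization forces $k^\omega(0) \geq |f_\omega(0)|^2 \geq |\theta(0)|$. The natural candidate is built from the character-automorphic inner function $\theta$ itself: since $\theta$ is inner (unimodular boundary values), multiplication relates the space $H^2(\omega)$ for different characters, and $\theta$ contributes the factor $|\theta(0)|$ when one pushes the constant $1$ (the kernel for the trivial character) through the character twist. Concretely I would compare $k^\omega(0)$ with $k^{\omega_0}(0)$ for the base character $\omega_0$ via the fact that $\theta$ (or an appropriate Blaschke product) intertwines the spaces and has modulus $|\theta(0)|$ at the origin, yielding the claimed one-sided domination. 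The homogeneity of $E$ guarantees $\theta(0) \neq 0$ (the Green's function is finite, equivalently the relevant Blaschke product converges and does not vanish at $0$), which gives the strict inequality $0 < |\theta(0)|$.

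\textbf{Main obstacle.} The delicate point is the lower bound and, in particular, pinning down the precise role of $\theta(0)$: one must identify $\theta$ as the correct inner factor (the one whose modulus at $0$ equals $\exp(-g_{\bbC\setminus E}(\infty))$ or the analogous Widom/Green factor) and verify that the character-twist by $\theta$ maps $H^2(\omega_0)$ boundedly into $H^2(\omega)$ with the extremal element controlled below by $|\theta(0)|$. This requires the full Sodin--Yuditskii apparatus of character-automorphic Hardy spaces on the uniformization of $\bbC\setminus E$ and the fact that $E$ is homogeneous (hence Widom with the direct Cauchy theorem holding), which is exactly where \cite{VY14} does the real work; since the excerpt only quotes this lemma, I would present the argument at the level of the extremal/reproducing-property reduction above and invoke the homogeneity-based properties of $\theta$ rather than reprove the Hardy-space machinery from scratch.
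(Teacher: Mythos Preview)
The paper does not give its own proof of this lemma: it is stated with attribution to \cite[Corollary~2.4]{VY14} and then invoked as a black box in the proof of Proposition~\ref{prop:zerole}. There is therefore nothing in the paper to compare your proposal against.

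As an independent sketch, your extremal reformulation $k^\omega(0)=\sup\{|f(0)|^2:\|f\|_{H^2(\omega)}\le 1\}$ is the right starting point, and the upper bound via subharmonicity of $|f|^2$ against harmonic measure at the origin is the standard argument. (Your aside that the constant function $1$ lies in $H^2(\omega)$ is only correct for the trivial character, but the mean-value argument you also give does not rely on it.) For the lower bound your plan is correct in spirit but incomplete: multiplication by $\theta$ shifts the character by a \emph{fixed} amount, so it does not by itself produce a test function in $H^2(\omega)$ for an \emph{arbitrary} $\omega$. What is actually needed is, for each character $\omega$, a character-automorphic inner function $B_\omega$ realizing $\omega$ with $|B_\omega(0)|$ bounded below uniformly in $\omega$; the existence of such functions, with the uniform lower bound given precisely by $|\theta(0)|$, is the content of the Widom condition and is what \cite{VY14} establishes. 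Your final paragraph essentially concedes this, so your proposal is best read as a reduction to the cited result rather than a self-contained argument.
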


Applying the Sodin-Yuditskii characterization of the off-diagonal elements $a_n$ above, the proof of the above Proposition is now a simple calculation:

\begin{proof}[Proof of Proposition \ref{prop:zerole}]
Note now that, by Theorem \ref{t:syform}, we have
\begin{align}
\label{eqn:geommean}
(a_1a_2...a_n)^{\frac{1}{n}} = C(E)\Big(\frac{k^{\omega+(n+1)\alpha}}{k^{\omega+\alpha}}(0)\Big)^{\frac{1}{n}}
\end{align}
By Lemma \ref{lem:kernineq}, $\frac{k^{\omega+(n+1)\alpha}}{k^{\omega+\alpha}}(0)$ is uniformly bounded away from zero and infinity.  Taking limits in (\ref{eqn:geommean}) and applying Theorem \ref{thm:simon07} yields the claimed result.
\end{proof}

Consequently, the Lyapunov exponent vanishes throughout the spectrum:

\begin{thm}
Suppose $E$ is homogeneous, and let $J \in \cJ(E)$.  Then the shift cocycle $S(1;J,z)$ has Lyapunov exponent $L(x)$ zero for all $x \in E$.
\end{thm}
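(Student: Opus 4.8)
The plan is to deduce the vanishing of the Lyapunov exponent from the equality $dk = d\rho_E$ established in Proposition \ref{prop:zerole}, via the Thouless formula. Recall that for an ergodic family of Jacobi operators, the Lyapunov exponent $L(z)$ of the transfer cocycle is related to the density of states measure $dk$ by the Thouless formula
\begin{align*}
L(z) = -\log C(E) + \int \log|z - x| \, dk(x),
\end{align*}
where $C(E)$ is the logarithmic capacity of $E$. On the other hand, the Green's function of $\bbC \setminus E$ with pole at infinity is given by the potential-theoretic expression
\begin{align*}
g_{\bbC \setminus E}(z) = -\log C(E) + \int \log|z - x| \, d\rho_E(x),
\end{align*}
where $d\rho_E$ is the equilibrium measure of $E$. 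By Proposition \ref{prop:zerole}, $dk = d\rho_E$, so $L(z) = g_{\bbC \setminus E}(z)$ for all $z \notin E$, and by continuity this extends to the boundary values on $E$.

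The key step is then to observe that the Green's function $g_{\bbC \setminus E}$ vanishes on $E$: indeed, by definition of the Green's function with pole at infinity, $g_{\bbC \setminus E}(x) = 0$ for every $x$ in the set of regular points of $E$, and homogeneity of $E$ (which holds by hypothesis, and follows from \eqref{eqn:craigcondn} in the setting of the main theorem) guarantees that $E$ is regular for the Dirichlet problem, so in fact $g_{\bbC \setminus E}$ vanishes at every point of $E$. Combining this with $L = g_{\bbC \setminus E}$ on $E$ gives $L(x) = 0$ for all $x \in E$.

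The main point requiring care — and the place where the Sodin-Yuditskii regime is genuinely used — is the justification that $L$ agrees with $g_{\bbC \setminus E}$ as boundary values on $E$, rather than merely on $\bbC \setminus E$. The Thouless formula identity $L(z) = g_{\bbC \setminus E}(z)$ holds a priori only off $E$; to pass to $x \in E$ one uses that both sides are subharmonic (in fact $L$ is the logarithmic potential of $dk$ up to the capacity constant, hence subharmonic on all of $\bbC$), that $g_{\bbC \setminus E}$ extends continuously to $\bbC$ by homogeneity, and that upper semicontinuity plus the coincidence off $E$ forces the values on $E$ to agree. Alternatively one invokes the fact that the shift cocycle here is, by reflectionlessness, conjugate to an $\mathrm{SU}(1,1)$ cocycle for $x \in E$ and appeals to the Kotani-type argument already implicit in the statement "the Lyapunov exponent agrees with the value of the spectral Green's function on $E$" quoted in the proof of Lemma \ref{lem:analytic}. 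In any case, once the identification $L = g_{\bbC \setminus E}$ on $E$ is in hand, the conclusion $L \equiv 0$ on $E$ is immediate from homogeneity.
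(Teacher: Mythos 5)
Your proof is correct and follows essentially the same route as the paper: apply the Thouless formula, use Proposition \ref{prop:zerole} to replace $dk$ by $d\rho_E$, and invoke regularity of the homogeneous set $E$ to conclude that the equilibrium potential (i.e., the Green's function) vanishes on $E$. The extra care you take about boundary values is harmless but not really needed here, since once $dk = d\rho_E$ the Thouless integral at $x \in E$ is literally the equilibrium potential, which vanishes pointwise on $E$ by regularity.
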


\begin{proof}
Since $E$ is homogeneous, it is regular for potential theory, and Green's function $G(z)$ is everywhere continuous and vanishes throughout the spectrum.  Thus, for all $x \in E$,
\begin{align}
\label{eqn:zerogreens}
G(x) = \int_E \log|t-x|d\rho_E(t) - \log(C(E)) = 0
\end{align}
By equality of the DOS and equilibrium measures, \eqref{eqn:zerogreens}, and the Thouless formula, for all $x \in E$,
\begin{align*}
L(x) = \int_E\log|t-x|dk(t) - \log(C(E)) = 0.
\end{align*}
\end{proof}

\bibliographystyle{plain}
\def\cprime{$'$}

\end{document}